\newtheorem{theorem}{Theorem}%[section]
\newtheorem{lemma}[theorem]{Lemma}
\newtheorem{corollary}[theorem]{Corollary}
\newtheorem{proposition}[theorem]{Proposition}
\newtheorem{remark}{Remark}[section]
\newcommand{\R}{\mathbb{R}}
\newcommand{\bpmat}{\begin{pmatrix}}
\newcommand{\epmat}{\end{pmatrix}}
\newcommand{\Symn}{\mathcal{S}^{n \times n}}
\newcommand{\dist}{\text{dist}}
\DeclareMathOperator*{\argmin}{argmin}
\newcommand{\Rmnum}[1]{\uppercase\expandafter{\romannumeral #1}} % Roman capital letter
\titleformat{\chapter}{\centering\Huge\bfseries}{Chapter \Rmnum{\thechapter} }{1em}{} % in book format
\title{ 
Improved Convergence Factor of Windowed Anderson Acceleration for Symmetric Fixed-Point Iterations
}
\author{ Casey Garner\thanks{School of Mathematics, University of Minnesota (\href{mailto:garne214@umn.edu}{garne214@umn.edu}, \href{mailto:lerman@umn.edu}{lerman@umn.edu}) }
\hspace{1cm}
Gilad Lerman\footnotemark[1]
\hspace{1cm}
Teng Zhang\thanks{Department of Mathematics, University of Central Florida (\href{mailto:Teng.Zhang@ucf.edu}{Teng.Zhang@ucf.edu})}}
\date{\today}
\begin{document}
\maketitle

\vspace{-0.2in}

\begin{abstract}
{
This paper studies the commonly utilized windowed Anderson acceleration (AA) algorithm for fixed-point methods, $x^{(k+1)}=q(x^{(k)})$. It provides the first proof that when the operator $q$ is linear and symmetric the windowed AA, which uses a sliding window of prior iterates, improves the root-linear convergence factor over the fixed-point iterations. When $q$ is nonlinear, yet has a symmetric Jacobian at a fixed point, a slightly modified AA algorithm is proved to have an analogous root-linear convergence factor improvement over fixed-point iterations. 
Simulations verify our observations. Furthermore, experiments with different data models demonstrate AA is significantly superior to the standard fixed-point methods for Tyler's M-estimation.

\vspace{3mm}
    \noindent\textbf{Keywords:} 
Anderson acceleration, fixed-point method, Tyler's M-estimation

\vspace{3mm}
    \noindent\textbf{MSC codes:} 
     65F10, 65H10, 68W40
%90C26, 15A16, 65K10, 68Q32
%90C26, 90C52, 65K10, 68W40
}

% 90C26 - Nonconvex programming, global optimization
% 90C52 - Methods of reduced gradient type 
% 65K10 - Numerical optimization and variational techniques
% 68W40 - Analysis of algorithms  
% 65F10 - Iterative numerical methods for linear systems
% 65H10 - Numerical computation of solutions to systems of equations
\end{abstract}

\section{Introduction}\label{sec:intro}
This paper investigates the convergence properties of the popular acceleration technique for fixed-point problems called Anderson acceleration (AA). The method, originally introduced by \cite{anderson1965} in the context of integral equations, utilizes a history of prior iterates to accelerate the classical fixed-point iteration, aka the Picard iteration \citep{picard1890memoire}. 
A brief history of the method, including its relationship to Pulay mixing, nonlinear GMRES, and quasi-Newton methods, can be found in the literature \citep{fang2009two,kelley2018numerical,potra2013characterization,pulay1980convergence}.
A renewed interest in AA came about following the work of \cite{walker2011anderson} where they demonstrated the effectiveness of AA in numerous applications such as nonnegative matrix factorization and domain decomposition. 
%Recently, AA has been employed in machine learning applications with some success \cite{geist2018anderson,shi2019regularized,sun2021damped}. 
Recently, AA, in various forms, has been studied in a myriad of settings and applied in numerous applications \citep{geist2018anderson,shi2019regularized,sun2021damped,zhang2020globally,brezinski2022shanks,bian2021anderson,bian2022anderson}. 

Despite a long history of use and strong recent interest, the accelerated convergence of AA is still not completely understood. 
The first mathematical convergence results for AA, for linear and nonlinear problems, were established by \cite{Toth2015}; however, they only proved AA did not worsen the convergence of the fixed-point iteration. Their theory did
not prove AA converged faster than the fixed-point method, as often witnessed in practice. Following Toth and Kelley, further efforts have shed more light on the convergence behavior of AA. In the papers by \cite{Evans2020}, \cite{pollock2019anderson}, and \cite{pollock2021anderson}, they proved AA improved convergence by studying the ``stage-k gain" of AA, which shows how much AA improves upon the fixed-point method at iteration $k$. Their analysis showed AA can surpass the fixed-point iteration by a factor $0\leq \theta_k \leq 1$ known as the gain at step $k$; however, 
as noted by De Sterck and He, ``it is not clear how $\theta_k$ may be evaluated or bounded in practice or how it may translate to an improved linear asymptotic convergence factor (\citeyear{doi:10.1137/21M1449579})." 
In this work, De Sterck and He note some intriguing properties of AA, such as how the linear asymptotic convergence factor depends on the initialization of the method. 
De Sterck, He, and Krzysik continuing this study related windowed AA to Krylov methods and developed new results on residual convergence bounds for linear fixed-point iterations (\citeyear{de2024anderson}); however, they note in their conclusion that bounding the asymptotic convergence factors remains an open and difficult question. In a different direction, \cite{rebholz2023effect} proved how AA affects the convergence rate of superlinearly and sublinearly converging fixed-point iterations.  

The aforementioned papers have added much to our understanding of the convergence of AA in the past decade, and this paper joins the conversation by establishing the improved root-linear convergence factor of AA over the fixed-point iteration {\color{black} in the case of iterations with symmetry}. To the best of our knowledge, this paper is the first to answer this open question and present a bound which clearly shows AA has a better asymptotic convergence factor than the fixed-point iteration.
Notably, our approach to proving  the claim is novel and does not utilize the notation of gain as defined in prior works, see \cite{Evans2020,pollock2019anderson,pollock2021anderson}.
Furthermore, our results only require common assumptions found in the literature. For example, we assume the Jacobian matrix of the operator ${q}$, for which we desire to compute fixed points, is symmetric at fixed points \citep{scieur2019generalized,NIPS2016_bbf94b34}.   
{\color{black} It is worth noting that symmetric Jacobian matrices often arise in fixed-point problems related to optimization. For example, minimizing $f(x)$  corresponds to the fixed-point problem $\nabla f(x)=0$. The Jacobian of $\nabla f(x)$ is the Hessian of $f$, which is naturally symmetric.}

It is also worth noting that positive asymptotic convergence results have been proven for other variants of AA. Recently, \cite{DeSterck2024} presented a characterization of the convergence for a restarted version of AA. 
Their results fully describe the behavior of a restarted AA iteration for solving $2\times 2$ linear systems. 
This paper contains similar results to an earlier work, \citep{Both2019}, where the authors proved a restarted version of AA has a better asymptotic convergence factor compared to the fixed-point method. 
Our work differs substantially because we do not study a restarted AA method but a windowed AA method, which uses a sliding window of prior iterates to compute the next iterate. We compare different AA variants in Section~\ref{sec:compare}.

\noindent
\textbf{Definitions and notation:} $\R^{n\times n}$ and $\mathcal{S}^{n\times n}$ denote the set of real $n$-by-$n$ and real symmetric $n$-by-$n$ matrices respectively. Given $A\in\R^{n\times n}$, its $\ell_2$ norm (or spectral/operator norm) is denoted $\|A\|$. 
%while its Frobenius norm is denoted $\|A\|_F$. 
The largest and the smallest eigenvalues of a matrix $A$ are denoted $\lambda_{\max}(A)$ and $\lambda_{\min}(A)$ respectively. 
Given the vector $x \in \R^n$, $\text{Diag}(x)$ denotes the diagonal matrix formed by $x$. 
For any three points $O, P, Q\in\R^n$, $\angle(OPQ)\in [0,\pi]$ represents the angle between the vectors $\vec{PO}$ and $\vec{PQ}$. For any sequence $\{x^{(k)}\}$ that converges to $x_*$, we define its  $r$-linear convergence factor to be  
\[
\rho_{\{x^{(k)}\}}=\lim\sup_{k\rightarrow\infty}\|x_*-x^{(k)}\|^{\frac{1}{k}}.
\]
Then, we say $\{x^{(k)}\}$ converges r-linearly with r-linear convergence factor $\rho_{\{x^{(k)}\}}$, and here the prefix ``r-'' stands for ``root''. 
%{\color{blue} If $\rho_{\{x^{(k)}\}} = 0$, then we say the sequence converges r-{\it super}linearly. Similarly, if $\rho_{\{x^{(k)}\}} = 1$ and the sequence $\{x^{(k)}\}$ that converges to $x_*$, then we say the convergence is r-{\it sub}linear.
%}

\noindent
\textbf{Organization of the rest of the paper:} 
Section~\ref{sec:AA} introduces the problem setting and defines the AA algorithm we study; 
Section~\ref{sec:main} states the main results of the paper with the key result being a proven upper bound on the root-linear convergence factor for AA when the Jacobian matrix is symmetric; Section~\ref{sec:simulations} presents numerical experiments for both linear and nonlinear operators and compares the performance of different types of AA variants; Section \ref{sec:proofs} contains the proofs of our results; the paper concludes in Section~\ref{sec:conclusion} with directions for future inquiry.

%==================================================================================
\section{Anderson Acceleration}\label{sec:AA}
This paper concerns the convergence of acceleration methods for computing fixed points of an operator ${q}:\R^n \rightarrow \R^n$, i.e, computing points $x_* \in \R^n$ such that 
$
x_*=q(x_*).
$
For this problem, the standard fixed-point method is the iterative procedure described by 
\begin{equation}\label{eq:fp1}
x^{(k+1)}=q(x^{(k)}).
\end{equation}
In this work, we study the windowed AA algorithm with depth $m$, AA(m), applied to the fixed-point iteration \eqref{eq:fp1}. 
A full description of AA(m) is given in Algorithm~\ref{alg:aa}. 
The key idea behind the method is to use a history of at most $m+1$ points to construct the next update at each iteration. 
A linear least-squares problem, \eqref{eq:aa_optimization}, is solved at each iteration to determine how to utilize the prior iterates to form the next update. %It is worth noting that any vector norm can be utilized in  \eqref{eq:aa_optimization}. %without changing the convergence theory for AA. 
As discussed in Section \ref{sec:intro}, AA(m) demonstrated significant effectiveness in many applications and this practical utility has motivated the resurgence of interest in the convergence analysis of this method. 
\begin{algorithm}[tb]
\caption{{{Windowed Anderson acceleration algorithm of depth $m$; AA(m)}
}}  
\label{alg:aa}
\begin{flushleft} 
  {\bf Input:}  Operator $q: \R^n\rightarrow\R^n$, initialization $x^{(0)}\in \R^n$, depth $m\in\{1,2,\hdots\}$\\
  {\bf Output:} An {\color{black}approximation of a} fixed point of $q$.\\
  {\bf Steps:}\\
  {\bf 1:} {\color{black}Set $x^{(1)}=q(x^{(0)})$ and $k=1$.}\\ 
  {\bf 2:} Let $m_k=\min(m,k)$. Solve the minimization problem 
\begin{equation}\label{eq:aa_optimization}
\min_{\sum_{j=k-m_k}^{k} \alpha_j^{(k)}=1}\Big\|\sum_{j=k-m_k}^{k}\alpha_j^{(k)}\big(q(x^{(j)})-x^{(j)}\big)\Big\|,
\end{equation}
and let 
\begin{equation}\label{eq:aa_update}
x^{(k+1)}=\sum_{j=k-m_k}^{k}\alpha_j^{(k)} q(x^{(j)}).\nonumber
\end{equation}
  \\
  {\bf 3:} {\color{black}Set $k=k+1$ and return to Step 2.}
  %Repeat Step 2 for $k\geq 2$. An est. of the fixed point is given by $\lim_{k\rightarrow\infty}{x^{(k)}}$.
  \end{flushleft} 
  \end{algorithm}

\section{Main Results}\label{sec:main}
This section presents our main results showcasing AA(m) has an improved r-linear convergence factor over the fixed-point iteration. Section~\ref{sec:linear} shows that in the case of linear and symmetric operators, AA(m) converges faster than the fixed-point iteration with {\color{black} an upper bound on the convergence factor} given in Theorem~\ref{thm:main1}. {\color{black}It also investigates the upper bound for special cases in Propositions~\ref{cor:special} and~\ref{cor:main}, although a detailed geometric understanding of the bound is deferred to Lemma~\ref{lemma:w0} in Section~\ref{sec:proofs}.} Section \ref{sec:reformulate_AA} describes a reformulation of AA(m), an idea that makes the proofs more elegant. 
Section~\ref{sec:nonlinear} shows when $q$ is nonlinear, yet has
a symmetric Jacobian at the solution, a
modified version of Algorithm~\ref{alg:aa} has an analogous root-linear convergence factor improvement over the fixed-point iteration when initialized near a solution.
Section~\ref{sec:discussion}  concludes with a discussion of our results. %The proofs of our main results are provided in Section~\ref{sec:Proofs}.

\subsection{Linear Symmetric Operators}\label{sec:linear}
We begin with the convergence of AA(m) for linear symmetric operators $q$, where $q(x)=Wx+a$ with
$W \in \mathcal{S}^{n\times n}$ and $-1< \lambda_{\min}(W)\leq \lambda_{\max}(W)< 1$. 
{\color{black}Theorem~\ref{thm:main1} establishes 
an upper bound on the $r$-linear convergence factor of AA(m), dependent upon  $\lambda_{\min}(W)$ and $\lambda_{\max}(W)$, and demonstrates a strict improvement over the convergence factor of the fixed-point iteration if $\lambda_{\max}(W) \neq -\lambda_{\min}(W)$.}
%Theorem~\ref{thm:main1} establishes the convergence rate of AA(m) over every two iterations based on $\lambda_{\min}(W)$ and $\lambda_{\max}(W)$ to provide an upper bound on the $r$-linear convergence factor of AA(m).

\begin{theorem}[Convergence factor of AA(m) for linear symmetric operators]\label{thm:main1}
Let $q(x) = {W}x + {a}$ with $W \in \mathcal{S}^{n\times n}$ and $\|{W}\|<1$. Then 
AA(m) converges $r$-linearly {\color{black}to the unique fixed point} of $q$ with {\color{black}a convergence factor bounded above} by $\sqrt{w_0 \|W\|}$ for any initialization,
where
{\color{black}
\begin{multline}\label{eq:w0}
w_0 = \sup_{\delta\geq 0}\;\;\sin\!\!\Bigg[\!
\sin^{-1}\!\!\Big(\!\frac{|\lambda_{\max}(W)-\lambda_{\min}(W)|\delta}{\sqrt{\!4+\!\big(2\!-\!\lambda_{\max}(\!W\!)\!-\!\lambda_{\min}(W\!)\big)^2\!\delta^2}}\!\Big)\! \\
+ \Big|\tan^{-1}(\delta)-\tan^{-1}\Big(\big(1-\frac{\lambda_{\max}(W)+\lambda_{\min}(W)}{2}\big)\delta\Big)\Big|\Bigg].
\end{multline}
}
%\begin{align}
%&w_0%\!  
%\label{eq:w0}
%=%\! 
%\\
%\nonumber
%&\sup_{a\geq 0}\;\;\sin\!\!\Bigg[\!
%\sin^{-1}\!\!\Big(\!\frac{|\lambda_{\max}(\bW)-\lambda_{\min}(\bW)|a}{\sqrt{\!4+\!\big(2\!-\!\lambda_{\max}(\!\bW\!)\!-\!\lambda_{\min}(\bW\!)\big)^2\!a^2}}\!\Big)\!\\ \nonumber &+
%\Big|\tan^{-1}(a)-\tan^{-1}\Big(\big(1-\frac{\lambda_{\max}(\bW)+\lambda_{\min}(\bW)}{2}\big)a\Big)\Big|\Bigg].
%\end{align}
 More specifically, if $\{x^{(k)}\}$ is the sequence generated by AA(m), then for all $k\geq 2$
 \begin{equation}\label{eq:twoiterations}
\frac{\|(W-I)(x^{(k+1)}-x_*)\|}{\|(W-I)(x^{(k-1)}-x_*)\|}\leq w_0 \|W\|.\end{equation}
Here $w_0\leq \|W\|$ 
%is bounded {\color{blue} above} by $\|W\|$ with 
with equality holds if and only if $\lambda_{\max}(W)= - \lambda_{\min}(W)$.
\end{theorem}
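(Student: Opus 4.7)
The plan is to reformulate AA in terms of the residuals, reduce the two-step contraction to a planar geometric problem, and bound the resulting angle using the spectrum of $\bW$.

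First, since $q(\bx) - \bx = (\bW - \bI)(\bx - \bx_*)$, I define the residual $\bm{u}^{(k)} := q(\bx^{(k)}) - \bx^{(k)} = (\bW - \bI)\be^{(k)}$ where $\be^{(k)} := \bx^{(k)} - \bx_*$. Because $\sum_j \alpha_j^{(k)} = 1$, the AA update satisfies $\be^{(k+1)} = \bW \tilde{\be}^{(k)}$ with $\tilde{\be}^{(k)} := \sum_j \alpha_j^{(k)} \be^{(j)}$, and therefore $\bm{u}^{(k+1)} = \bW \tilde{\bm{u}}^{(k)}$ where $\tilde{\bm{u}}^{(k)} := \sum_j \alpha_j^{(k)} \bm{u}^{(j)}$. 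The subproblem \eqref{eq:aa_optimization} is exactly $\min \|\tilde{\bm{u}}^{(k)}\|$ over the affine span $\cA_k$ of $\{\bm{u}^{(j)}\}_{j = k-m_k}^{k}$, so $\tilde{\bm{u}}^{(k)}$ is the orthogonal projection of the origin onto $\cA_k$.

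Since \eqref{eq:twoiterations} is exactly $\|\bm{u}^{(k+1)}\| \leq w_0 \|\bW\|\,\|\bm{u}^{(k-1)}\|$, the spectral bound $\|\bm{u}^{(k+1)}\| \leq \|\bW\|\,\|\tilde{\bm{u}}^{(k)}\|$ reduces the task to showing $\|\tilde{\bm{u}}^{(k)}\| \leq w_0 \|\bm{u}^{(k-1)}\|$. Because $\cA_k$ always contains the line through $\bm{u}^{(k-1)}$ and $\bm{u}^{(k)}$, the foot-of-perpendicular formula gives $\|\tilde{\bm{u}}^{(k)}\| \leq \|\bm{u}^{(k-1)}\| \sin \theta_k$ with $\theta_k := \angle O \bm{u}^{(k-1)} \bm{u}^{(k)}$, where $O$ is the origin. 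So the whole theorem reduces to the single geometric estimate $\sin \theta_k \leq w_0$.

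The angle estimate is the main obstacle. I would exploit the previous step's optimality: $\tilde{\bm{u}}^{(k-1)}$ is orthogonal to the direction subspace of $\cA_{k-1}$, so writing $\bm{u}^{(k-1)} = \tilde{\bm{u}}^{(k-1)} + \bc$ forces $\bc \perp \tilde{\bm{u}}^{(k-1)}$. Combined with $\bm{u}^{(k)} = \bW \tilde{\bm{u}}^{(k-1)}$, this yields
\[
\bm{u}^{(k)} - \bm{u}^{(k-1)} = (\bW - \bI)\tilde{\bm{u}}^{(k-1)} - \bc,
\]
so $\theta_k$ is the angle between $-\tilde{\bm{u}}^{(k-1)} - \bc$ and $(\bW - \bI)\tilde{\bm{u}}^{(k-1)} - \bc$, and it lives in the low-dimensional subspace spanned by $\tilde{\bm{u}}^{(k-1)}$, $\bW \tilde{\bm{u}}^{(k-1)}$, and $\bc$. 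I would split $\theta_k$ into two pieces: the angle between $-\tilde{\bm{u}}^{(k-1)}$ and $(\bW - \bI)\tilde{\bm{u}}^{(k-1)}$, which depends only on the spectrum of $\bW$ restricted to $\tilde{\bm{u}}^{(k-1)}$ and accounts for the $|\tan^{-1}(a) - \tan^{-1}((1 - \tfrac{1}{2}(\lambda_{\max} + \lambda_{\min}))a)|$ term in \eqref{eq:w0}; and a perturbation angle generated by the $\bc$-corrections, controlled through the spectrum of $\bW - \bI$ and producing the $\sin^{-1}$ term, with the free parameter $a \geq 0$ corresponding to a scale-invariant length ratio (roughly $\|\bc\|/\|\tilde{\bm{u}}^{(k-1)}\|$ up to spectral factors). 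The hardest part will be the careful two-dimensional trigonometry showing that the worst case reduces to an eigenplane of $\bW$ spanned by eigenvectors for $\lambda_{\min}(\bW)$ and $\lambda_{\max}(\bW)$, yielding the supremum over $a$. Once \eqref{eq:twoiterations} is in place, pairing steps and taking $k$-th roots delivers the r-linear factor $\sqrt{w_0 \|\bW\|}$, and the inequality $w_0 \leq \|\bW\|$ with equality iff $\lambda_{\max}(\bW) = -\lambda_{\min}(\bW)$ follows by examining \eqref{eq:w0} at the extreme eigenvalue configurations.
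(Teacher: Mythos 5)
Your reduction is exactly the paper's: the residual reformulation $\bm{u}^{(k+1)}=\bW\tilde{\bm{u}}^{(k)}$, the bound $\|\bm{u}^{(k+1)}\|\leq\|\bW\|\,\|\tilde{\bm{u}}^{(k)}\|$, the observation that the affine hull contains the line through $\bm{u}^{(k-1)}$ and $\bm{u}^{(k)}$ so that everything reduces to $\sin\theta_k\leq w_0$, and the orthogonality $\bc\perp\tilde{\bm{u}}^{(k-1)}$ inherited from the previous projection step. All of that is correct and matches the paper's argument step for step. The gap is that the one estimate carrying the actual content of the theorem --- $\sin\theta_k\leq w_0$ with $w_0$ given exactly by \eqref{eq:w0} --- is only sketched, and the sketch as written would not assemble into \eqref{eq:w0}. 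You attribute the $\tan^{-1}$-difference term to the ``unperturbed'' angle between $-\tilde{\bm{u}}^{(k-1)}$ and $(\bW-\bI)\tilde{\bm{u}}^{(k-1)}$ and the $\sin^{-1}$ term to the $\bc$-perturbation; in fact the split goes the other way around. The missing ingredient is the elementary containment (Lemma~\ref{lemma:bound} in the paper) that for symmetric $\bW$, the point $\bW\bx$ lies in the ball centered at $\tfrac{1}{2}(\lambda_{\max}(\bW)+\lambda_{\min}(\bW))\bx$ of radius $\tfrac{1}{2}(\lambda_{\max}(\bW)-\lambda_{\min}(\bW))\|\bx\|$, which follows from $\|\bW-\tfrac{1}{2}(\lambda_{\max}+\lambda_{\min})\bI\|=\tfrac{1}{2}(\lambda_{\max}-\lambda_{\min})$. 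With $P=\bm{u}^{(k-1)}$, $R=\tilde{\bm{u}}^{(k-1)}$ (normalized to unit length), $O$ the origin and $C$ the ball center, one bounds $\angle OPQ\leq\angle OPC+\angle CPQ$: the $\tan^{-1}$ difference is $\angle OPC$ (it vanishes both when $\bc=\bm{0}$ and when $\lambda_{\max}+\lambda_{\min}=0$, i.e.\ when $C=O$), while the $\sin^{-1}$ term is the half-angle $\angle CPQ$ subtended by the ball at $P$, maximized at the tangent configuration. The parameter is $a=\|\tilde{\bm{u}}^{(k-1)}\|/\|\bc\|$, the reciprocal of the ratio you propose. Note also that no reduction to an eigenplane of $\bW$ is needed: the ball containment already makes the problem two-dimensional (the plane through $O$, $P$, and the tangent point $Q$), which is cleaner than arguing the worst case lives in the span of extreme eigenvectors.

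Two smaller points. First, your closing claims are stated but not argued: the chaining of \eqref{eq:twoiterations} to the factor $\sqrt{w_0\|\bW\|}$ is routine, but the assertion that $w_0\leq\|\bW\|$ with equality iff $\lambda_{\max}(\bW)=-\lambda_{\min}(\bW)$ does not follow by mere inspection of \eqref{eq:w0}; the paper proves it by writing $\sin\angle OPQ=\dist(O,L)/|OP|$ with $|OP|\geq|OR|=1$ and $\dist(O,L)\leq|OC|+|CQ|=\|\bW\|$, and then checking when both inequalities can be simultaneously tight. Second, the feasible set in \eqref{eq:aa_optimization} is the affine hull of the residuals, as you say, not a linear subspace; your phrasing is actually the more careful one, and the projection inequality you use (distance to the affine hull is at most distance to any line it contains) is what makes the argument go through.
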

{\color{black}Theorem~\ref{thm:main1} implies that the $r$-linear convergence factor of AA(m) is bounded above by  $\sqrt{w_0\|W\|}$, which is less than or equal to $\|W\|$, the r-linear convergence factor of the fixed-point iteration. Furthermore, the theorem indicates that when  
$\lambda_{\max}(W)\neq -\lambda_{\min}(W)$, the convergence factor of AA(m) is strictly smaller.

Although the expression for $w_0$ in \eqref{eq:w0} is complicated, we note an equivalent geometric interpretation of $w_0$ will be presented in Lemma~\ref{lemma:w0} and visualized in Figure~\ref{fig:visualization_proof2}. Additionally, the following proposition shows that in the special case where $W$ is a scalar matrix, i.e., $W= \|W\|I$, $w_0$ has a simple, explicit description which depends on $\|W\|$.
}
% \begin{proposition}[A special case of a tight bound] \label{cor:special}
% If $W$ is a scalar matrix {\color{blue}with $\|W\| <1$}
% and AA(1) is initialized in a particular fashion, with $x_0$ and $x_1$ independently initialized, then \eqref{eq:twoiterations} is an equality {\color{blue}for all iterations} and $w_0=\|W\|/\sqrt{2-\|W\|}$. 
% \end{proposition}

{\color{black}
\begin{proposition}[Explicit expression of $w_0$ for a special case] \label{cor:special}
If $W$ is a scalar matrix with $\|W\|<1$, then $w_0$ defined in \eqref{eq:w0} can be concisely expressed as $w_0=\|W\|/\sqrt{2-\|W\|}$. 
\end{proposition}
}
We extend {\color{black}Theorem~\ref{thm:main1}} by showing if $m\geq 2$, 
the convergence factor of AA(m) is always strictly better than the {\color{black} convergence factor} of the fixed-point method.
\begin{proposition}
[Improved $r$-linear convergence factor]
\label{cor:main}%(a) 
Assume $W \in \Symn$ {\color{black}with $\|W\|<1$}. If either $m\geq 2$ or $\lambda_{\max}(W)\neq -\lambda_{\min}(W)$, then 
 the
r-linear convergence factor of AA(m) is strictly smaller than $\|W\|$, which is the r-linear convergence factor of the fixed-point iteration. 
%(b) 
If on the other hand  
$m =1 $ and $\lambda_{\max}(W)=-\lambda_{\min}(W)$, then the convergence factor of AA(m) may be $\|W\|$ for {\color{black}certain  matrices $W$ and specific initial estimates  $x^{(0)}$}. 
%\color{blue}{for some matrices $W$ under certain initial conditions
\end{proposition}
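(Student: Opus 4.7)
The plan splits along the two assertions. For the first (strict improvement), the case $\lambda_{\max}(\bW) \neq -\lambda_{\min}(\bW)$ is immediate from Theorem~\ref{thm:main1}: that theorem states $w_0 < \|\bW\|$ in this case, so the convergence bound $\sqrt{w_0\|\bW\|}$ is already strictly below $\|\bW\|$. The substantive case is $m \geq 2$ together with $\lambda_{\max}(\bW) = -\lambda_{\min}(\bW)$, where Theorem~\ref{thm:main1} gives only $\sqrt{w_0\|\bW\|} = \|\bW\|$. To handle it, I would reproduce the geometric argument of Theorem~\ref{thm:main1} on three consecutive iterates $\bx^{(k-2)}, \bx^{(k-1)}, \bx^{(k)}$, taking advantage of the extra coefficient $\alpha_{k-2}$ that AA($m$) with $m \geq 2$ provides in \eqref{eq:aa_optimization}. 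The resulting constrained optimization has one additional free parameter relative to the two-iterate analysis, and the analog of the point $P$ in Figure~\ref{fig:w0} can be prevented from approaching the extremal limit $a \to \infty$ that forces $w_0 = \|\bW\|$. One then expects a per-three-iterations bound of the form $\|(\bW-\bI)(\bx^{(k+1)}-\bx_*)\| \leq w_0'\,\|\bW\|^2\, \|(\bW-\bI)(\bx^{(k-2)}-\bx_*)\|$ with $w_0' < \|\bW\|$ unconditionally, giving an r-linear rate at most $(w_0'\|\bW\|^2)^{1/3} < \|\bW\|$.

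For the second assertion, I construct an explicit example showing the bound $\|\bW\|$ is unimprovable when $m=1$ and $\lambda_{\max}(\bW) = -\lambda_{\min}(\bW)$. Take $\bW = \diag(w,-w)$ with $0 < w < 1$ and initialize $\bx^{(0)} = \bx_* + (1,c)$ with $c = ((1-w)/(1+w))^{3/2}$, so that $\bx^{(1)} - \bx_* = \bW(1,c) = (w,-wc)$. The KKT condition for the AA(1) subproblem \eqref{eq:aa_optimization} at the first step reduces, after a few lines of algebra, to the identity $(1-w)^3 = (1+w)^3 c^2$, which is exactly the defining relation for our choice of $c$. Hence the minimizer is $(\alpha_0,\alpha_1) = (0,1)$, meaning AA(1) collapses to a plain fixed-point step and yields $\bx^{(2)} - \bx_* = (w^2, w^2 c)$. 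Because the error pattern $\bx^{(k)} - \bx_* = (w^k, (-w)^k c)$ is preserved, the same calculation at every subsequent step continues to produce a fixed-point step; by induction the r-linear rate of AA(1) is exactly $w = \|\bW\|$.

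The main obstacle is the three-iterate refinement of Theorem~\ref{thm:main1}. One must carefully generalize the angle estimate to a configuration with an extra free parameter, verify the corresponding supremum $w_0'$ is strictly less than $\|\bW\|$ in all cases (including $\lambda_{\max}(\bW) = -\lambda_{\min}(\bW)$), and check that the additional coefficient in AA($m$) for $m \geq 2$ is genuinely used, i.e., cannot degenerate into an AA(1) step that would reproduce the tight construction in the second assertion.
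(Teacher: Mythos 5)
Your handling of the easy case ($\lambda_{\max}(\bW)\neq-\lambda_{\min}(\bW)$, immediate from Theorem~\ref{thm:main1}) and of the second assertion matches the paper. In fact your counterexample is the paper's counterexample up to reparametrization: the paper takes $\bW=\Diag(w,-w)$ and imposes $(1-w)u^2=(1+w)v^2$ on the residual $\tilde{\bx}^{(0)}=(\bW-\bI)\bx^{(0)}=[u,v]$, and writing $\bx^{(0)}=(1,c)$ this is exactly your relation $(1-w)^3=(1+w)^3c^2$; both show AA(1) degenerates to the fixed-point step at every iteration.

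The gap is in the substantive case $m\geq 2$, $\lambda_{\max}(\bW)=-\lambda_{\min}(\bW)$. You correctly identify the target (a three-step bound strictly better than $\|\bW\|^3$), but your proposed route --- redo the geometric supremum over a three-iterate configuration and argue that ``$P$ can be prevented from approaching $a\to\infty$'' --- is not substantiated, and nothing in a \emph{single} step prevents that limit: the two-step bound is near-tight precisely when $\tilde{\by}^{(k)}\approx\tilde{\bx}^{(k-1)}$ (i.e.\ $a\to\infty$) and $\tilde{\by}^{(k)}$ nearly lies in the $\pm\|\bW\|$ eigenspaces, and this can certainly occur at an individual step. The mechanism the paper uses is a rigidity argument across \emph{consecutive} steps: if $\|\tilde{\bx}^{(k+2)}\|=\|\bW\|^3\|\tilde{\bx}^{(k-1)}\|$, then tightness at $i=0,1,2$ forces $\tilde{\by}^{(k+i)}=\tilde{\bx}^{(k-1+i)}$ and $\tilde{\by}^{(k)}=x_1\bv_1+x_2\bv_2$ in the span of the $\pm\|\bW\|$ eigenvectors, whence $\tilde{\bx}^{(k+1)}=\|\bW\|^2\tilde{\bx}^{(k-1)}$; since $m\geq2$, the affine span used at step $k+2$ contains both $\tilde{\bx}^{(k-1)}$ and its positive multiple $\tilde{\bx}^{(k+1)}$, hence contains the origin, so $\tilde{\by}^{(k+2)}=\bm{0}$ and AA terminates exactly --- a contradiction. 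This collinearity-through-the-origin observation is the key idea your plan is missing, and it is also where the hypothesis $m\geq2$ actually enters. Separately, you assert a uniform $w_0'<\|\bW\|$ ``unconditionally'' but give no argument for why the per-triple strict inequality is bounded away from $1$ uniformly in $k$ (needed to conclude anything about the $\limsup$ defining the $r$-linear factor); the paper devotes a second, compactness-flavored contradiction argument to exactly this upgrade, quantifying how near-tightness forces $\tilde{\by}^{(k+i)}$ close to $\tilde{\bx}^{(k-1+i)}$ and close to the extremal eigenspaces, and then reruns the collinearity argument approximately.
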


%================================================================================
\subsection{A Helpful Reformulation of Anderson Acceleration}
\label{sec:reformulate_AA}

%we must first  
%In order to present our 
%The proofs of the above results require intricate geometric estimates.??  Nevertheless, they become simpler by a reformulation of AA(m). 
%we present in this section. 
{\color{black}
For our analysis it is beneficial to examine an equivalent reformulation of AA(m). 
In order to present our reformulation, we utilize the following result about two particular sequences generated by AA(m).
%To present our equivalent reformulation of AA(m), we first prove a relationship between two sequences generated by AA(m). 
\begin{theorem}\label{thm:AA_invariance}
Let $f:\R^n \rightarrow \R^n$, $m \in \{1,2,\hdots\}$, and $b, x^{(0)} \in \R^n$.
Define the operators $q_1(x) := f(x) + x$ and $q_{2}(x):= f(x-b)+x$ and 
apply AA(m) on $q_1$ initialized at $x^{(0)}$ to generate the sequence $\{x^{(k)}_1\}$ and $q_{2}$ initialized at $x^{(0)} +b$ to generate the sequence $\{x^{(k)}_{2}\}$. 
Then the sequences 
$\{x^{(k)}_1\}$ and $\{x^{(k)}_{2}\}$ are identical up to a shift by $b$, that is, $x^{(k)}_{2} = x^{(k)}_{1} + b$ for all $k \geq 1$. 
\end{theorem}

Theorem~\ref{thm:AA_invariance} is proven in Section~\ref{sec:proofs}. 
A key consequence of it is that the sequences $\{x_1^{(k)}\}$ and $\{x_2^{(k)}\}$ exhibit identical convergence behavior. Specifically, if one sequence converges, so does the other, with identical convergence rates and convergence factors. Moreover, when $q(x)$ is a linear function, Theorem~\ref{thm:AA_invariance} 
enables a useful simplification that we leverage in our proof. 
%allows us to further simplify the 
%An important consequence of this result is a simplification of the linear setting which simplifies our analysis. 
\begin{corollary}\label{cor:simplify_linear}
Let $W\in \Symn$ with $\|W\|<1$ and $a \in \R^n$. If AA(m) is applied to $q_1(x) = Wx$ and $q_2(x) = Wx+a$ as described in Theorem~\ref{thm:AA_invariance}, then the generated sequences are the same up to a shift. In addition,  $q_1$ has a unique fixed point at $x = 0$. 
\end{corollary}
\begin{proof}
Define $Q:=W-I$, $b:=-Q^{-1}a$ ($Q$ must be invertible because $\|W\| <1$), and $f(x) := Qx$,  
then $q_1(x) = f(x)+x$ and $q_2(x) =f(x-b) + x$. Corollary~\ref{cor:simplify_linear} then follows by 
Theorem~\ref{thm:AA_invariance}. 
%applying AA(m) to $q_1$ initialized at $x_0\in \R^n$ and to $q_2$ initialized at $x_0 + b$ generates two sequences which only differ by $b$.
In addition, $x = 0$ is the unique fixed-point of $q_1$ because $Q$ is invertible.
\end{proof}
Thus, by Corollary~\ref{cor:simplify_linear}, to analyze the convergence of AA(m) in the linear setting, we may assume without loss of generality that ${a}={0}$, $q(x)=Wx$, and ${x}_* = {0}$.

Second, we present our reformulation of AA(m) based on this assumption.} Let $\widetilde{x}^{(k)}=q(x^{(k)})-x^{(k)}$, then AA(m) in Algorithm~\ref{alg:aa} is equivalent to the iterative update of $\widetilde{x}$ as follows:
%\vspace{0.05in}
\begin{enumerate}
\item Let $\widetilde{y}^{(k+1)}$ be the point on the affine subspace spanned by $\widetilde{x}^{(k-m_k)}, \cdots, \; \widetilde{x}^{(k)}$ with the smallest norm, that is,
\begin{equation}\label{eq:tildey_reformulation}
\color{black}
\widetilde{y}^{(k+1)}=\argmin \|y\|,\,\,\text{subject to }y=\sum_{j=k-m_k}^{k}\alpha_j^{(k)}\widetilde{x}^{(j)}\,\,\text{and} \sum_{j=k-m_k}^{k} \alpha_j^{(k)}=1.
\end{equation}
%\vspace{-0.15in}
\item  $\widetilde{x}^{(k+1)}
=q(\widetilde{y}^{(k+1)})$.
\end{enumerate}
%\vspace{0.05in}
{\color{black}
This reformulation will play an important role in the proof of Theorem~\ref{thm:main1}, where we use it to prove that the sequence  $\{\|\widetilde{x}^{(k)}\|\}_{k=1}^\infty$ converges monotonically to zero.    

The equivalency of this reformulation to AA(m) can be proved 
 by showing that if $\widetilde{x}^{(j)}=(q-I)x^{(j)}$ for all $1\leq j\leq k$, then $\widetilde{x}^{(k+1)}=(q-I)x^{(k+1)}$. First, note that the $\alpha$ values in \eqref{eq:aa_optimization} and \eqref{eq:tildey_reformulation} are the same by definition. Therefore, the equivalency is derived as follows:
\begin{align}\label{eq:reformulation_linear_update}
\widetilde{x}^{(k+1)}=(q-I)x^{(k+1)} 
=&(q-I)\Big(\sum_{j=k-m_k}^{k}\alpha_j^{(k)}q(x^{(j)})\Big)%&=(q-\bI)q\Big(\sum_{j=k-m_k}^{k}\alpha_j^{(k)}(\bx^{(j)})\Big) \nonumber \\
%&=q(q-\bI)\Big(\sum_{j=k-m_k}^{k}\alpha_j^{(k)}\bx^{(j)}\Big) \nonumber \\
\\=&q\Big(\sum_{j=k-m_k}^{k}\alpha_j^{(k)}(q-I)x^{(j)}\Big) \nonumber =q(\widetilde{y}^{(k+1)}), 
\end{align}
where we utilized $q(x)=Wx$, $(q-I)x=(W-I)x$ (which follows from the simplification by Corollary~\ref{cor:simplify_linear}), and 
\[
\widetilde{y}^{(k+1)}=\sum_{j=k-m_k}^{k}\alpha_j^{(k)}\widetilde{x}^{(j)} 
=\sum_{j=k-m_k}^{k}\alpha_j^{(k)}\big((q-I)x^{(j)}\big).
\]

%The compactness of this reformulation removes the discussion of the coefficients, $\alpha_j^{(k)}$, which often occurs in discussions of AA. Thus, with this reformulation we do not need to discuss the boundedness of the coefficients directly in the linear setting. 
%{\color{blue}
%The sequences $\widetilde{x}^{(k)}$ and $\widetilde{y}^{(k)}$ play a crucial role in our analysis  the geometric arguments we present in Section~\ref{sec:proofs}, enabling us to develop a unique framework for our convergence analysis. 
%}
}
\subsection{Nonlinear Operator}\label{sec:nonlinear}
Moving beyond linear operators, we now assume $q$ is nonlinear and has a first-order Taylor expansion at the fixed point $x_*$, with symmetric Jacobian $\nabla q(x_*)$ having operator norm less than one. 
Let $W:=\nabla q(x_*)$, then $q(x)$ is locally approximated at $x_*$ by its first-order Taylor expansion $q(x_*)+W(x-x_*)$. 
As a result of this local approximation, the fixed-point iteration has a local $r$-linear convergence factor of $\|W\|$, and 
 we prove a modified version of AA(m) bests the local convergence factor of the fixed-point iteration.

The modified version of AA(m) we consider for this setting includes   additional bound constraints on the coefficients $\alpha_j^{(k)}$. That is, at each iteration, rather than solving \eqref{eq:aa_optimization},
the following constrained problem is solved
\begin{equation}\label{eq:AA_minimize1}
\min_{\sum_{j=k-m_k}^{k} \alpha_j^{(k)}=1, |\alpha_j^{(k)}|\leq C_0}\Big\|\sum_{j=k-m_k}^{k}\alpha_j^{(k)}\big(q(x^{(j)})-x^{(j)}\big)\Big\|,
\end{equation}
where $C_0 >0$ is a fixed constant. We refer to AA(m) with \eqref{eq:aa_optimization} replaced by \eqref{eq:AA_minimize1} as modified AA(m). 
With this slight alteration, we  
prove the local convergence of modified AA(m) is superior to that of the fixed-point iteration.

{\color{black}
\begin{theorem}[Local convergence of modified AA(m) for nonlinear operators]\label{thm:main2} Assume $x_*$ is a fixed point of $q$ and $W:=\nabla q(x_*)$ is symmetric with  $\|W\|< 1$. In addition, assume $q(x)$ and $(q-I)^{-1}(x)$ can be locally approximated by their first-order expansion at $x=x_*$. That is, there exists constants $c_1, C_1>0$ such that for all $\|x-x_*\|\leq c_1$, 
\begin{align}\label{eq:assumption_main2}
\big\|q(x)-\big(x_*+ W(x-x_*)\big)\big\|&\leq C_1\|x-x_*\|^2,\\
\big\|(q-I)^{-1}(x)-(W-I)^{-1}(x-x_*)\big\|&\leq C_1\|x-x_*\|^2.\nonumber
\end{align}
Lastly, assume $q(x)$ and $(q-I)^{-1}(x)$ are Lipschitz continuous in a neighborhood of $x_*$. That is, for all $x_1,x_2$ such that $\|x_1-x_*\|,\|x_2-x_*\|\leq c_1$
\begin{align}\label{eq:assumption_main3}
\big\|q(x_1)-q(x_2)\big\|&\leq C_1\|x_1-x_2\|,\\\,\,\big\|(q-I)^{-1}(x_1)-(q-I)^{-1}(x_2)\big\|&\leq C_1\|x_1-x_2\|.\nonumber
\end{align}
Then there exists constants $c_0', C_0'>0$ such that when $C_0$ in \eqref{eq:AA_minimize1} is chosen to be larger than $C_0'$ and the initialization $x^{(1)}$ is sufficiently close to $x_*$ in the sense that $\|x^{(1)}-x_*\|<c_0'$, then the modified AA(m) converges with local $r$-linear convergence factor no larger than $\sqrt{w_0\|W\|}$, where $w_0$ is as defined in \eqref{eq:w0}.%  with $w_0 < \|{W}\|$.  
\end{theorem}
}
Since the {\color{black}worst-case local $r$-linear convergence factor} of the fixed-point iteration is $\|{W}\|$, Theorem \ref{thm:main2} implies that the modified AA(m) outperforms the fixed-point iteration.
The proof of Theorem \ref{thm:main2} is based on a similar argument to the proof of Theorem~\ref{thm:main1} and leverages the local first-order approximation of $q$. 

{\color{black}
\begin{remark}
It is worth noting that the modified subproblem \eqref{eq:AA_minimize1} is required for our theoretical analysis to hold; however, it is not necessary to implement in practice,  because solving the subproblem \eqref{eq:aa_optimization} for AA(m) is essentially equivalent to setting $C_0$ to be large in \eqref{eq:AA_minimize1}. 
In all of our experiments, including for the nonlinear setting, we implement AA(m). 
%do not solve \eqref{eq:AA_minimize1}. Rather, we solve the unconstrained updating step \eqref{eq:aa_optimization} in all of our experiments.  
\end{remark}
}

\subsubsection{A Nonlinear Example: Tyler's M-estimator}\label{sec:TME}
Tyler's M-estimator (TME) \citep{Tyler1987} is a popular method for robust covariance estimation, i.e., for estimation of the covariance matrix in a way that is resistant to the influence of outliers or deviations from the assumed statistical model. 
This estimator is obtainable through fixed-point iterations, and we show one such formulation satisfies the symmetry assumption in Theorem~\ref{thm:main2}. TME provides an important instance of a nonlinear operator, and we compare the performance of AA(m) to the fixed-point iterations for TME in Section~\ref{sec:simulations}. 

Given a dataset $\{x_i\}_{i=1}^n\subset\R^p$, the standard fixed-point iteration to compute TME is  
\begin{equation}\label{eq:TME1}
\Sigma^{(k+1)}=G(\Sigma^{(k)}):=\frac{p}{n}\sum_{i=1}^n\frac{x_i x_i^T}{x_i^T\Sigma^{(k)\,-1}x_i},
\end{equation}
subject to the constraint $\text{tr}(\Sigma^{(k+1)})=p$. \cite{KentTyler1988} established the uniqueness and existence of the underlying fixed point 
of \eqref{eq:TME1} and its constraint, which defines the TME, along with proving convergence. Their theory holds under a natural geometric condition that avoids concentration on lower-dimensional subspaces, i.e., any linear subspace
of $\mathbb{R}^p$ of 
dim. 
$1 \leq d \leq p-1$ 
contains less than ${nd}/{p}$ of the data points.
\cite{FranksMoitra2020} established the linear convergence of \eqref{eq:TME1}.
TME can be interpreted as the maximum likelihood estimator of the shape matrix of
the angular central Gaussian distribution \citep{tyler1987statistical} and of the generalized elliptical
distribution \citep{frahm2}. For data i.i.d.~sampled from a continuous elliptical distribution, TME emerges as the ``most robust" covariance estimator as $n$ approaches infinity~\citep{Tyler1987}. The practical utility of TME is evident in multiple domains of applications \citep{chen+2011,FrahmJaekel2007,OllilaKoivunen2003,OllilaTyler2012}. 

An alternative iterative procedure for solving for the TME was introduced in \cite{DBLP:journals/ma/ZhangCS16}. They showed an equivalent fixed-point iteration for TME is given by 
\begin{equation}\label{eq:TME20}
w^{(k)}={F}(w^{(k-1)}),
\end{equation} 
where ${F}:\R^n\rightarrow\R^n$ and the $j$-th component of ${F}$ is given by 
\begin{equation}\label{eq:TME2M}
F_j(w)=-\log\Big(x_j^T(\frac{p}{n}\sum_{i=1}^ne^{w_i}x_i x_i^T)^{-1}x_j\Big).
\end{equation}
One can quickly verify the equivalence by letting 
\begin{equation}\label{eq:TME0}
\Sigma^{(k)}=\frac{p}{n}\sum_{i=1}^ne^{w_i^{(k)}}x_i x_i^T.
\end{equation}
Then \eqref{eq:TME2M} implies
\begin{equation}\label{eq:TME00}
w_j^{(k+1)}=-\log(x_j^T\Sigma^{(k)\,-1}x_j),
\end{equation}
and from these equations one can obtain the standard fixed-point iteration. Notably, however, though these iterations are equivalent, the standard iterative procedure fails to have a symmetric Jacobian at fixed points while the alternative procedure does. 
The next lemma proves this fact. Since our numerical results directly apply this lemma and its proof is short, we include it here.
\begin{lemma}
\label{lemma:tme_via_teng}
Let ${w}^*$  be a fixed point of \eqref{eq:TME20}. Then, $\nabla {F}(w^*)$ is symmetric.
\end{lemma}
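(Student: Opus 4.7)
The plan is a direct computation of the Jacobian of $F$ followed by an application of the fixed-point identity \eqref{eq:TME00} that collapses the naive asymmetric formula into a manifestly symmetric one. First I would set $\Sigma(\bw) := \frac{p}{n}\sum_{i=1}^n e^{w_i}\bx_i\bx_i^T$, so that $F_j(\bw) = -\log\!\bigl(\bx_j^T\Sigma(\bw)^{-1}\bx_j\bigr)$. Using $\partial_{w_k}\Sigma(\bw) = \frac{p}{n} e^{w_k}\bx_k\bx_k^T$ together with the standard identity $\partial_{w_k}\Sigma^{-1} = -\Sigma^{-1}(\partial_{w_k}\Sigma)\Sigma^{-1}$, the chain rule gives
\[
\frac{\partial F_j}{\partial w_k}(\bw) \;=\; \frac{p}{n}\cdot\frac{e^{w_k}\,\bigl(\bx_j^T\Sigma(\bw)^{-1}\bx_k\bigr)^2}{\bx_j^T\Sigma(\bw)^{-1}\bx_j}.
\]

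Next I would evaluate this at a fixed point $\bw^*$. By \eqref{eq:TME00} (equivalently, by the definition $w_j^{(k+1)} = F_j(\bw^{(k)})$ applied at the fixed point), we have
\[
\bx_j^T\Sigma(\bw^*)^{-1}\bx_j \;=\; e^{-w_j^*},
\]
and substituting this into the denominator of the partial derivative yields
\[
\bigl[\nabla F(\bw^*)\bigr]_{jk} \;=\; \frac{p}{n}\, e^{w_j^*} e^{w_k^*}\bigl(\bx_j^T\Sigma(\bw^*)^{-1}\bx_k\bigr)^2.
\]
Since $\Sigma(\bw^*)^{-1}$ is symmetric, the quadratic form $\bx_j^T\Sigma(\bw^*)^{-1}\bx_k$ is symmetric in $(j,k)$, and so is its square; the scalar factor $e^{w_j^*}e^{w_k^*}$ is also symmetric. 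Hence $[\nabla F(\bw^*)]_{jk} = [\nabla F(\bw^*)]_{kj}$, proving the claim.

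There is no serious obstacle here: the argument is a short calculation. The only mildly delicate step is recognizing that the fixed-point relation $e^{w_j^*} = 1/(\bx_j^T\Sigma(\bw^*)^{-1}\bx_j)$ is exactly what is needed to cancel the asymmetric factor $1/(\bx_j^T\Sigma(\bw^*)^{-1}\bx_j)$ appearing in the generic $(j,k)$ entry of the Jacobian; off of the fixed-point set the Jacobian of $F$ is not symmetric, which is precisely the point emphasized in the text preceding the lemma. I would state this observation explicitly to contrast with the standard iteration \eqref{eq:TME1}, whose Jacobian fails the analogous symmetry calculation because its denominator does not pair with the $e^{w_k}$ factor in the same way.
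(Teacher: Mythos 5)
Your proposal is correct and follows essentially the same route as the paper's proof: compute the Jacobian entry $\partial F_j/\partial w_k = \tfrac{p}{n}e^{w_k}(\bx_j^T\Sigma^{-1}\bx_k)^2/(\bx_j^T\Sigma^{-1}\bx_j)$ and use the fixed-point identity $\bx_j^T\Sigma(\bw^*)^{-1}\bx_j = e^{-w_j^*}$ to replace the asymmetric denominator with $e^{w_j^*}$, yielding a manifestly symmetric expression. Your added remark that the Jacobian is not symmetric away from the fixed-point set is a nice explicit observation consistent with the paper's framing.
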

\begin{proof}
At the fixed point $w^*$ we have
%\[
$
w_j^*=-\log\Big(x_j^T(\frac{p}{n}\sum_{i=1}^ne^{w_i^*}x_i x_i^T)^{-1}x_j\Big)
$
%\]
which implies
\begin{equation}\label{eq:help_lem1}
x_j^T\left(\frac{p}{n}\sum_{i=1}^ne^{w_i^*}x_i x_i^T\right)^{-1}x_j=e^{-w_j^*}.
\end{equation}
From \eqref{eq:TME0}, \eqref{eq:TME00}, and \eqref{eq:help_lem1} it follows
$
e^{w_j^*}=\frac{1}{x_j^T\Sigma^{*\,-1}x_j}. 
$
Therefore,
\begin{align*}
&\frac{d}{d w_i}F_j(w^*)=\frac{x_j^T\Sigma^{*\,-1}(\frac{p}{n}e^{w_i^*}x_i x_i^T)\Sigma^{*\,-1} x_j}{x_j^T\Sigma^{*\,-1}x_j}
={e^{w_j^*}x_j^T\Sigma^{*\,-1}\left(\frac{p}{n}e^{w_i^*}x_i x_i^T\right)\Sigma^{*\,-1}x_j}
\end{align*}
from which we see $\frac{d}{d w_i}F_j(w^*)=\frac{d}{d w_j}F_i(w^*)$ for all $i$ and $j$.
\end{proof}

\subsection{Discussion of Results}\label{sec:discussion}
Our work establishes an upper bound on the $r$-linear convergence factor of AA(m) which we show in many cases to be strictly less than the convergence factor of the fixed-point iteration. 
This result is a new addition to the literature because we directly show AA(m) has a better {\color{black} r-linear convergence factor} which prior works have not clearly demonstrated, e.g., \cite{Evans2020,doi:10.1137/21M1449579,pollock2021anderson,pollock2019anderson}. 

Our analysis studied the improvement of AA(m) over every pair of iterations, and we see the estimates we derived can be tight. In Section~\ref{sec:simulations}, we show in our numerical experiments our estimation in \eqref{eq:twoiterations} is usually tight for AA(1).
In practice, we observe the improvement factor varies over iterations, see Figure~\ref{fig:sim_1}, which results in a smaller r-linear convergence factor asymptotically. Additionally, we have noted the improvement factor appears to depend upon $m$; therefore, we conjecture our estimation of the r-linear convergence factor could be improved by investigating the improvement of AA over more consecutive iterations. We also expect faster convergence when $m\geq 2$, improving from our current estimation that is independent of $m$. For example, Proposition~\ref{cor:main} already shows that there are some settings where AA(m) with $m\geq 2$ has an improved convergence factor over $m=1$, and existing results, e.g., \cite{scieur2019generalized}, establish the convergence factor when $m$ is infinite, which is numerically smaller than our {\color{black}bound} of $\sqrt{w_0\|W\|}${. 
We refer the reader to the acceleration monograph by \cite{OPT-036} for a complete review. We remark that while it is possible to apply these results to analyze the convergence of Anderson acceleration with restarts after every $m$ steps~\cite[eq. (3)]{pmlr-v130-bertrand21a}, their analysis does not apply to the setting of work, which analyzes AA with a moving window of points of depth $m$.}

As a closing remark, in practice it is common to apply a damping technique in AA(m), that is, instead of \eqref{eq:aa_update}, the next iterate is updated as
\begin{align*}
x^{(k+1)}=&{\color{black}\gamma}\sum_{j=k-m_k}^{k}\alpha_j^{(k)} q(x^{(j)}) + (1-{\color{black}\gamma})\sum_{j=k-m_k}^{k}\alpha_j^{(k)} x^{(j)}
\end{align*}
for some {\color{black}$\gamma \in (0,1]$.} If this technique is applied, similar results to ours can be obtained. For example, Theorem~\ref{thm:main1} and Proposition~\ref{cor:main} hold by replacing $W$ with {\color{black}$\gamma W+(1-\gamma)I$.}

%
%\begin{figure}[t]
%\begin{center}
%\includegraphics[scale=0.35]{Arxiv/arxiv_ver1/Thm_1_Example_3}
%\caption{Presents the left-hand side of \eqref{eq:twoiterations} for all iterations $k\geq 2$ for one hundred random initializations of AA(m), for $m=1,2,3$, with $q_1(x)={W}_1 x$ and ${W}_1 = \text{Diag}([-0.07, 0.62, -0.55,-0.6,0.15])$; LHS-AA(m) denotes the value of the left-hand side of \eqref{eq:twoiterations} for the iterates of AA(m) for each random initialization 
%%The blue lines are the LHS of \eqref{eq:twoiterations} for the random initializations of AA(1) 
%while the dotted black line shows the constant right-hand side (RHS) of \eqref{eq:twoiterations}. 
%}\label{fig:linear_exp_3} 
%\end{center}
%\end{figure}
%
%===================================================================
\section{Simulations}\label{sec:simulations}
%===================================================================
We investigated our theoretical results with numerical experiments on symmetric linear and nonlinear operators in Sections~\ref{sec:lin_operator_exp} and \ref{sec:TME_experiments} respectively\footnote{Code available at: \href{https://github.com/GarnerOpt/Improved-Convergence-Factor-of-Windowed-AA.git}{https://github.com/GarnerOpt/Improved-Convergence-Factor-of-Windowed-AA.git}}.
We also compared and discussed three different variants of AA in Section~\ref{sec:compare} for TME. 
Our numerical experiments provide empirical support for our theoretical results and demonstrate the exciting effectiveness of AA(m) for TME. 

%===================================================================
\subsection{Linear Symmetric Operators}\label{sec:lin_operator_exp}
%===================================================================
\begin{figure}[t]
\begin{center}
\begin{minipage}[c]{0.49\linewidth}
\includegraphics[width=\linewidth]{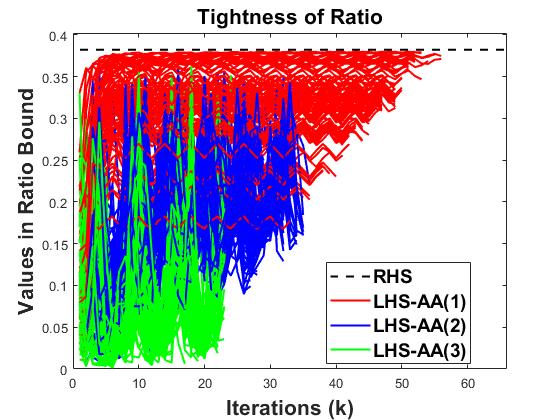}
\end{minipage}
\begin{minipage}[c]{0.49\linewidth}
\includegraphics[width=\linewidth]{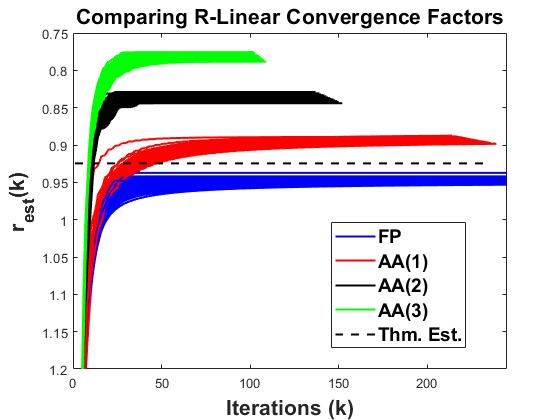}
\end{minipage}%
\caption{Displays the results of the simulations with linear operators;
{\color{black}each panel is for a different operator $q_i$.} The left panel presents the left-hand side of \eqref{eq:twoiterations} for all iterations $k\geq 2$ for one hundred random initializations of AA(m) with $q_1(x)={W}_1 x$; LHS-AA(m) denotes the value of the left-hand side of \eqref{eq:twoiterations} for the iterates of AA(m) for each random initialization while the dotted black line shows the constant right-hand side (RHS) of \eqref{eq:twoiterations}. 
The right panel displays the estimated r-linear convergence factors for the fixed-point (FP) iteration and AA(m) for $q_2(x)={W}_2x$ with one hundred random initializations of each method. The dashed line is the upper-bound on the r-linear convergence factor from Theorem \ref{thm:main1}.
}\label{fig:sim_1}
\end{center}
\end{figure}
%===================================================================
We conducted numerical experiments on linear operators $q(x)=Wx+a$, where $W$ is symmetric and $\|W\|<1$, and verified the tightness of the upper bound on the $r$-linear convergence factor proven in Theorem \ref{thm:main1}. We present two experiments; the first verifies the tightness of \eqref{eq:twoiterations}; the second displays the upper bound on the r-linear convergence factor for AA(m).

For experiment one, we let ${W}_1 = \text{Diag}([-0.07, 0.62, -0.55,-0.6,0.15])$ and defined $q_1(x):= W_1 x$.
We computed the fixed point of $q_1$ with AA(m) for $m=1$, 2, 3 and checked inequality \eqref{eq:twoiterations} for the iterates generated by AA(m) for 100 random initializations of each algorithm. 
Each instance of AA(m) terminated once the fixed-point error,  $\|q_1(x) - x\|$, was less than $10^{-12}$. 
The random initial vectors for AA(m) had entries drawn from a standard normal. 
Figure \ref{fig:sim_1} displays the results of this experiment.  

We observe in Figure \ref{fig:sim_1} that the value of the left-hand side of \eqref{eq:twoiterations} was always bounded by the right-hand side. 
The set of iterates generated by AA(1) obtained the minimum gap of 0.0024  between the left and right-hand sides of \eqref{eq:twoiterations}.
In relative terms, the minimum gap was 0.62\% of the value of the right-hand side which strongly supports the tightness of the inequality. 

Our second experiment demonstrates the upper bound on the r-linear convergence factor for AA(m) provided by Theorem \ref{thm:main1}. 
For this experiment, we randomly generated ${W}_2 \in \mathcal{S}^{500 \times 500}$ with all but the smallest eigenvalue drawn uniformly at random between -0.9 and 0.9; the smallest eigenvalue was chosen to be -0.95. 
We applied the fixed-point iteration (FP) and AA(m) to solve $q_2(x):= {W}_2 x$ from one hundred random initializations. We used the same initialization process and termination criteria as in the first experiment.
The r-linear convergence factor was estimated as 
\begin{equation}\label{eq:r_estM}
r_{est}(k):= \max_{n\geq k} \|x_* - x^{(n)}\|^{1/n}. 
\end{equation}
The right panel in Figure \ref{fig:sim_1} displays the results of the numerical experiment. The figure shows that the predicted bound on the r-linear convergence factor is both an upper bound on the performance of AA(m) and a strict lower bound on the performance of the fixed-point iteration. 
Additionally, it shows the r-linear convergence factors for AA(m) depend on the initialization, as observed in the literature, and that the convergence factor improves as $m$ increases. 

\subsection{Nonlinear operator}\label{sec:TME_experiments}
%\begin{figure}[t]
%\begin{center}
%\includegraphics[scale=0.35]{Arxiv/arxiv_ver1/TME_Data_Model_1_Ratio}\caption{
%Displays the value of the LHS of \eqref{eq:ratio2} for iterates generated by AA(m) for one hundred random initializations for an instance of Data Model 1. LHS-AA(m) denotes the value of the LHS of \eqref{eq:ratio2} for the iterates of AA(m); the black dotted line is the RHS of \eqref{eq:ratio2}. 
%}\label{fig:TME_1_Ratio} 
%\end{center}
%\end{figure}
%
%
%\begin{figure}[t]
%\begin{center}
%\includegraphics[scale=0.35]{TME_Data_Model_1_Rate}\caption{
%Displays the estimated r-linear convergence rates of AA(m) applied to \eqref{eq:TME2M} and the alternative TME fixed-point iteration for one hundred random initializations of the methods for an instance of Data Model 1. The dashed line is the upper-bound on the r-linear convergence factor given by Theorem~\ref{thm:main2}.
%}\label{fig:TME_1_Rate} 
%\end{center}
%\end{figure}
%
\begin{figure}%[b]
\begin{center}
\begin{minipage}[c]{0.49\linewidth}
\includegraphics[width=\linewidth]{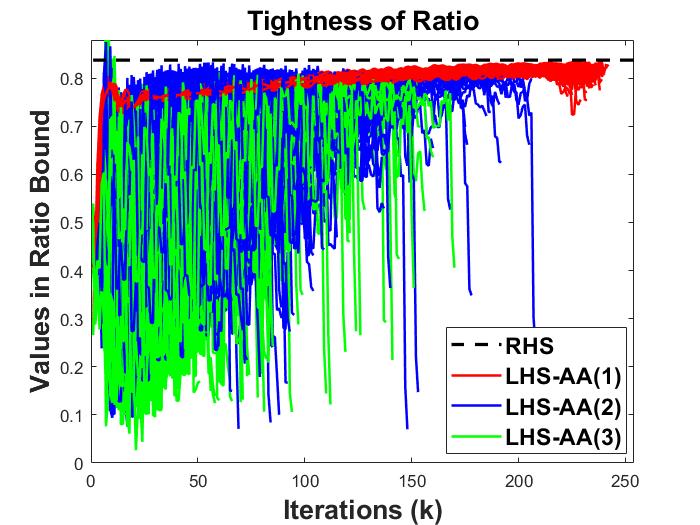}
\end{minipage}
\begin{minipage}[c]{0.49\linewidth}
\includegraphics[width=\linewidth]{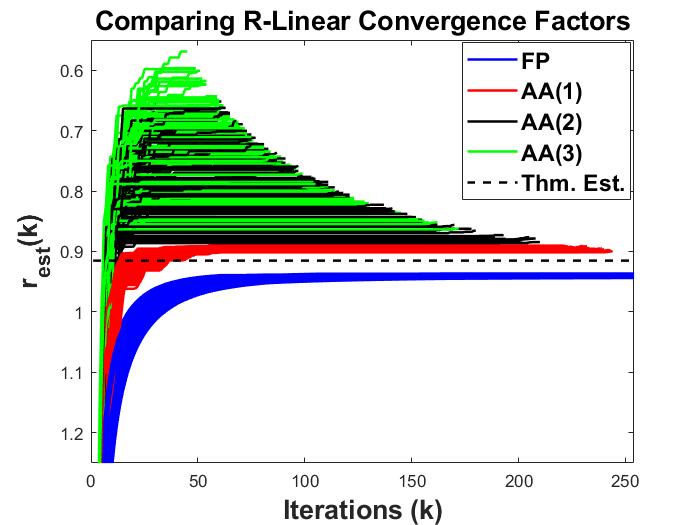}
\end{minipage}%
\caption{
Displays the results of the hundred random experiments conducted on Data Model 1.
The panel on the left displays the value of the LHS of \eqref{eq:ratio2} for iterates generated by AA(m) applied to \eqref{eq:TME2M}.
The panel on the right displays the estimated r-linear convergence {\color{black}factors} of AA(m) applied to the alternative TME fixed-point iteration. The dashed line is the upper-bound on the r-linear convergence factor given by Theorem~\ref{thm:main2} 
}\label{fig:sim_2}
\end{center}
%\vspace{-.18in}
\end{figure}
For our nonlinear operator experiments we investigated AA(m) applied to TME as described in Section \ref{sec:TME}. 
For our experiments we utilized two data models: 

\noindent{\bf Data Model 1}: Letting $(S_p)_{ij}=(0.7)^{|i-j|}$, data points were generated as $x_i = {S_p}^{1/2}{\zeta}$ where $\zeta_i \sim \mathcal{N}(0,1)$; we generated $110$ data points with $x_i \in \R^{100}$; see Section 7.1 of \cite{goes2020robust} for a full description of this model. 
%\vspace{0.075in}
 
\noindent{\bf Data Model 2}: This model is an inlier-outlier model containing 997 data points in $\R^{100}$. The data matrix ${X}$, whose columns form the dataset $\{x_i\}_{i=1}^{n}$, was formed as
\[
%\vspace{-0.5in}
{X} = \begin{pmatrix}   
                \text{randn}(n_0,D)\cdot\text{randn}(D,D) \\
                \big[\text{randn}(n_1,d)/\sqrt{d}\; | \; {0} \big] 
        \end{pmatrix}^\top, 
%\vspace{-0.5in}
\]
where $n_0 = 500$, $n_1 = 497$, $D=100$, $d=50$, and $\text{randn}(m,n)$ forms an $m$-by-$n$ matrix with entries drawn from a standard normal. 
 %\vspace{0.075in}

We begin with numerical tests on Data Model 1. Our first experiment compared the alternative TME fixed-point iteration to AA(m) applied to \eqref{eq:TME2M} for an instance of Data Model 1 %to verify the theoretical results 
while the second and third experiments display the superior performance of AA(m) over the standard TME fixed-point method. 

In the first experiment, each method was randomly initialized one hundred times, and all methods were terminated once the fixed-point error was less than $10^{-12}$. To fairly estimate the r-linear convergence factor of the methods, we first computed the unique solution to the TME having the trace equal to $p=100$. 
The estimates of the r-linear convergence factors for each method were then computed from \eqref{eq:r_estM} by using 
\eqref{eq:TME0} to transform each vector iterate, ${w}^{(k)}$, into a matrix which was then scaled to have trace equal to $p$. 

The reason for this estimation of the r-linear convergence factor follows from the scale-invariance of the TME, i.e., ${F}$ has multiple fixed points: if $w_*$ is a fixed point then $w_*+ {c}$, with constant vector ${c}$, is a fixed point. 
The scale-invariance also causes $W:=\nabla {F}({w}_*)$, with ${w}_*$ a fixed point of \eqref{eq:TME2M}, to have an eigenvalue of $1$ with a constant eigenvector. 
As a result, we used the second largest eigenvalue of $W$ to calculate the right-hand side of \eqref{eq:ratio2} and the upper bound on the r-linear convergence factor for AA(m). 
We observed this generated accurate bounds in our experiments.

Figure~\ref{fig:sim_2} presents the results of the experiments performed on Data Model 1.
We observe in the left panel of Figure~\ref{fig:sim_2} that the right-hand side of \eqref{eq:ratio2} serves as an upper bound on the improvement ratio. 
Our theory predicts the bound will hold for all iterates sufficiently close to a solution and the figure supports this. 
The right panel in Figure~\ref{fig:sim_2} clearly presents the upper bound on the r-linear convergence factor given by Theorem \ref{thm:main2} holds; AA(m) for $m=1$, $2$, and $3$ outperform the estimate while the fixed-point scheme given by \eqref{eq:TME2M} has an r-linear factor larger than the given estimate. 

\begin{figure}
\centering
\begin{minipage}[c]{0.47\linewidth}
\includegraphics[width=\linewidth]{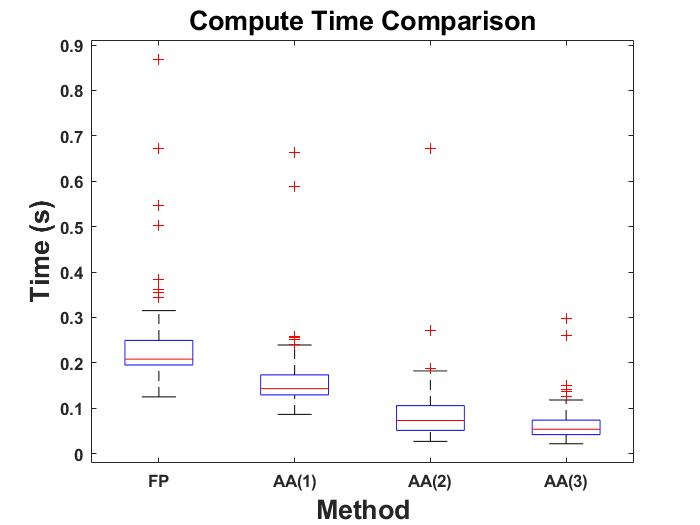}
\end{minipage}
\begin{minipage}[c]{0.47\linewidth}
\includegraphics[width=\linewidth]{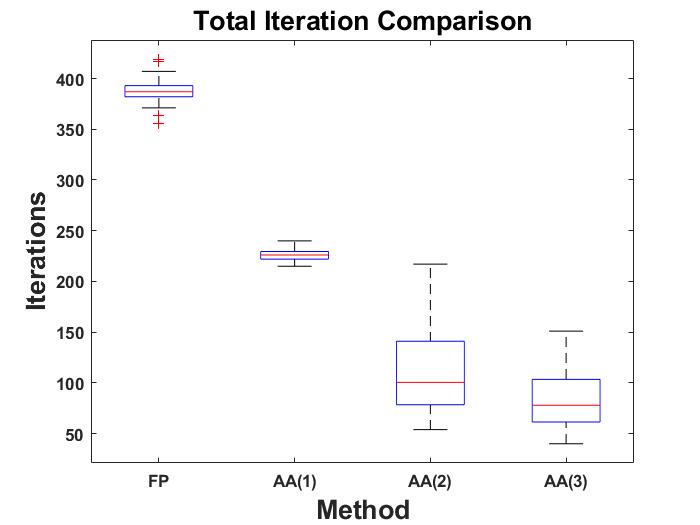}
\end{minipage}%
\caption{Displays boxplots for the computational time and total iterations required to solve TME with the standard TME fixed-point iteration (FP) and AA(m) applied to the alternative TME fixed-point iteration for Data Model 1 with  $(p,n)=(100,110)$.
}\label{fig:p100}
\end{figure}
%-------------------------------------------
%% Two panel for p=200 and n=210
%-------------------------------------------
\begin{figure}
\centering
\begin{minipage}[c]{0.47\linewidth}
\includegraphics[width=\linewidth]{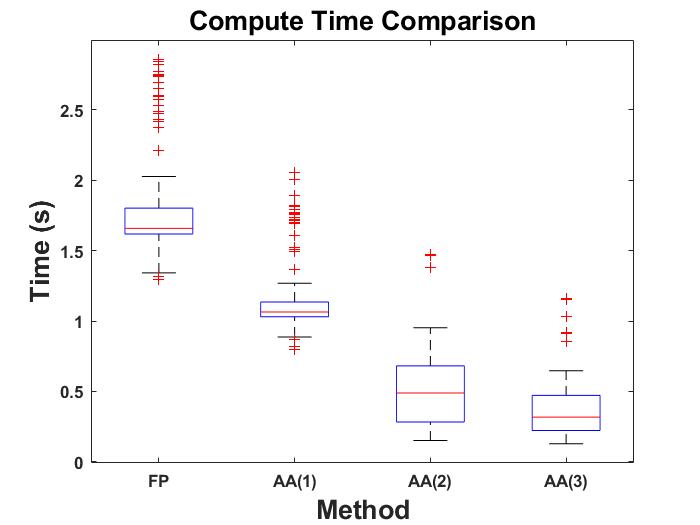}
\end{minipage}
\begin{minipage}[c]{0.47\linewidth}
\includegraphics[width=\linewidth]{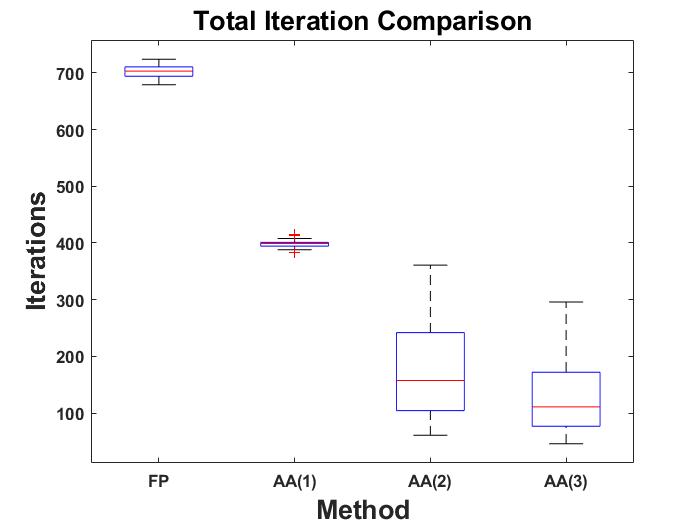}
\end{minipage}%
\caption{Displays boxplots for the computational time and total iterations required to solve TME with the standard TME fixed-point iteration (FP) and AA(m) applied to the alternative TME fixed-point iteration for Data Model 1 with $(p,n)=(200,210)$.
}\label{fig:p200}
\end{figure}
%-------------------------------------------
% Two panel for Data Model 2
%-------------------------------------------
\begin{figure}
\centering
\begin{minipage}[c]{0.47\linewidth}
\includegraphics[width=\linewidth]{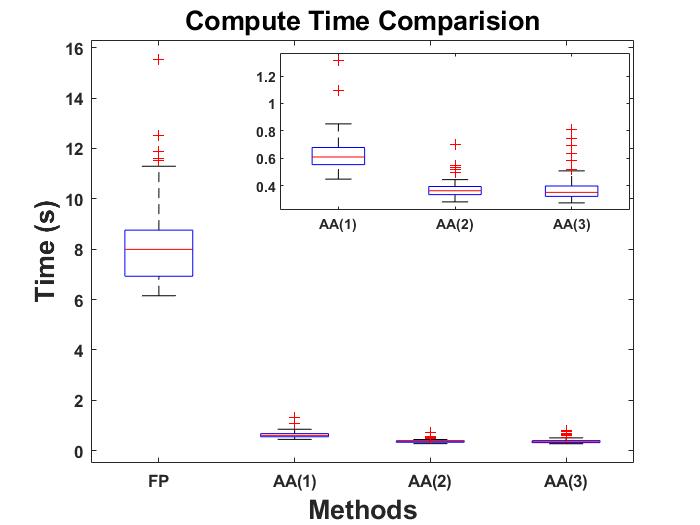}
\end{minipage}
\begin{minipage}[c]{0.47\linewidth}
\includegraphics[width=\linewidth]{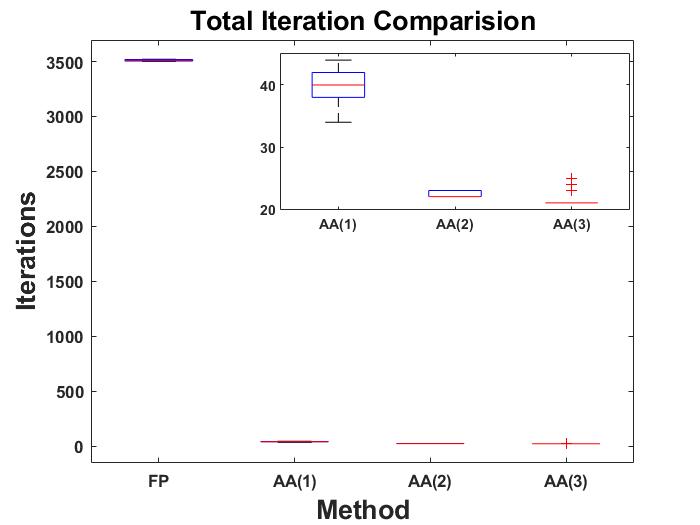}
\end{minipage}%
\caption{
Presents a box-plot of the iterations and time required for each method to compute a solution for TME with Data Model 2; FP refers to the fixed-point iteration \eqref{eq:TME1} with scaled iterate to have trace equal to $p$ while AA(m) references AA(m) applied to \eqref{eq:TME2M}. 
}\label{fig:Box_Plot} 
\end{figure}
%---------------------------------------------------------------------------------------------------------
%---------------------------------------------------------------------------------------------------------
We next present two numerical tests which compare the computational expense and performance of AA(m) and the standard TME fixed-point iteration on Data Model 1. 
We utilized the standard TME fixed-point iteration over the alternative approach when comparing computational time and cost because 
we found the fixed-point iteration given by \eqref{eq:TME1} outperformed \eqref{eq:TME2M} on these measures. 
%therefore, we compared AA(m) applied to \eqref{eq:TME2M} to the fixed-point iteration \eqref{eq:TME1}. 
As before, we solved ten different instantiations of Data Model 1 from ten different random initializations of our methods. The two settings of Data Model 1 we considered were $(p,n)=(100,110)$ and $(p,n)=(200,210)$. Each method ran until the fixed-point error was less than or equal to $10^{-12}$. Figures~\ref{fig:p100} and \ref{fig:p200} display the results of the numerical experiments. From the figures, we see that in all of the tests AA(m) required less iterations to compute a solution to the desired tolerance. For the setting $(p,n)=(100,110)$, AA(3) always obtained a solution faster than the fixed-point iteration while AA(2) computed a solution slower one time in all of the one hundred experiments. The results were similar for $(p,n)=(200,210)$. In this setting, AA(2) and AA(3) computed solutions faster than the fixed-point approach in all tests, and the fixed-point iteration only computed a solution faster than AA(1) in two tests. Thus, these experiments demonstrate AA provides a powerful method for solving the TME and can significantly outperform the standard fixed-point schemes. 

Data Model 2 was used in our second set of experiments. 
We solved ten different instantiations of the model with ten different random initializations of each approach for a total of one hundred implementations of each method. 
All algorithms were terminated when the fixed-point error was less than $10^{-12}$.  

The results of the numerical tests are presented in Figure \ref{fig:Box_Plot} with a set of box-plots. The figure clearly demonstrates AA(1), AA(2), and AA(3) computed solutions to TME faster than the standard fixed-point iteration for all experiments with AA(m) converging nearly ten-times faster in every test. The comparison for Data Model 2 was even more striking than for Data Model 1. 
In this inlier-outlier setting, the fixed-point method was completely outclassed by AA(m); therefore, this suggests AA(m) can prove to be a great benefit in settings where the standard fixed-point approach exhibits slow linear convergence.  

Besides investigating the computational advantage of AA(m), we further validated the strength of our theory in the non-linear setting. 
Figure~\ref{fig:extra_DM2} compares the r-linear convergence factor of AA(m) to the standard TME fixed-point iteration and displays the bound in \eqref{eq:ratio2} for the experiments displayed in Figure~\ref{fig:Box_Plot}. 
%The figure provides further validation of the strength of the bound given in \eqref{eq:ratio2} and showcases the superiority of AA(m) in this inlier-outlier setting compared to the standard fixed-point approach. 
Figure~\ref{fig:Box_Plot} clearly demonstrated AA vastly decreased the compute time, and the right panel in Figure~\ref{fig:extra_DM2} gives some evidence as to why. The r-linear convergence factors of AA(2) and AA(3) in the one hundred experiments were approximately 0.4 while the r-linear convergence factor of the standard TME fixed-point iteration was about one. As discussed in Section~\ref{sec:discussion}, we note the closeness of the estimate of the r-linear convergence factor to the fixed-point method's {\color{black} convergence factor} suggests our theory can further be refined to achieve a better bound. This might be achieved by investigating the improvement of AA(m) over more iterations than currently investigated in our analysis. 

%% Two panel for p=100 and n=105
\begin{figure}%[H]
\centering
\begin{minipage}[c]{0.47\linewidth}
\includegraphics[width=\linewidth]{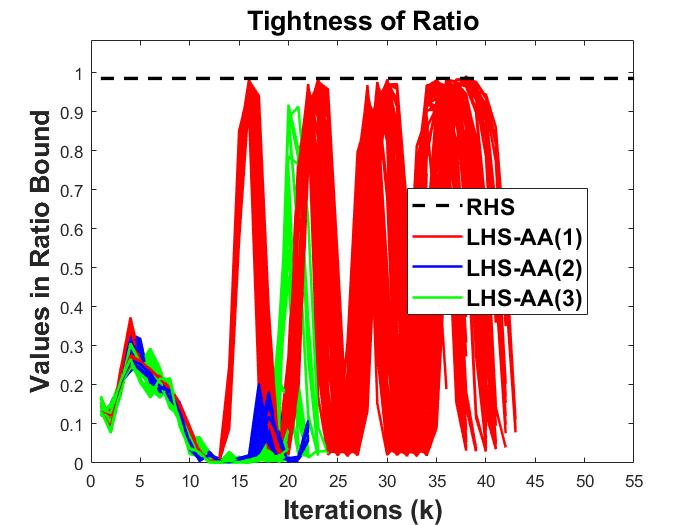}
\end{minipage}
\begin{minipage}[c]{0.47\linewidth}
\includegraphics[width=\linewidth]{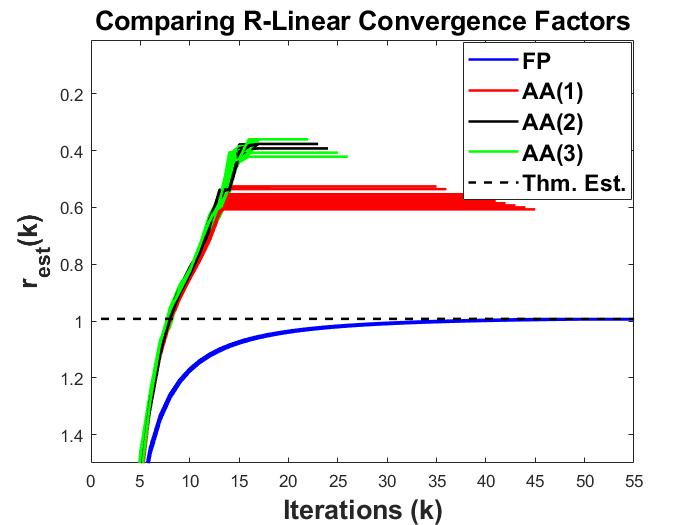}
\end{minipage}%
\caption{
Displays the results of the hundred random experiments conducted on Data Model 2.
The panel on the left displays the value of the LHS of \eqref{eq:ratio2} for iterates generated by AA(m) applied to \eqref{eq:TME2M}.
The panel on the right displays the estimated r-linear convergence {\color{black}factors} of AA(m) applied to the alternative TME fixed-point iteration. The dashed line is the upper-bound on the r-linear convergence factor given by Theorem~\ref{thm:main2}.
}\label{fig:extra_DM2}
\end{figure}

\subsection{Comparison of Anderson Acceleration Variants for TME}\label{sec:compare}
The three most popular AA variants are the following ones. %\vspace{0.075in}

\noindent{\bf Full-memory AA}  utilizes all prior iterates to compute the next iterate. Setting $m=\infty$ in Algorithm~\ref{alg:aa} is equivalent to this approach, so we denote it %full-memory AA 
as AA($\infty$).
%\vspace{0.075in}

\noindent{\bf Restarting AA} operates as full-memory AA until a certain number of fixed iterations $m$ is reached at which point in time the method clears its memory and begins anew with the last iterate as a new initialization; we denote this version as restart-AA(m).%\vspace{0.075in}

\noindent{\bf Windowed AA}, with window size $m$, is AA(m) as defined in Algorithm~\ref{alg:aa}.
%\vspace{0.075in}

For discussions on these methods and their analysis see the aforementioned references. 
%It was shown by Walker and Ni \cite{walker2011anderson} in the case ${q}$ is affine that full-memory AA is essentially GMRES \cite{saad1986} applied to a linear system; however, in practice this approach is rarely applied due its high computational cost. 
%The restarting and windowed AA variants are the most utilized, but a clear gap exists between them in-terms of analysis. Though Toth and Kelley \cite{Toth2015} proved windowed AA is at least as good as the fixed-point iteration, this paper, 
%to the best of our knowledge, seems to be the first proving improved convergence over the fixed-point iteration while the restarted AA variant has been shown to best the fixed-point method in recent works \cite{Both2019,DeSterck2024}. 
Now, the different variants of AA do not always behave in manners which are easy to understand.
%Though windowed AA has proven more difficult to analyze than restarting AA, this should not lead the reader to assume it is practically superior. 
To illustrate this, we applied all three AA variants to {\color{black} a single instance of each of} the two TME Data models from Section~\ref{sec:TME_experiments}. The results {\color{black}from these two tests} can be viewed in Figure~\ref{fig:compare_AA}, {\color{black}where we measure the fixed-point errors of the iterates of each variant. 
Recall that the fixed-point error at $x \in \R^n$ for an operator $q:\R^n \rightarrow \R^n$ is $\|q(x)-x\|$.}
\begin{figure}
\centering
\begin{minipage}[c]{0.47\linewidth}
\includegraphics[width=\linewidth]{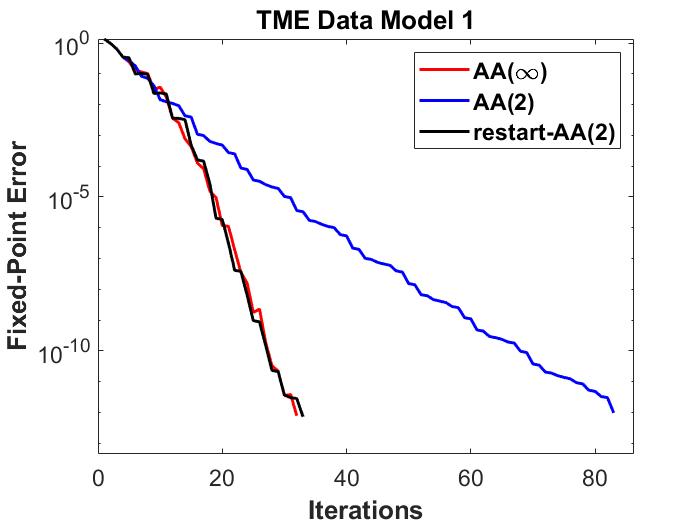}
\end{minipage}
\begin{minipage}[c]{0.47\linewidth}
\includegraphics[width=\linewidth]{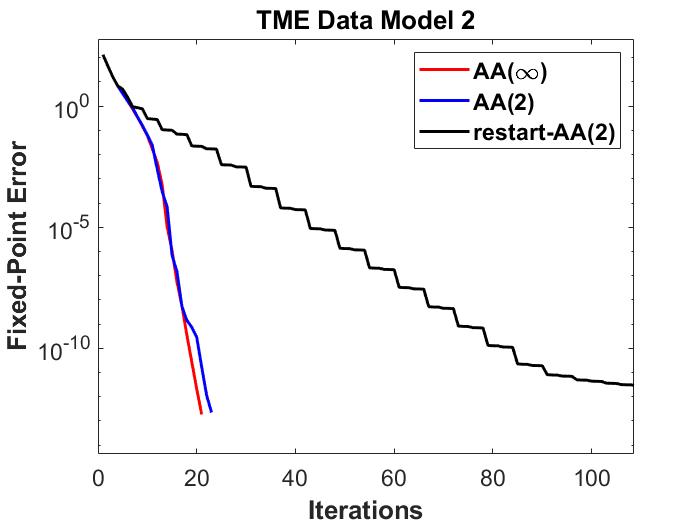}
\end{minipage}%
\caption{Displays the results of comparing full-memory AA, restarting AA, and windowed AA on the TME problem with Data Models 1 and 2. The fixed-point error is the norm of the difference of the consecutive iterates of each method. 
}\label{fig:compare_AA}
\end{figure}
For Data Model 1, restarting AA with $m=2$ performed just as {\color{black}well} as full-memory AA and was significantly better than windowed AA with $m=2$; however, the roles were reversed when the methods were applied to the inlier-outlier data model, Data Model 2. 
In this setting, windowed AA was clearly better while restarting AA appeared to alternate between different {\color{black} convergence factors}. 
We do not presently have a clear explanation for the difference in the behavior between the two methods though we suspect the phenomenon is highly problem dependent. 
This could be a fruitful direction to explore in the future.  

\section{Technical Proofs}\label{sec:proofs}
%We present here the proofs of the results stated in Section~\ref{sec:main}. 

\subsection{Proof of Theorem~\ref{thm:main1}}

The proof will be divided into two parts. In the first part, we establish \eqref{eq:twoiterations} and that the convergence factor of AA(m) is bounded above by $\sqrt{w_0\|W\|}$. In the second part, we demonstrate that $w_0 \leq \|W\|$, and equality holds if and only if $\lambda_{\max}(W) = -\lambda_{\min}(W)$.  The two parts of the proof are completely independent of each other.
 
\textbf{Part I: proof of \eqref{eq:twoiterations} and the bound on the convergence factor of AA(m)} As described in Section~\ref{sec:reformulate_AA} and Corollary~\ref{cor:simplify_linear}, we may assume without loss of generality ${a}={0}$, $q(x)=Wx$,  ${x}_* = {0}$, and reformulate AA(m) as follows: %\vspace{0.075in}
\begin{enumerate}
\item Let $\widetilde{y}^{(k+1)}$ be the point on the affine subspace spanned by $\widetilde{x}^{(k-m_k)}, \cdots, \; \widetilde{x}^{(k)}$ with the smallest norm.

\item  $\widetilde{x}^{(k+1)}
=q(\widetilde{y}^{(k+1)})$.
\end{enumerate}
%\vspace{0.075in}
Here $m_k:=\min(m,k)$ and we assumed in our derivation of the reformulation $\widetilde{x}^{(k)}=q(x^{(k)})-x^{(k)}$. We first verify \eqref{eq:twoiterations} and begin by noting that \eqref{eq:twoiterations} is equivalent to
\[
\frac{\|\widetilde{x}^{(k+1)}\|}{\|\widetilde{x}^{(k-1)}\|}\leq w_0 \|W\|.
\]
Since $\widetilde{x}^{(k+1)} 
=q(\widetilde{y}^{(k+1)})$ by Step 2 of the reformulation above, it follows thats
$
\|\widetilde{x}^{(k+1)}\|\cdot \|W\|^{-1} \leq \|\widetilde{y}^{(k+1)}\|.
$
Therefore, to prove \eqref{eq:twoiterations}, it is sufficient to show 
 \begin{equation}\label{eq:twoiterations1}
\frac{\|\widetilde{y}^{(k+1)}\|}{\|\widetilde{x}^{(k-1)}\|}\leq w_0.
 \end{equation}
We now prove that  \eqref{eq:twoiterations1} holds. Let $\widehat{y}^{(k+1)}$ be the projection of the origin to the line connecting $\widetilde{x}^{(k-1)}$ and $\widetilde{x}^{(k)}$, thereby making it an affine combination of $\widetilde{x}^{(k-1)}$ and $\widetilde{x}^{(k)}$. By the definition of $\widetilde{y}^{(k+1)}$ in Step 1 of the reformulation, we have
\begin{equation}\label{eq:twoiterations2}
\|\widehat{y}^{(k+1)}\|\geq \|\widetilde{y}^{(k+1)}\|.
\end{equation}
Combine \eqref{eq:twoiterations2} with
\begin{equation*}
\frac{\|\widehat{y}^{(k+1)}\|}{\|\widetilde{x}^{(k-1)}\|}=\sin\angle(O \widetilde{x}^{(k-1)} \widehat{y}^{(k+1)})=\sin\angle(O\widetilde{x}^{(k-1)}\widetilde{x}^{(k)}),
\end{equation*}
it follows that to prove   \eqref{eq:twoiterations1}, it is sufficient to show \begin{equation}\label{eq:twoiterations3}\sin\angle(O\widetilde{x}^{(k-1)}\widetilde{x}^{(k)}) \leq w_0.\end{equation} 
To prove \eqref{eq:twoiterations3}, we introduce the following lemma:    
\begin{lemma}\label{lemma:bound}
For any symmetric matrix $W$, $Wx$ lies in the ball centered at\\ $(\lambda_{\max}(W)+\lambda_{\min}(W))x/2$ with radius $(\lambda_{\max}(W)-\lambda_{\min}(W))\|x\|/2$.
\end{lemma}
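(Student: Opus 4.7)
The plan is to rewrite the claim as an operator-norm bound on a shifted matrix and then invoke the spectral theorem. Specifically, setting $c=(\lambda_{\max}(\bW)+\lambda_{\min}(\bW))/2$ and $r=(\lambda_{\max}(\bW)-\lambda_{\min}(\bW))/2$, the statement ``$\bW\bx$ lies in the ball centered at $c\bx$ of radius $r\|\bx\|$'' is equivalent to
\[
\|(\bW-c\bI)\bx\|\leq r\|\bx\|\qquad\text{for all }\bx,
\]
which in turn is equivalent to $\|\bW-c\bI\|\leq r$.

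First I would note that $\bW - c\bI$ is symmetric because $\bW$ is symmetric, so its spectral norm equals the largest absolute value of its eigenvalues. The eigenvalues of $\bW-c\bI$ are exactly $\lambda_i(\bW)-c$, and by the choice of $c$ as the midpoint of the spectrum these values lie in the interval $[\lambda_{\min}(\bW)-c,\lambda_{\max}(\bW)-c]=[-r,r]$. Hence $\max_i|\lambda_i(\bW)-c|\leq r$, giving $\|\bW-c\bI\|\leq r$ as needed.

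To make the argument fully explicit, I would expand $\bx$ in an orthonormal eigenbasis $\{\bv_i\}$ of $\bW$, writing $\bx=\sum_i\alpha_i\bv_i$, so that
\[
(\bW-c\bI)\bx=\sum_i(\lambda_i-c)\alpha_i\bv_i,
\]
and by orthonormality
\[
\|(\bW-c\bI)\bx\|^2=\sum_i(\lambda_i-c)^2\alpha_i^2\leq r^2\sum_i\alpha_i^2=r^2\|\bx\|^2.
\]
Taking square roots yields the claim.

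There is essentially no obstacle here; the lemma is a direct consequence of the spectral theorem once the statement is rewritten in the form $\|\bW-c\bI\|\leq r$. The only thing to be careful about is recognizing that the shift by the midpoint $c$ is what centers the spectrum symmetrically around zero, which is what gives the tight radius $r$.
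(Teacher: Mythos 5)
Your proof is correct and follows essentially the same route as the paper: both reduce the claim to the operator-norm bound $\|\bW-c\bI\|\leq r$ with $c$ the spectral midpoint, the paper stating this equality directly and you additionally justifying it via the spectral theorem. No issues.
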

\begin{proof}
Given any $W \in \mathcal{S}^{n\times n}$ and $x\in \R^n$, we see
\begin{align}
\Big\|Wx - \frac{1}{2}\big(\lambda_{\max}(W) +\lambda_{\min}(W)\big)x \Big\| &\leq \Big\| W - \frac{1}{2}\big(\lambda_{\max}(W) +\lambda_{\min}(W)\big){I}\Big\| \cdot \|x\|\nonumber \\
&= \frac{1}{2}\big(\lambda_{\max}(W) -\lambda_{\min}(W)\big)\|x\|. \label{eq:radius_argument}
\end{align}
\end{proof}

\begin{figure}[t]
\begin{center}
\includegraphics[scale=0.45]{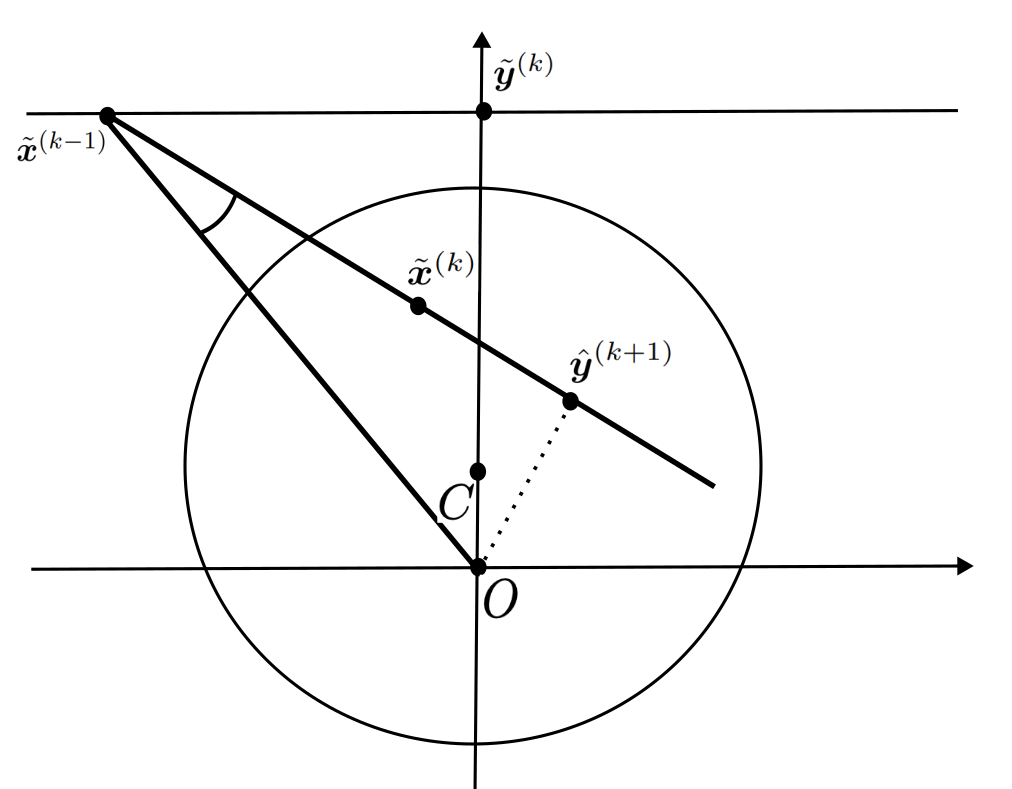}\caption{A visualization of $\widetilde{x}^{(k-1)}$, $\widetilde{x}^{(k)}$, $\widetilde{y}^{(k)}$, and $\widehat{y}^{(k+1)}$, as described in Part I of the proof of Theorem~\ref{thm:main1}. The ball with center $C$ represents the region defined by Lemma~\ref{lemma:bound}, showing the possible values of $W \widetilde{y}^{(k)}$.
}\label{fig:visualization_proof}
\end{center}
\end{figure}

{
We now utilize a geometric argument. First, we note that, unitary transformation and scaling does not change the performance of AA(m). Therefore, without loss of generality, we may assume  $\widetilde{y}^{(k)} = [0,1,0,\hdots,0]$. Then, following the definition of $\widetilde{y}^{(k)}$ in \eqref{eq:tildey_reformulation}, $\widetilde{x}^{(k-1)}-\widetilde{y}^{(k)}$ is perpendicular to $\widetilde{y}^{(k)}$, and $\widetilde{x}^{(k-1)}$ under our chosen coordinate system lies on the hyperplane through the point $(0,1,0,\hdots,0)$ with a normal vector being $(0,1,0,\hdots,0)$. Therefore, with an additional unitary transformation we may also assume $\widetilde{x}^{(k-1)}-\widetilde{y}^{(k)}$ is parallel to the first standard basis vector in $\R^n$. A geometric representation of our transformed problem, presenting the relationship between $\widetilde{x}^{(k-1)}$, $\widetilde{x}^{(k)}$, $\widetilde{y}^{(k)}$, and $\widehat{y}^{(k+1)}$, is given in Figure~\ref{fig:visualization_proof}. 

Using Lemma~\ref{lemma:bound}, we denote in Figure~\ref{fig:visualization_proof} all the possible values of $\widetilde{x}^{(k)}=W \widetilde{y}^{(k)}$, which is given by all the points contained in the ball centered at 
$C=(\lambda_{\max}(W)+\lambda_{\min}(W))\widetilde{y}^{(k)}/2$ with radius
\begin{equation}\label{eq:radius}
r = (\lambda_{\max}(W)-\lambda_{\min}(W))\|\widetilde{y}^{(k)}\|/2=(\lambda_{\max}(W)-\lambda_{\min}(W))/2. 
\end{equation}
}We remark that $\widetilde{x}^{(k)}$ does not need to lie on the plane spanned by $\widetilde{x}^{(k-1)}$ and $\widetilde{y}^{(k)}$, as illustrated in Figure~\ref{fig:visualization_proof}, and the remainder of the proof remains valid even if $\widetilde{x}^{(k)}$ is outside this plane.

Recall that our aim is to bound  $\angle(O\widetilde{x}^{(k-1)}\widetilde{x}^{(k)})$ and prove \eqref{eq:twoiterations3}.
{
%(Referee 2 (point 23) wants us to argue why the maximal angle must occur with $\widetilde{x}^{(k)}$ in the plane depicted in the Figures 9 and 10.)
Note that 
\begin{align}\label{eq:geometric1}
\angle(O\widetilde{x}^{(k-1)}\widetilde{x}^{(k)}) &\leq \angle(O\widetilde{x}^{(k-1)}C)+\angle(C\widetilde{x}^{(k-1)}\widetilde{x}^{(k)}) \nonumber \\
%&\leq \angle(O\widetilde{x}^{(k-1)}C)+\sin^{-1}\left(\frac{r}{\color{red}\dist(C,\widetilde{x}^{(k)})}\right),
&\leq \angle(O\widetilde{x}^{(k-1)}C)+\sin^{-1}\left(\frac{r}{\|C - \widetilde{x}^{(k-1)}\|}\right),
\end{align} 
where the first inequality follows from the triangle inequality and the second inequality follows from 
%\begin{align}\nonumber&\sin\angle(C\widetilde{x}^{(k-1)}\widetilde{x}^{(k)})= \frac{\dist(\widetilde{x}^{(k-1)}, \text{line connecting $C$ and $\widetilde{x}^{(k)}$})}{\dist(C,\widetilde{x}^{(k)})}\\\leq &\frac{ \dist(C,\widetilde{x}^{(k)})}{\dist(C,\widetilde{x}^{(k)})}\leq \frac{r}{\dist(C,\widetilde{x}^{(k)})}.
%\label{eq:geometric2}\end{align}

\begin{align}
\sin\angle(C\widetilde{x}^{(k-1)}\widetilde{x}^{(k)})= \frac{\dist(C, \text{line connecting $\widetilde{x}^{(k-1)}$ and $\widetilde{x}^{(k)}$})}{\|C-\widetilde{x}^{(k-1)}\|}\leq \frac{r}{\|C-\widetilde{x}^{(k-1)}\|}.
\label{eq:geometric2}\end{align}
Now the estimation of the upper bound of the RHS of \eqref{eq:geometric1} follows from analyzing the two parts separately. %a similar argument of Lemma~\ref{lem:w_geometric}.
Recall by our prior assumption it follows $\|\widetilde{y}^{(k)}\|=1$ and assume in addition $\|\widetilde{y}^{(k)}-\widetilde{x}^{(k-1)}\|=1/\delta$, then we have %the argument of \eqref{eq:angle2} implies that
\begin{align}
\frac{r}{\|C - \widetilde{x}^{(k-1)}\|}= \frac{|\lambda_{\max}(W)-\lambda_{\min}(W)|\delta}{\sqrt{4+\big(2-\lambda_{\max}(W)-\lambda_{\min}(W)\big)^2\delta^2}}
\label{eq:geometric3}\end{align}
and %the argument of \eqref{eq:angle3} implies that
\begin{align}
\nonumber\angle(O\widetilde{x}^{(k-1)}C)=&\bigg|\angle(O\widetilde{x}^{(k-1)}\widetilde{y}^{(k)})-\angle(\widetilde{y}^{(k)}\widetilde{x}^{(k-1)}C)\bigg|\\
=&\bigg|\tan^{-1}(\delta)-\tan^{-1}\Big(\Big(1-\frac{\lambda_{\max}(W)+\lambda_{\min}(W)}{2}\Big)\delta\Big)\bigg|. 
\label{eq:geometric4}\end{align}
Combining 
\eqref{eq:geometric1}, \eqref{eq:geometric3}, \eqref{eq:geometric4}, and the definition of $w_0$ in \eqref{eq:w0}, we proved \eqref{eq:twoiterations3}. As discussed previously, it follows that
% have 
% \begin{equation}\label{eq:twoiteration31}
% \frac{\|\widehat{y}^{(k+1)}\|}{\|\widetilde{x}^{(k-1)}\|}\leq w_0
% \end{equation}
%Combining \eqref{eq:twoiteration31} with \eqref{eq:twoiterations2}, 
\eqref{eq:twoiterations1} and \eqref{eq:twoiterations} are  established.
%{\color{red}(I think here is the place we show $w_0$ can be obtained. Since doing this can be done by showing that $\widetilde{x}^{(k)}$ is in the 2-D plane we depict in Figure 9 it shall showcase the geometric picture is accurate, and this aspect of the proof is not present for Lemma 5.2)}

%Next we show \eqref{eq:twoiterations} implies an upper bound on the convergence factor of AA(m). 
}

% \begin{figure}[t]%[htbp]
% \begin{center}
% \includegraphics[scale=0.35]{Arxiv/arxiv_ver1/visualization.png}
% \caption{Visualization of $w_0$ defined in \eqref{eq:w0}.  It represents the sine of the largest possible angle $\angle(OPQ)$ formed as  $P$ traverses along the line $y=1$ and $Q$ traverses along the circle.}\label{fig:w0}
% \end{center}
% \end{figure}

% Note, another possible orientation of the figure has $C$ below the $x$-axis. Then the visualization would be different as the line connecting $\widetilde{x}^{(k-1)}$ and $\widetilde{x}^{(k)}$ will be the other tangent line; however, the argumentation for this setting is analogous and does not alter the derivation of $w_0$.

%{\color{blue}
% The setting represented by Figure~\ref{fig:visualization_proof2} is identical to Figure~\ref{fig:w0}, with $P=\widetilde{x}^{(k-1)}$, $R=\widetilde{y}^{(k)}$, $Q=\widetilde{x}^{(k)}$. As a result, Lemma~\ref{lem:w_geometric} holds and $w_0$ is as defined in \eqref{eq:w0}.} Therefore, following our previous discussion, 
% $\sin\angle(O\widetilde{x}^{(k-1)}\widetilde{x}^{(k)}) \leq w_0$ for all possible $\widetilde{x}^{(k)}$, i.e., all points inside in the ball specified by Lemma~\ref{lemma:bound}, which proves  \eqref{eq:twoiterations}.   

Since $\|W\|<1$, the operator 
$q$ is a contraction, which implies, by the work of \cite{Toth2015}, that Anderson acceleration converges to the unique  fixed-point $x_*=0$.
%Further, because the point $x_*$ satisfies $Wx_* = x_*$ it follows $x_*$ is in the null-space of $W-I$, which is trivial; therefore, $x_*=0$.
%we know the sequence $\{\widetilde{x}^{(k)}\}$ converges to some $x_*$. 
%This follows by the work of Toth and Kelley \cite{Toth2015}, which proved Anderson acceleration will converge provided the operator is a contraction, and the fact there is only a single fixed-point, namely $x_*=0$.}
Using \eqref{eq:twoiterations} we then see the r-linear convergence factor is bounded by $\sqrt{w_0\|W\|}$. 
Observe that, from \eqref{eq:twoiterations}, it follows
\begin{multline}\label{eq:r_linear_factor}
\|\widetilde{x}^{(k)}\| \leq w_0 \|W\| \|\widetilde{x}^{(k-2)}\| \leq \left(w_0 \|W\|\right)^2 \|\widetilde{x}^{(k-4)}\| \\ \leq \hdots \leq \left( w_0 \|W\| \right)^{\lfloor (k-2)/2 \rfloor+1} \max\left(\|\widetilde{x}^{(0)}\|,\|\widetilde{x}^{(1)}\|  \right), 
\end{multline}
and so
\[
%\rho_{\{\widetilde{x}^{(k)}\}}\hspace{-0.05in} = 
\limsup_{k\rightarrow \infty} \|\widetilde{x}^{(k)}\|^{1/k} \leq \lim_{k\rightarrow \infty} \left( \left( w_0 \|W\| \right)^{\lfloor (k-2)/2 \rfloor+1} \max\left(\|\widetilde{x}^{(0)}\|,\|\widetilde{x}^{(1)}\|  \right)  \right)^{1/k} \hspace{-0.1in} = \sqrt{w_0 \|W\|}. 
\]
\textbf{Part II: proof of $w_0\leq \|W\|$ and the conditions under which $w_0 = \|W\|$.
%when its equality holds
} {To establish $w_0\leq \|W\|$ and the strict improvement on the convergence factor, we use an alternative geometric interpretation of $w_0$, which is independent of AA(m). 
Lemma~\ref{lemma:w0} presents the geometric interpretation and a visualization is provided in Figure~\ref{fig:visualization_proof2};
%defined in \eqref{eq:w0} as follows and visualized in Figure~\ref{fig:visualization_proof2}. 
%in a very different way 
%Lemma~\ref{lemma:w0} defines $w_0$ in an independent manner from AA(m), and we demonstrate its equivalence to the original definition \eqref{eq:w0}; 
the proof is deferred to Section~\ref{sec:proof_lemma_w0}.  % prove $w_0\leq \|W\|$, with equality holding if and only if $\lambda_{\max}(W)=-\lambda_{\min}(W)$.

\begin{lemma}\label{lemma:w0}
%Assume $P$ is an arbitrary point in the $xy$-plane with $y$-coordinate equal to $1$, $Q$ is a point on the 
Consider a circle centered at $C=(0,[\lambda_{\max}(W)+\lambda_{\min}(W)]/{2})$ with radius $[\lambda_{\max}(W)-\lambda_{\min}(W)]/{2}$, and let $O$ denote the origin. Then $w_0$, as defined in \eqref{eq:w0}, is the maximum value of $\sin\angle(OPQ)$ as $P$ traverses along the line $y=1$ and $Q$ traverses along the circle.
\end{lemma}

Using the geometric understanding in Lemma~\ref{lemma:w0}, we have 
\[
\sin \angle(OPQ)=\frac{\dist(O,L)}{|OP|},
\]
where $L$ denotes the line connecting $P$ and $Q$. The numerator and the denominator of its RHS are bounded by 
\begin{align}\label{eq:equality1}|OP|\geq& |OR|=1,\\\label{eq:equality2}\dist(O, L)\leq& |OC|+|CQ| =  \|W\|.\end{align} It follows that $\sin \angle(OPQ)\leq \|W\|$. Since $w_0$ is the largest possible $\sin \angle(OPQ)$, we have $w_0\leq \|W\|$. 

From the argument above,  $w_0 = \|W\|$ only when \eqref{eq:equality1} and \eqref{eq:equality2} are equalities, that is, $P=R$ and $O,C,Q$ are collinear. If $C\neq O$, then the collinearity of $O,C,Q$ implies that $Q$ must lie on the $y$-axis in Figure~\ref{fig:visualization_proof2}; however, this would violate the required tangency of $PQ$ with the circle. As a result, $w_0 = \|W\|$  will only occur when $O=C$, i.e., $\lambda_{\min}(W)+\lambda_{\max}(W)=0$. 

On the other hand, when $\lambda_{\min}(W)+\lambda_{\max}(W)=0$, we have $C=O$ in Figure~\ref{fig:visualization_proof2} and the circle has a radius of $\|W\|$. Let $P=R$, it follows that $\sin \angle(OPQ)=\|W\|$. Since $w_0\leq \|W\|$ and Lemma~\ref{lemma:w0} implies $\angle(OPQ)\leq w_0$, we proved $w_0=\|W\|$.

\begin{figure}[t]
\begin{center}
\includegraphics[scale=0.35]{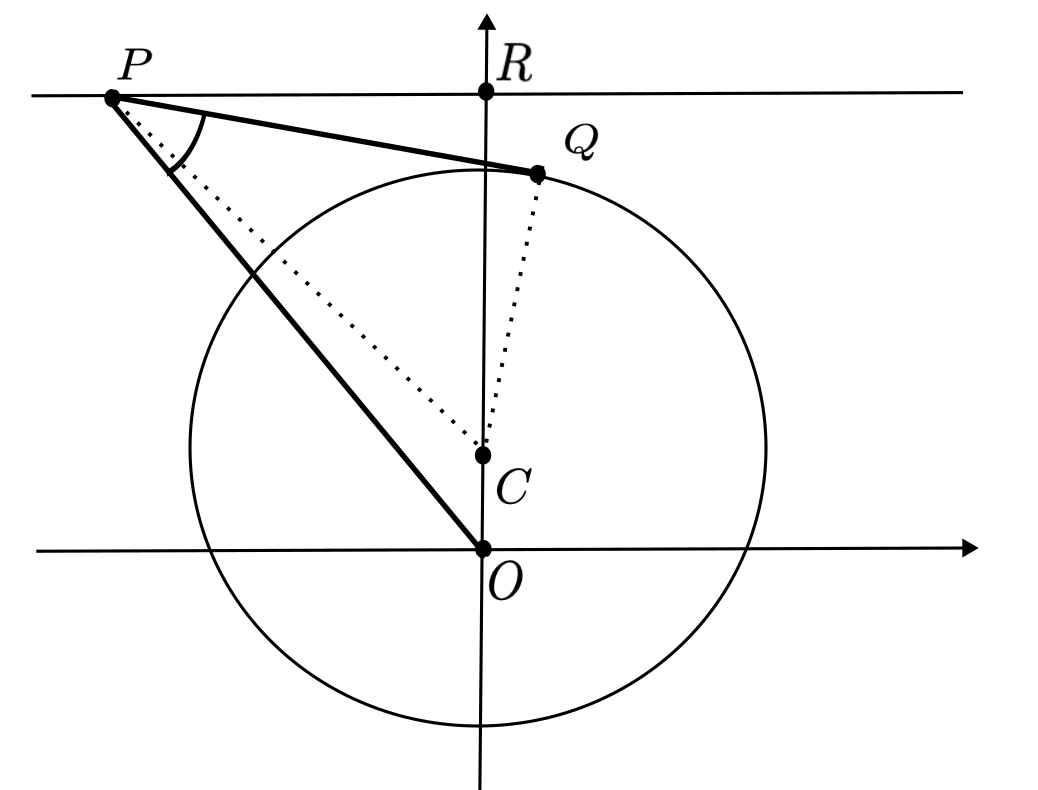}\caption{Geometric representation of $w_0$ as the sine of the largest possible angle $\angle(OPQ)$ formed as  $P$ traverses along the line $y=1$ and $Q$ traverses along the circle.}\label{fig:visualization_proof2}
\end{center}
\end{figure}

%\begin{remark}
%%\textbf{Comment:} 
%By analyzing \eqref{eq:geometric1} and \eqref{eq:geometric2}, $\sin \angle(O\widetilde{x}^{(k-1)}\widetilde{x}^{(k)}) = w_0$ is achieved when $\widetilde{x}^{(k)}$ lies in a 2-dimensional plane {\color{blue}through the origin containing $\widetilde{x}^{(k-1)}$ and $\widetilde{y}^{(k)}$, and is on the sphere centered at $C$ with radius $r$ such that the line connecting $\widetilde{x}^{(k-1)}$ and $\widetilde{x}^{(k)}$ is tangent to the sphere.} 
%{\color{red}(Note to self: We should make it clear why this follows from \eqref{eq:geometric1} and \eqref{eq:geometric2}, i.e., in \eqref{eq:geometric1} the upper bound can be made equality by choosing $\widehat{x}^{(k)}$ on the boundary of the sphere and in the same plane as $\widehat{x}^{(k-1)}$, $C$, and $O$.)}
%A visualization of this setting is depicted in Figure~\ref{fig:visualization_proof2}. 
%\end{remark}
%
\begin{remark}
We emphasize that, although the visualizations in Figures~\ref{fig:visualization_proof} and \ref{fig:visualization_proof2} have some similarities, they serve different purposes for distinct arguments. Figure~\ref{fig:visualization_proof} is used in part I of the proof and aims to establish \eqref{eq:twoiterations} by showing that $w_0$ is an upper bound of $\sin\angle(O\widetilde{x}^{(k-1)}\widetilde{x}^{(k)})$. 
 Figure~\ref{fig:visualization_proof2}, on the other hand, is used in part II of the proof and provides an alternative understanding of $w_0$ from a geometric perspective, independent of Anderson acceleration. This alternative perspective is easier to understand, enabling us to demonstrate a strict improvement in the convergence factor.
\end{remark}
\subsubsection{Proof of Lemma~\ref{lemma:w0}}\label{sec:proof_lemma_w0}
%\begin{proof}[Proof of Lemma~\ref{lemma:w0}]
We prove Lemma~\ref{lemma:w0} in two parts. In Part 1, we show that, for any $P$ and $Q$ defined in Lemma~\ref{lemma:w0}, $w_0 \geq \sin\angle(OPQ)$. In Part 2, we prove the existence of $P$ and $Q$ defined in Lemma~\ref{lemma:w0} such that $w_0 = \sin\angle(OPQ)$. The combination of these two parts establishes Lemma~\ref{lemma:w0}.

\vspace{0.1in}
\textbf{Part 1: ($w_0\geq \sin\angle(OPQ)$}). We first observe that the maximum $\angle (CPQ)$ is achieved when $PQ$ is tangent to the circle. Consequently, to prove that $w_0\geq \sin\angle(OPQ)$, it is sufficient to consider the case where $PQ$ is tangent to the circle.

Assume $|PQ|=1/\delta$, then we have
\[
\sin\angle (CPQ)\leq \frac{|CQ|}{|CP|}= \!\frac{|\lambda_{\max}(W)-\lambda_{\min}(W)|\delta}{\sqrt{\!4+\!\big(2\!-\!\lambda_{\max}(\!W\!)\!-\!\lambda_{\min}(W\!)\big)^2\!\delta^2}}
\]
and 
\[
\angle (OPC) = |\angle (OPR)-\angle (CPR)|= \Big|\tan^{-1}(\delta)-\tan^{-1}\Big(\big(1-\frac{\lambda_{\max}(W)+\lambda_{\min}(W)}{2}\big)\delta\Big)\Big|.
\]
Combining these inequalities with $\angle(OPQ)\leq \angle(OPC)+\angle(CPQ)$, which holds by the triangle inequality and our convention that angles are positive, we have $w_0\geq \sin\angle(OPQ)$.

\vspace{0.1in}
\textbf{Part 2: (Existence of $P$ and $Q$ such that $w_0 = \sin\angle(OPQ)$)}.
Assuming the supremum of the RHS of \eqref{eq:w0} is achieved at $\delta_*$, choose $P$ such that its distance to the $y$-axis is $1/\delta_*$. In addition, $Q$ is chosen such that $PQ$ is tangent to the circle and $C$ lies inside the cone formed by the vectors $\vec{PQ}$ and $\vec{PO}$. Then, all the equalities in Part 1 hold with $\delta=\delta_*$, and $\angle(OPQ)= \angle(OPC)+\angle(CPQ)$. Therefore,   
\begin{align*}
&\sin\angle(OPQ)\!=\! \sin(\angle(OPC)\!+\!\angle(CPQ))\!=\!\sin\!\Bigg[\!
\sin^{-1}\!\!\Big(\!\frac{|\lambda_{\max}(W)-\lambda_{\min}(W)|\delta_*}{\sqrt{\!4+\!\big(2\!-\!\lambda_{\max}(\!W\!)\!-\!\lambda_{\min}(W\!)\big)^2\!\delta_*^2}}\!\Big)\! \\
&+ \Big|\tan^{-1}(\delta_*)-\tan^{-1}\Big(\big(1-\frac{\lambda_{\max}(W)+\lambda_{\min}(W)}{2}\big)\delta_*\Big)\Big|\Bigg]=w_0.
\end{align*}
Thus, $w_0=\sin\angle(OPQ)$ when $P$ and $Q$ are chosen in this way. 

If $\delta_*$ is undefined, then the supremum is not attained at a finite $\delta$, meaning the value of $w_0$ is obtained when  $\delta\rightarrow\infty$, that is,  
$w_0=\frac{|\lambda_{\max}(W)-\lambda_{\min}(W)|}{|2\!-\!\lambda_{\max}(\!W\!)\!-\!\lambda_{\min}(W\!)|}$. We choose $P=R$ and choose $Q$ such that $PQ$ is tangent to the circle, then $\sin\angle(OPQ)=\frac{|CQ|}{|CR|}=\frac{r}{1-\frac{\lambda_{\max}(W)+\lambda_{\min}(W)}{2}}=w_0$.
%\end{proof}

\subsection{Proof of Proposition~\ref{cor:special}} Let $W = w{I}$ with $|w|<1$. We define $\widetilde{y}^{(k)}$ and $\widetilde{x}^{(k)}$ as discussed in the proof of Theorem~\ref{thm:main1} and assume without loss of generality that $\widetilde{y}^{(k)} = [0,1,0,\hdots,0]$ and that $\widetilde{x}^{(k-1)}$ lies on the hyperplane through the point $(0,1,0,\hdots,0)$ with normal vector $(0,1,0,\hdots,0)$. Given the geometric meaning of $w_0$ in Lemma~\ref{lemma:w0}, we know that the circle in Figure~\ref{fig:visualization_proof} has radius $0$ and is reduced to the  point $C=w \widetilde{y}^{(k)}$. Then Lemma~\ref{lemma:w0} states that  $w_0$ is equivalent the largest possible $\sin\angle(OPC)$, where $P$ lies on the line $y=1$ and $O$ is the origin. 

Let  $R=(0,1)$ and $|PR| = 1/\delta$ with $\delta>0$. 
%{\color{red}(TENG, when you change Prop. 3.2 please update $a$ to be $\delta$ as this was the earlier change in the paper.)}
Then, 
\[
\angle(OPC)=\angle(OPR)-\angle(CPR)=\tan^{-1}(\delta)-\tan^{-1}\left((1-\|W\|)\delta\right).
\]
Taking the derivative of this  function, its maximum is achieved when 
\[
\frac{1}{1+\delta^2}=\frac{(1-\|W\|)}{1+(1-\|W\|)^2\delta^2}\;
\text{, that is, }\; \delta=\frac{1}{\sqrt{1-\|W\|}}.
\]
Applying the trigonometric identities: $\sin(\theta_1 -\theta_2) = \sin(\theta_1)\cos(\theta_2) - \cos(\theta_1)\sin(\theta_2)$, $\sin\tan^{-1}(x)=\frac{x}{\sqrt{x^2+1}}$, and $\cos\tan^{-1}(x)=\frac{1}{\sqrt{x^2+1}}$, we see
%{\color{red}(Please add the trip identiy you leverage Teng.)} 
\begin{align}
\sin(\angle(OPC))&=\sin\left(\tan^{-1}\frac{1}{\sqrt{1-\|W\|}}\right)\cos(\tan^{-1}{\sqrt{1-\|W\|}}) \nonumber\\
&\;\;\;\;\;\;\;-\cos\left(\tan^{-1}\frac{1}{\sqrt{1-\|W\|}}\right)\sin(\tan^{-1}{\sqrt{1-\|W\|}}) \nonumber \\ 
&=\frac{1}{\sqrt{2-\|W\|}}\frac{1}{\sqrt{2-\|W\|}}-\frac{\sqrt{1-\|W\|}}{\sqrt{2-\|W\|}}\frac{\sqrt{1-\|W\|}}{\sqrt{2-\|W\|}}=\frac{\|W\|}{2-\|W\|}. \nonumber 
\end{align}
Thus, $w_0 =\|W\|/(2-\|W\|)$. 
\subsection{Proof of Proposition~\ref{cor:main}}

We split the proof into two parts. In the first part, we assume $m\geq 2$ and aim to prove that r-linear convergence factor of AA(m) is strictly smaller than $\|W\|$.
Following Theorem~\ref{thm:main1}, we only need to consider the case $\lambda_{\max}(W) = -\lambda_{\min}(W)$. We will prove that AA(m) has a strictly small convergence factor than $\|W\|$ % (a) 
by showing the existence of a constant $c<1$ such that, for all $k\geq 1$,
\begin{equation}\label{eq:contradiction1}
\|\widetilde{x}^{(k+2)}\| \leq c \|W\|^3\|\widetilde{x}^{(k-1)}\|,
\end{equation}
{which would imply a convergence factor of at most $c^{1/3}\|W\|$ by an argument similar to the one given in \eqref{eq:r_linear_factor}.
% for all $k\geq 1$ via contradiction. {\color{teal}That is, we will show that the existence of $k$ such that  \begin{equation}\label{eq:contradiction10}\|\widetilde{x}^{(k+2)}\| > c \|W\|^3\|\widetilde{x}^{(k-1)}\|, \end{equation} for any $c$ sufficiently close to $1$ implies a contradiction.} 
% \begin{equation}\label{eq:simplerate_argument}
% \max\Big(\frac{\|\widetilde{y}^{(k)}\|}{\|\widetilde{x}^{(k-1)}\|},\frac{\|\widetilde{x}^{(k)}\|}{\|W\|\|\widetilde{y}^{(k)}\|},
% \frac{\|\widetilde{y}^{(k+1)}\|}{\|\widetilde{x}^{(k)}\|},
% \frac{\|\widetilde{x}^{(k+1)}\|}{\|W\|\|\widetilde{y}^{(k+1)}\|},\frac{\|\widetilde{y}^{(k+2)}\|}{\|\widetilde{x}^{(k+1)}\|},
% \frac{\|\widetilde{x}^{(k+2)}\|}{\|W\|\|\widetilde{y}^{(k+2)}\|}\Big)\leq 1.
% \end{equation}

 If \eqref{eq:contradiction1} does not hold for all iterations $k\geq 1$, then, 
given any $c \in (0,1)$ there must exist an index $k$ such
that 
\begin{equation}\label{eq:contradiction10}
\|\widetilde{x}^{(k+2)}\| > c \|W\|^3\|\widetilde{x}^{(k-1)}\|. 
\end{equation}
We prove \eqref{eq:contradiction1} by showing \eqref{eq:contradiction10} must lead to a contradiction.

Following the definitions of $\widetilde{y}^{(k)}$ and $\widetilde{x}^{(k)}$ in Section~\ref{sec:reformulate_AA}, we have
$\|\widetilde{y}^{(k)}\|\leq \|\widetilde{x}^{(k-1)}\|$,\\
$
\|\widetilde{x}^{(k)}\|=\|W\widetilde{y}^{(k)}\|\leq \|W\|\|\widetilde{y}^{(k)}\|\leq  \|W\|\|\widetilde{x}^{(k-1)}\|, 
$
and 
\begin{align}\label{eq:simplerate_argument}
&\max\Big(\frac{\|\widetilde{y}^{(k)}\|}{\|\widetilde{x}^{(k-1)}\|},\frac{\|\widetilde{x}^{(k)}\|}{\|W\|\|\widetilde{y}^{(k)}\|},
\frac{\|\widetilde{y}^{(k+1)}\|}{\|\widetilde{x}^{(k)}\|}, \\
&\hspace{0.95in}\frac{\|\widetilde{x}^{(k+1)}\|}{\|W\|\|\widetilde{y}^{(k+1)}\|}, 
\frac{\|\widetilde{y}^{(k+2)}\|}{\|\widetilde{x}^{(k+1)}\|},
\frac{\|\widetilde{x}^{(k+2)}\|}{\|W\|\|\widetilde{y}^{(k+2)}\|}\Big)\leq 1. \nonumber 
\end{align}
Since \eqref{eq:contradiction10} implies that the product of these six terms is larger than $c$, each of the six terms is larger than $c$. 
That is, 
\begin{align}\label{eq:simplerate_argument2}
&\min\Big(\frac{\|\widetilde{y}^{(k)}\|}{\|\widetilde{x}^{(k-1)}\|},\frac{\|\widetilde{x}^{(k)}\|}{\|W\|\|\widetilde{y}^{(k)}\|},
\frac{\|\widetilde{y}^{(k+1)}\|}{\|\widetilde{x}^{(k)}\|},\\
&\hspace{0.95in}\frac{\|\widetilde{x}^{(k+1)}\|}{\|W\|\|\widetilde{y}^{(k+1)}\|},\frac{\|\widetilde{y}^{(k+2)}\|}{\|\widetilde{x}^{(k+1)}\|},
\frac{\|\widetilde{x}^{(k+2)}\|}{\|W\|\|\widetilde{y}^{(k+2)}\|}\Big)>c, \nonumber 
\end{align}
which implies for $i=0,1,2$, that
\begin{equation}\label{eq:contradiction2}
 \|\widetilde{y}^{(k+i)}\|> c\|\widetilde{x}^{(k-1+i)}\|,\,\,\,
\|W\widetilde{y}^{(k+i)}\|> c \|W\|\|\widetilde{y}^{(k+i)}\|.
\end{equation}
The first inequality in \eqref{eq:contradiction2} implies that
\begin{equation}\label{eq:nearx}
\frac{\|\widetilde{x}^{(k-1+i)}-\widetilde{y}^{(k+i)}\|}{\|\widetilde{y}^{(k+i)}\|}< \frac{\sqrt{1-c^2}}{c},
\end{equation}
which follows by the Pythagorean Theorem and the fact $\widetilde{x}^{(k-1+i)}-\widetilde{y}^{(k+i)}\perp \widetilde{y}^{(k+i)}$ from the definition of $\widetilde{y}^{(k+i)}$ in \eqref{eq:tildey_reformulation}.

The second inequality in \eqref{eq:contradiction2} implies that 
% when $c$ is chosen to be close to $1$, $\widetilde{y}^{(k+i)}$ approximately lies in the span of the eigenvectors of $W$ with eigenvalues $\pm\|W\|$. In addition, 
there exists $\widetilde{W}$ such that $\|\widetilde{W}\|=\|W\|$, $\widetilde{W}^2 =\|W\|^2I$, and 
\begin{equation}\label{eq:neary}
\frac{\|\widetilde{x}^{(k+i)}-\widetilde{W}\widetilde{y}^{(k+i)}\|}{\|{\widetilde{y}}^{(k+i)}\|}<2\|W\|\sqrt{\frac{1-c^2}{1-\beta^2}},
\end{equation}
where $\beta=\max_{i: \lambda_i(W)\neq \pm\|W\|}|\lambda_i(W)|/\|W\|$, and we set $\beta=0$ if all eigenvalues of $W$ are $\pm\|W\|$. We defer the proof of \eqref{eq:neary} to Section~\ref{sec:proof_neary}.

Combining \eqref{eq:nearx}, \eqref{eq:neary}, and $\|\widetilde{W}\|=\|W\|\leq 1$, we have
\begin{align*}\label{eq:help_prop3}
&\big\|\widetilde{x}^{(k+1)}-\|W\|^2\widetilde{x}^{(k-1)}\big\| =  \big\|\widetilde{x}^{(k+1)}-\widetilde{W}^2\widetilde{x}^{(k-1)}\big\|\nonumber \\
&\hspace{0.20in}\leq \|\widetilde{x}^{(k+1)}\hspace{-0.03in}- \widetilde{W}\widetilde{y}^{(k+1)}\| \hspace{-0.03in}+\hspace{-0.03in}\|\widetilde{W}\widetilde{y}^{(k+1)}\hspace{-0.03in}- \widetilde{W}\widetilde{x}^{(k)}\| \hspace{-0.03in}\nonumber \\
&\hspace{1.8in}+\|\widetilde{W}\widetilde{x}^{(k)}\hspace{-0.03in}- \widetilde{W}^2\widetilde{y}^{(k)}\|\hspace{-0.03in}+\hspace{-0.03in}\|\widetilde{W}^2\widetilde{y}^{(k)}- \widetilde{W}^2\widetilde{x}^{(k-1)}\|\nonumber\\
&\hspace{0.20in}\leq 2\|W\|\sqrt{\frac{1-c^2}{1-\beta^2}} \|{\widetilde{y}}^{(k+1)}\|+\frac{\sqrt{1-c^2}}{c}\|\widetilde{y}^{(k+1)}\|\nonumber \\
&\hspace{1.8in}+2\|W\|\sqrt{\frac{1-c^2}{1-\beta^2}} \|{\widetilde{y}}^{(k)}\|+\frac{\sqrt{1-c^2}}{c}\|\widetilde{y}^{(k)}\|.\nonumber \\
%&\hspace{0.00in}\leq 2\Big(2\|W\|\sqrt{\frac{1-c^2}{1-\beta^2}}+\frac{\sqrt{1-c^2}}{c}\Big)\|\widetilde{x}^{(k-1)}\|. 
\end{align*}
Applying the observation 
%where the last inequality applies the observation 
$\|\widetilde{x}^{(k+1)}\|\leq \|\widetilde{y}^{(k+1)}\|\leq\|\widetilde{x}^{(k)}\|\leq \|\widetilde{y}^{(k)}\|\leq \|\widetilde{x}^{(k-1)}\|$
to this inequality, we obtain
\begin{equation}\label{eq:help_prop33}
\big\|\widetilde{x}^{(k+1)}-\|W\|^2\widetilde{x}^{(k-1)}\big\|
\leq 2\Big(2\|W\|\sqrt{\frac{1-c^2}{1-\beta^2}}+\frac{\sqrt{1-c^2}}{c}\Big)\|\widetilde{x}^{(k-1)}\|.
\end{equation}
%from \eqref{eq:simplerate_argument}. 
The inverse triangle inequality applied to \eqref{eq:help_prop33} then yields 
\begin{align}\label{eq:help_2_prop33}
\|\widetilde{x}^{(k+1)}\|&\geq \|W\|^2\|\widetilde{x}^{(k-1)}\|-2\Big(2\|W\|\sqrt{\frac{1-c^2}{1-\beta^2}}+\frac{\sqrt{1-c^2}}{c}\Big)\|\widetilde{x}^{(k-1)}\| \nonumber \\
&= \left( \|W\|^2-2\Big(2\|W\|\sqrt{\frac{1-c^2}{1-\beta^2}}+\frac{\sqrt{1-c^2}}{c}\Big)\right)\|\widetilde{x}^{(k-1)}\|.
\end{align}
Assume $c$ is close enough to $1$ to ensure the RHS of \eqref{eq:help_2_prop33} is strictly positive. Note that, since the term in front of $\|\widetilde{x}^{(k-1)}\|$ converges to $\|W\|^2$ as $c$ approaches one, such $c$ exists.
Then, under this assumption on $c$, combining \eqref{eq:help_prop33} and \eqref{eq:help_2_prop33}, we have 
\begin{equation}\label{eq:contradiction4}
\frac{\|\widetilde{x}^{(k+1)}- \|W\|^2\widetilde{x}^{(k-1)}\|}{\|\widetilde{x}^{(k+1)}\|}\leq \frac{2\Big(2\|W\|\sqrt{\frac{1-c^2}{1-\beta^2}}+\frac{\sqrt{1-c^2}}{c}\Big)}{\|W\|^2-2\Big(2\|W\|\sqrt{\frac{1-c^2}{1-\beta^2}}+\frac{\sqrt{1-c^2}}{c}\Big)}.
\end{equation} 
We observe that the RHS of \eqref{eq:contradiction4} converges to zero as $c$ approaches one from the left.  
%converges to zero as $c$ becomes sufficiently close to $1$. 
By the definition of $\widetilde{y}^{(k+2)}$ in \eqref{eq:tildey_reformulation}, its norm is bounded by the norm of any affine combination of $\widetilde{x}^{(k+1)}$ and $\widetilde{x}^{(k-1)}$, so \eqref{eq:contradiction4} implies 
\begin{align*}
\frac{\|\widetilde{y}^{(k+2)}\|}{\|\widetilde{x}^{(k+1)}\|}&\leq 
\frac{\Big\|\frac{1}{1-\|W\|^2}\widetilde{x}^{(k+1)}+\frac{-\|W\|^2}{1-\|W\|^2}\widetilde{x}^{(k-1)}\Big\|}{\|\widetilde{x}^{(k+1)}\|} \nonumber \\
&\leq \frac{1}{{1-\|W\|^2}}\left(\frac{2\Big(2\|W\|\sqrt{\frac{1-c^2}{1-\beta^2}}+\frac{\sqrt{1-c^2}}{c}\Big)}{{\|W\|^2}-2\Big(2\|W\|\sqrt{\frac{1-c^2}{1-\beta^2}}+\frac{\sqrt{1-c^2}}{c}\Big)}\right). \nonumber 
\end{align*}
As $c$ approaches one from the left, the RHS of the above equation goes to zero, but this contradicts \eqref{eq:contradiction2}, which states  $\|\widetilde{y}^{(k+2)}\|/\|\widetilde{x}^{(k+1)}\|\geq c$. 
Thus, \eqref{eq:contradiction1} holds, and the first part of the argument is completed.

The second part of the proposition for the case $m=1$ is proved by an example. Let $|w|<1$, $W:=\text{Diag}(w,-w)$, $a:={0}$, $\widetilde{x}^{(0)}=[u,v]$ such that $(1-w)u^2=(1+w)v^2$. Then AA(1) applied to this problem generates the sequences $\widetilde{y}^{(k)}=\widetilde{x}^{(k-1)}$ and $\widetilde{x}^{(k)}=W\widetilde{y}^{(k)}$ for all $k\geq 2$. Therefore, AA(1) is reduced to the fixed-point method, and it converges $r$-linearly with convergence factor $\|W\|$. 

\subsubsection{Proof of \eqref{eq:neary}}\label{sec:proof_neary}
There are two cases to consider. 

\textbf{Case 1: All eigenvalues of $W$ are $\pm\|W\|$.}
In this case, we see that  \eqref{eq:neary} holds with $\widetilde{W}=W$ because the LHS of this equation is zero. 

\textbf{Case 2: Not all eigenvalues of $W$ are $\pm\|W\|$.}
Let $L_1$ and $L_2$ be the eigenspaces of $W$ with eigenvalues $\|W\|$ and $-\|W\|$, respectively. Let $P_{L}(x)$ denote the projection of the vector $x$ onto the subspace $L$, with projection matrix $P_L$ such that $P_L x = P_L(x)$. Let  $L^\perp$ represent the complementary space of $L$, and   $L_1\oplus L_2$ be the direct sum of $L_1$ and $L_2$. Let $\{\lambda_i(W)\}_{i=1}^n$ and $\{v_i(W)\}_{i=1}^n$ be the eigenvalues and eigenvectors of $W$.
Then, 
\begin{align*}
\|W \widetilde{y}^{(k+i)}\|^2=&\sum_{1\leq j\leq n} \lambda_j^2(W)|v_j(W)^T\widetilde{y}^{(k+i)}|^2\\\leq& \|W\|^2\|P_{L_1\oplus L_2}\widetilde{y}^{(k+i)}\|^2+\max_{j: \lambda_j(W)\neq \pm\|W\|}\lambda_j^2(W)\|P_{(L_1\oplus L_2)^\perp}\widetilde{y}^{(k+i)}\|^2.
\end{align*}
Combining this inequality with the second inequality in \eqref{eq:contradiction2} and the definition of $\beta$, we obtain
\[
\|P_{L_1\oplus L_2}\widetilde{y}^{(k+i)}\|^2+\beta^2\|P_{(L_1\oplus L_2)^\perp}\widetilde{y}^{(k+i)}\|^2> c^2\|\widetilde{y}^{(k+i)}\|^2. 
\]
Utilizing the Pythagorean theorem we obtain
\[\|\widetilde{y}^{(k+i)}\|^2-(1-\beta^2)\|P_{(L_1\oplus L_2)^\perp}\widetilde{y}^{(k+i)}\|^2> c^2\|\widetilde{y}^{(k+i)}\|^2.
\]
A quick rearrangement of terms then yields  
\begin{equation}\label{eq:nearL}
\|P_{(L_1\oplus L_2)^\perp}\widetilde{y}^{(k+i)}\|^2< \frac{1-c^2}{1-\beta^2}\|\widetilde{y}^{(k+i)}\|^2.
\end{equation}
Let $\widetilde{W}$ share the same eigenvectors with $W$ but modify its eigenvalues by replacing any value not equal to $-\|W\|$ with $\|W\|$. 
Thus, it follows $\|\widetilde{W}\|=\|W\|$, and $\widetilde{W}^2=\|W\|^2I$. 
Furthermore, noting 
\begin{align}
P_{L_1}=\sum_{1\leq i\leq n: \lambda_i(W)=\|W\|}v_i(W)v_i(W)^T,
\end{align}
and $P_{L_2}$ and $P_{L_3}$ can be represented similarly for indices $\{1\leq i\leq n: \lambda_i(W)=-\|W\|\}$ and $\{1\leq i\leq n: \lambda_i(W)\neq\pm\|W\|\}$, we see that 
\begin{align*}
\widetilde{W}=&\|W\|(P_{L_1}-P_{L_2}+P_{L_3})\\=&
\|W\|\Big(\sum_{i=1, \lambda_i(W)\neq -\|W\|}^nv_i(W)v_i(W)^T-\sum_{i=1, \lambda_i(W)=-\|W\|}^nv_i(W)v_i(W)^T\Big),\end{align*}
where $L_3=(L_1\oplus L_2)^\perp.$ 
%Then all eigenvalues of $\widetilde{W}$ are either $\|W\|$ or $-\|W\|$,  $\|\widetilde{W}\|=\|W\|$, and $\widetilde{W}^2=\|W\|^2I$. 
In addition,
\begin{align}\label{eq:nearL2}
W-\widetilde{W}&=(\|W\|P_{L_1}-\|W\|P_{L_2}+P_{L_3}WP_{L_3})-\|W\|(P_{L_1}-P_{L_2}+P_{L_3}) \\
&=P_{L_3}WP_{L_3}-\|W\|P_{L_3}, \nonumber 
\end{align}
and so
\begin{align*}
% \|\widetilde{x}^{(k+i)} -   \widetilde{W}\widetilde{y}^{(k+i)}\| &=
\| (W-\widetilde{W})\widetilde{y}^{(k+i)}\| 
=&\|(P_{L_3}WP_{L_3}-\|W\|P_{L_3})\widetilde{y}^{(k+i)}\| \nonumber \\
=&\|(P_{L_3}W\!-\!\|W\|I)P_{L_3}\widetilde{y}^{(k+i)}\|\\\leq& 2\|W\|\|P_{L_3}\widetilde{y}^{(k+i)}\| \nonumber 
\leq2\|W\|\|\widetilde{y}^{(k+i)}\|\sqrt{\frac{1-c^2}{1-\beta^2}},
\end{align*}
where the first equality applies \eqref{eq:nearL2}, the first inequality follows from the orthogonality of $P_{L_3}$, i.e., $\|P_{L_3}W\|\leq \|P_{L_3}\|\|W\|=\|W\|$, and the second inequality is a result of \eqref{eq:nearL}. %Noting $\|$
%To obtain these inequalities we rewrite $\widetilde{y}^{(k+i)}$ using the eigenvectors of $W$ and make use of the fact $\|a\|^2  = \|a_1\|^2 + \|a_2\|^2$ when $a = a_1 + a_2$ and $a_1 \perp a_2$. Thus, \eqref{eq:nearL} 
%shows $\widetilde{y}^{(k+i)}$ approximately lies in the span of the eigenvectors of $W$ with eigenvalues $\pm\|W\|$ and the right-hand side of\eqref{eq:nearL} goes to zero as $c$ approaches $1$.

%================================================================================================================

{
\subsection{Proof of Theorem~\ref{thm:AA_invariance}} We prove the result by induction. 
Let $\bar{x}\in \R^n$, then from the definitions of $q_1$ and $q_2$ we see that:
\begin{equation}\label{eq:help_in_1}
q_2(\bar{x}+b)  
%f(\bar{x}+b-b) + \bar{x} + b 
= q_1(\bar{x}) + b, 
\end{equation}
and
\begin{equation}\label{eq:help_in_2}
q_2(\bar{x} + b) - (\bar{x} +b) = q_1(\bar{x}) - \bar{x}. 
\end{equation}
Let $x^{(0)}\in \R^n$, and let $\{x_1^{(k)}\}$ and $\{x_2^{(k)}\}$ be the sequences generated by AA(m) initialized at $x_1^{(0)} = x^{(0)}$ and $x_2^{(0)} = x^{(0)}+b$ respectively. By \eqref{eq:help_in_1}, we observe  that $x_2^{(1)} = x_1^{(1)} + b$.
From \eqref{eq:aa_optimization} and \eqref{eq:help_in_2},  when $k=1$, the optimal $\alpha$ values for \eqref{eq:aa_optimization} are  identical for the two sequences. 
Thus, letting $\alpha_0$ and $\alpha_1$ be the optimal solution to \eqref{eq:aa_optimization} (for both sequences), we observe that 
\begin{align}
x_2^{(2)} &= \alpha_0\; q_2(x_2^{(0)}) + \alpha_1\; q_2(x_2^{(1)}) \nonumber \\
&= \alpha_0\; q_2(x_1^{(0)} +b) + \alpha_1\;  q_2(x_1^{(1)}+b) \nonumber \\
&= \alpha_0\left( q_1(x^{(0)}) + b\right) + \alpha_1 \left( q_1(x_1^{(1)}) + b \right) \nonumber \\
&=\alpha_0\; q_1(x^{(0)}) + \alpha_1\; q_1(x_1^{(1)}) + (\alpha_0 + \alpha_1)b  \nonumber \\
&= x_1^{(2)} + b, \nonumber 
\end{align}
where first equality follows from Step 2 of Algorithm~\ref{alg:aa}, the third equality uses \eqref{eq:help_in_1}, and the last uses the fact $\alpha_0 + \alpha_1 = 1$. 
So, $x_2^{(k)} = x_1^{(k)} + b$ holds for $k=0,1,2$, which establishes the base step for our inductive argument.

Now, assume $x_2^{(k)} = x_1^{(k)} + b$ for $k=1,\cdots,K$ with $K\geq 2$. Then \eqref{eq:help_in_2} ensures that the optimal solutions to \eqref{eq:aa_optimization}, i.e., $\{\alpha_j^{(k)}\}_{k=K-m_K}^K$, are identical when $k=K$ for the two sequences. 
Following our prior argument, we see 
\begin{align}
x_2^{(K+1)}&= \sum_{j=K-m_k}^{K} \alpha_j^{(K)}q_2(x_2^{(j)}) \nonumber \\ 
&= \sum_{j=K-m_k}^{K} \alpha_j^{(K)}q_2(x_1^{(j)}+b) \nonumber \\
&= \sum_{j=K-m_k}^{K} \alpha_j^{(K)}(q_1(x_1^{(j)}) + b) \nonumber \\
&= \sum_{j=K-m_k}^{K} \alpha_j^{(K)}q_1(x_1^{(j)}) + \left( \sum_{j=K-m_k}^{K} \alpha_j^{(K)}\right)b \nonumber \\
&=x_1^{(K+1)} + b,\nonumber 
\end{align}
and $x_2^{(k)} = x_1^{(k)} + b$ holds for $k=K+1$. Then, by induction,  Theorem~\ref{thm:AA_invariance} is proved.
}

\subsection{Proof of Theorem~\ref{thm:main2}}
We divide the proof into three sections. 
First, we present a reformulation of Anderson acceleration, similar to the approach in Section~\ref{sec:reformulate_AA}, along with its key properties. 
Second, we establish the convergence of AA(m). 
Finally, we determine its convergence factor, thus completing the proof. 
%and prove Theorem~\ref{thm:main2}.
\vspace{0.1in}

\textbf{Part 1: A reformulation of Anderson acceleration and its key  properties.} 
First, without loss of generality, assume that the fixed point $x_*$ is $0$. If $x_*\neq 0$, then $0$ is a fixed point of $\widetilde{q}(x)=f(x+x_*)+x$, where $f(x)=q(x)-x$.  By applying Theorem \ref{thm:AA_invariance}, the sequences generated by AA(m), when applied to $\widetilde{q}(x)$ and $q(x)$, are equivalent up to a translation when initialized appropriately. Consequently, they exhibit the same convergence behavior. 
Thus, establishing the convergence factor of  $q$ is equivalent to establishing the convergence factor of AA(m) applied to $\widetilde{q}$. Therefore, it is sufficient to assume that the fixed point $x_*$ is $0$.

Second, instead of $x^{(k)}$, we consider $\widetilde{x}^{(k)}$, which is similar to the reformulation in Section \ref{sec:reformulate_AA} and is  defined as follows: 
\[
\widetilde{x}^{(k)}:=q(x^{(k)})-x^{(k)}.
\]
By Step 2 of Algorithm~\ref{alg:aa} and our modification at \eqref{eq:AA_minimize1}, the update formula of $\widetilde{x}^{(k)}$ can be written as: 
\begin{align}
\widetilde{x}^{(k+1)}&=(q-I)x^{(k+1)} \nonumber \\
&=(q-I)\big(\sum_{j=k-m_k}^{k}\alpha_j^{(k)} q(x^{(j)})\big) \nonumber \\
&=(q-I)\Big(\sum_{j=k-m_k}^{k}\alpha_j^{(k)}q\left((q-I)^{-1}(\widetilde{x}^{(j)})\right)\Big),\label{eq:update_nonlinear2}
\end{align}
where $\{\alpha_j^{(k)}\}_{j=k-m_k}^k$ is chosen such that it solves 
\[
\min\Big\|\sum_{j=k-m_k}^{k}\alpha_j^{(k)}\widetilde{x}^{(j)}\Big\|,\,\,\text{subject to }\sum_{j=k-m_k}^{k} \alpha_j^{(k)}=1, |\alpha_j^{(k)}|\leq C_0.
\]

Unlike the linear setting \eqref{eq:reformulation_linear_update}, where $q$ is a linear operator, this update formula  cannot be simplified to remove the expression $(q-I)^{-1}$. To facilitate the analysis, we define
\[\widetilde{y}^{(k+1)}:=\sum_{j=k-m_k}^{k}\alpha_j^{(k)}\widetilde{x}^{(j)}%=\sum_{j=k-m_k}^{k}\alpha_j^{(k)}\big(q(x^{(j)})-x^{(j)}\big)
\]
  and state the properties (P1) and (P2).  Property (P1)  establishes the relationship between  $\|\widetilde{y}^{(k+1)}\|$ and $ \|\widetilde{x}^{(k)}\|$, while  property (P2) extends the result $\widetilde{x}^{(k+1)}=q(\widetilde{x}^{(k+1)})=W\widetilde{y}^{(k+1)}$ in the linear setting by stating that their difference is of second-order magnitude. Their proofs are deferred to Section~\ref{sec:intermediate}. \vspace{0.05in}

\begin{enumerate}
\item[(P1)]  If $C_0\geq 1$, 
 then $\|\widetilde{y}^{(k+1)}\|\leq \|\widetilde{x}^{(k)}\|$ with  $\angle(O \widetilde{y}^{(k+1)} \widetilde{x}^{(k)} )\geq \pi/2$. 
\vspace{0.05in}

\item[(P2)]  There exists $c_1',M>0$ such that if $\max_{k-m_k\leq j\leq k}\|{\widetilde{x}}^{(j)}\|\leq c_1'$, then
\[
\|\widetilde{x}^{(k+1)}-W\widetilde{y}^{(k+1)}\| \leq   M\big(\max_{k-m_k\leq j\leq k}\|{\widetilde{x}}^{(j)}\|^2\big).
\]
%\label{eq:sphere}
\end{enumerate}
\vspace{0.1in}

\textbf{Part 2: Proof of local convergence.} We now prove AA(m) converges when the initialization is sufficiently close to $x_*=0$, in the sense that $\widetilde{x}^{(k)}$ converges to zero. 
Define the sequence $\{d^{(k)}\}_{k\geq 1}$ by 
\begin{equation}\text{$d^{(1)}=\|\widetilde{x}^{(1)}\|$ 
and $d^{(k+1)}=\|W\| d^{(k)}+ M(\max_{k-m_k\leq j\leq k}(d^{(k)})^2)$ for $k\geq 1$.}\label{eq:sequence_d}\end{equation} 
First, we claim that if 
\begin{equation}
d^{(1)}\leq \min\left(c_1, c_1', \frac{1-\|W\|}{2M}\|W\|^{m}\right),\label{eq:initial_d}
\end{equation} 
then, {for all $k\geq 1$}, we have 
\begin{equation}\label{eq:induction_d2}
\|W\|d^{(k)}\leq d^{(k+1)}\leq \frac{\|W\|+1}{2}d^{(k)}. %\text{\color{red}for $k \geq 1$.}
\end{equation} 
The inductive argument proving  \eqref{eq:induction_d2} is as follows. {For the base case $k=1$, from \eqref{eq:sequence_d} we have $d^{(2)}=(\|W\|+Md^{(1)})d^{(1)}$, so \eqref{eq:induction_d2} holds if $d^{(1)}\leq (1-\|W\|)/2M$, as guaranteed by \eqref{eq:initial_d}.}

With the base case established, assume \eqref{eq:induction_d2} holds when $k=1,\cdots,K-1$. Then, we will prove it for $k=K$, which follows  because 
$\|W\|d^{(K)}\leq d^{(K+1)}$ and 
\begin{align*}
d^{(K+1)}=&\|W\| d^{(K)}+ M(\max_{K-m_K\leq j\leq K}(d^{(K)})^2) \\
=& \|W\| d^{(K)}+ M(d^{(K-m_K)})^2\\
\leq &\|W\| d^{(K)}+  M\Big(\frac{d^{(K)}}{\|W\|^{m_K}} d^{(1)}\Big) \\
=& \Big(\|W\|+\frac{Md^{(1)}}{\|W\|^{m_K}}\Big)d^{(K)}\\
\leq& \frac{\|W\|+1}{2}d^{(K)},
\end{align*}
where the second equality is a result of the inductive hypothesis, 
%to see $\max_{K-m_K\leq j\leq K}d^{(K)\,2}=d^{(K-m_K)\,2}$, 
%which follows from  \eqref{eq:induction_d2} for smaller $K$; 
the first inequality follows by \eqref{eq:induction_d2} for $1\leq k\leq K-1$ to see $d^{(K)}\geq \|W\|^{m_K}d^{(K-m_K)}$ and $d^{(K-m_K)}\leq d^{(1)}$, and the last inequality applies \eqref{eq:initial_d}, $m\geq m_K$, and $\|W\|\leq 1$. This completes the proof of \eqref{eq:induction_d2}.

{

%where the first inequality follows from (P2), the second from (P3), and the final inequality from (P1). 

Next, we use another proof by induction to show that, when  \eqref{eq:initial_d} holds, \begin{equation}\label{eq:induction_d0}\text{$\|\widetilde{x}^{(k)}\| \leq d^{(k)}$ for all $k\geq 1$}.\end{equation} 
The inductive argument for \eqref{eq:induction_d0} is as follows. First, \eqref{eq:induction_d0} holds for the base case $k=1$, {by the definition of $d^{(1)}$}. Assume \eqref{eq:induction_d0} holds for all $k$ up to some $k_0 \geq 1$. 
We now prove \eqref{eq:induction_d0} holds for $k=k_0+1$: applying 
 (P1) and (P2) we have that when $\max_{k-m_k\leq j\leq k}\|{\widetilde{x}}^{(j)}\|\leq c_1'$, 
\begin{align}\label{eq:help_pf_eq}
\| \widetilde{x}^{(k+1)}\| \leq& \|W\widetilde{y}^{(k+1)}\|+  M\big(\max_{k-m_k\leq j\leq k}\|{\widetilde{x}}^{(j)}\|^2\big) \nonumber \\
\leq& \|W\| \|\widetilde{y}^{(k+1)}\|+  M\big(\max_{k-m_k\leq j\leq k}\|{\widetilde{x}}^{(j)}\|^2\big) \nonumber \\
\leq& \|W\| \|\widetilde{x}^{(k)}\|+  M\big(\max_{k-m_k\leq j\leq k}\|{\widetilde{x}}^{(j)}\|^2\big).
%&\overset{(P2)}{\leq} \|q(\widetilde{\by}^{(k+1)})\| + \big(\max_{k-m_k\leq j\leq k}\|{\widetilde{\bx}}^{(j)}\|^2\big)C_7 \nonumber \\
%&\overset{(P3)}{\leq} (\|\bW\|+C_2\|\widetilde{\by}^{(k+1)}\|)\|\widetilde{\by}^{(k+1)}\|+ \big(\max_{k-m_k\leq j\leq k}\|{\widetilde{\bx}}^{(j)}\|^2\big)C_7 \nonumber \\
%&\overset{(P1)}{\leq} (\|\bW\|+C_2\|\widetilde{\bx}^{(k)}\|)\|\widetilde{\bx}^{(k)}\|+ \big(\max_{k-m_k\leq j\leq k}\|{\widetilde{\bx}}^{(j)}\|^2\big)C_7.
\end{align}
Observe that the assumption {$\max_{{k_0}-m_{k_0}\leq j\leq {k_0}}\|{\widetilde{x}}^{(j)}\|\leq c_1'$} holds because of \eqref{eq:initial_d} and the fact that $d^{(k)}$ is nonincreasing, which follows from \eqref{eq:induction_d2}.
Thus, 
\begin{align*}
\|\widetilde{x}^{(k_0+1)}\|\leq &\|W\| \|\widetilde{x}^{(k_0)}\|+  M\big(\max_{k_0-m_{k_0}\leq j\leq k_0}\|{\widetilde{x}}^{(j)}\|^2\big)
\\\leq& \|W\|d^{(k_0)}+M(\max_{k_0-m_{k_0}\leq j\leq k_0}(d^{(k_0)})^2)=d^{(k_0+1)}.
\end{align*}
}
This completes the proof of \eqref{eq:induction_d0}. 

Combining \eqref{eq:induction_d2} with \eqref{eq:induction_d0}, the convergence of $\|\widetilde{x}^{(k)}\|$ to zero is proved.%, and therefore we proved the convergence of ${x}^{(k)}=(q-I)^{-1}\widetilde{x}^{(k)}$ to zero.
\vspace{0.1in}

\textbf{Part 3: Proof of the local convergence factor.} {In the third and final part of the proof, we show that the $r$-linear convergence factor of $\widetilde{x}^{(k)}$ is bounded above by $\sqrt{w_0\|W\|}$.} 
To accomplish this, it is sufficient to demonstrate that for any small $\epsilon>0$, there exists a sufficiently large $K_\epsilon$ such that for any $k\geq K_\epsilon$ there exists $k-(m+1)\leq k_1\leq k$ such that
\begin{equation}\label{eq:ratio3}
\frac{\|\widetilde{x}^{(k+1)}\|}{\|\widetilde{x}^{(k_1)}\|}\leq \Big((1+\epsilon)w_0\|W\|\Big)^{(k+1-k_1)/2}.
\end{equation}
This implies that the r-linear {convergence factor} is bounded above by $\sqrt{w_0\|W\|(1+\epsilon)}$: Using a similar argument as in \eqref{eq:r_linear_factor}, \eqref{eq:ratio3} implies that, for any $k>K_\epsilon$, there exists a sequence $k=k_0>k_1>\cdots>k_l> K_\epsilon\geq k_{l+1}$ such that for all $0\leq i\leq l$,  $1\leq k_i-k_{i+1}\leq m+2$ and 
\[
\frac{\|\widetilde{x}^{(k_i)}\|}{\|\widetilde{x}^{(k_{i+1})}\|}\leq \Big((1+\epsilon)w_0\|W\|\Big)^{(k_i-k_{i+1})/2}.
\]
As a result, when $\epsilon$ is chosen such that $(1+\epsilon)w_0\|W\|<1$, which is possible because $\|W\|<1$ and $w_0\leq \|W\|$, we have that
\begin{align*}
\|\widetilde{x}^{(k)}\| \leq& \Big((1+\epsilon)w_0 \|W\| \Big)^{(k-k_{l+1})/2}  
\|\widetilde{x}^{(k_{l+1})}\|\\
\leq& 
\Big( (1+\epsilon)w_0 \|W\| \Big)^{(k-K_\epsilon)/2 } \max\left(\|\widetilde{x}^{(K_\epsilon-m-1)}\|,\cdots,\|\widetilde{x}^{(K_\epsilon-1)}\|,\|\widetilde{x}^{(K_\epsilon)}\|  \right),
\end{align*}
where the last inequality applies $(1+\epsilon)w_0\|W\|<1$ and $k_{l+1}\geq K_{\epsilon}-m-1$.
Thus, the convergence factor of the sequence $\widetilde{x}^{(k)}$ is upper bounded by $\sqrt{w_0\|W\|(1+\epsilon)}$, and 
as we may choose $\epsilon$ as small as possible, the theorem will be proved provided we establish \eqref{eq:ratio3}. 

To prove \eqref{eq:ratio3}, we first assume %\vspace{0.075in}
\begin{equation}\label{eq:ratio3_assumption}
\max_{k-m-1\leq j\leq k}\|\widetilde{x}^{(j)}\|\leq (w_0\|W\|)^{-(m+2)/2}\|\widetilde{x}^{(k+1)}\|,
\end{equation}
since otherwise we would have $\max_{k-m-1\leq j\leq k}\|\widetilde{x}^{(j)}\|> (w_0\|W\|)^{-(m+2)/2}\|\widetilde{x}^{(k+1)}\|$ and  \eqref{eq:ratio3} holds immediately with $k_1=\arg\max_{k-m-1\leq j\leq k}\|\widetilde{x}^{(j)}\|$; i.e.,
\[
\frac{\|\widetilde{x}^{(k+1)}\|}{\|\widetilde{x}^{(k_1)}\|}=\frac{\|\widetilde{x}^{(k+1)}\|}{\max_{k-m-1\leq j\leq k}\|\widetilde{x}^{(j)}\|}< (w_0\|W\|)^{(m+2)/2}\leq(w_0\|W\|)^{(k+1-k_1)/2}.
\]
%Note that $m_k'\leq m+1$ by definition, so $k-(m+1)\leq k_1\leq k$. %\vspace{0.075in}
Assuming \eqref{eq:ratio3_assumption} holds and utilizing (P2), it follows that %({\color{blue} Note, the index range for $j$ in the max expression below does not match (P2). This does not really matter but I wanted to note it.})
\begin{align}\label{eq:tildexk+1}
\|\widetilde{x}^{(k+1)}\| &\leq \|W\|\|\widetilde{y}^{(k+1)}\|+M(\max_{k-m_k\leq j\leq k}\|\widetilde{x}^{(j)}\|^2) \nonumber \\
&\leq \|W\|\|\widetilde{y}^{(k+1)}\|+M(w_0\|W\|)^{-(m+2)}\|\widetilde{x}^{(k+1)}\|^2. 
\end{align}
Letting $\epsilon_{k+1}:=M(w_0\|W\|)^{-(m+2)}\|\widetilde{x}^{(k+1)}\|$, we note  \eqref{eq:tildexk+1} is equivalent to 
\begin{align}\label{eq:tildexk+12}
\|\widetilde{x}^{(k+1)}\|\leq \frac{\|W\|}{1-\epsilon_{k+1}}\|\widetilde{y}^{(k+1)}\|.
\end{align}
Next, let $\widehat{y}^{(k+1)}$ denote the projection of the origin onto the line connecting $\widetilde{x}^{(k-1)}$ and $\widetilde{x}^{(k)}$; a visualization is shown in Figure~\ref{fig:visualization_proof4}. 
Now, we introduce two intermediates results: there exists $K_\epsilon>0$ such that for all $k>K_\epsilon$ and $C_0>C_0'$, 
\begin{equation}\label{eq:adaptproof1}
\|\widetilde{y}^{(k+1)}\|\leq \|\widehat{y}^{(k+1)}\|,\end{equation}  
and
\begin{equation}\label{eq:adaptproof2}
\|\widehat{y}^{(k+1)}\| \leq  w_0\Big(1+\frac{2{\|W\|\epsilon_{k+1}}}{(\lambda_{\max}(W)-\lambda_{\min}(W))({1-\epsilon_{k+1}})}\Big)\|\widetilde{x}^{(k-1)}\|.
\end{equation}
The proof of \eqref{eq:adaptproof1} and \eqref{eq:adaptproof2} are deferred to Section~\ref{sec:intermediate}. We note that they are adaptations of \eqref{eq:twoiterations2} and \eqref{eq:twoiterations3} in the proof of Theorem~\ref{thm:main1}.

Finally, by combining \eqref{eq:tildexk+12}, \eqref{eq:adaptproof1} and  \eqref{eq:adaptproof2},
% \[
% \|\widetilde{\bx}^{(k+1)}\|\leq \|\bW\|\|\widetilde{\by}^{(k+1)}\|+O(\max_{k-m_k\leq j\leq k}\|{\bx}^{(j)}\|^2)\leq \|\bW\|\|\widetilde{\by}^{(k+1)}\|+O(\|\widetilde{\bx}^{(k+1)}\|^2),
% \]
we have
\begin{equation}\label{eq:ratio2}
\frac{\|\widetilde{x}^{(k+1)}\|}{\|\widetilde{x}^{(k-1)}\|}\leq \frac{w_0\|W\|\Big(1+\frac{2{\|W\|\epsilon_{k+1}}}{(\lambda_{\max}(W)-\lambda_{\min}(W))({1-\epsilon_{k+1}})}\Big)}{1-\epsilon_{k+1}}.
\end{equation}
Previously in Part 2, we proved that  $\|\widetilde{x}^{(k)}\|$ converges to zero. Thus, for sufficiently large $k$, $\epsilon_{k+1}\rightarrow 0$ and \eqref{eq:ratio3} holds with $k_1=k-1$, completing the argument. 

\subsubsection{Proof of auxiliary  results}\label{sec:intermediate} 
In this section, we prove all the intermediate results necessary for the proof of Theorem~\ref{thm:main2}.

\vspace{0.1in}
{\bf Proof of Property (P1).} The definition of $\widetilde{y}^{(k+1)}$ in the modified AA(m) can be written as %implies%$\widetilde{y}^{(k+1)}$ is the closest point to the origin for the set
\begin{equation}\label{eq:nonlinear_reformulation}
\widetilde{y}^{(k+1)}=\argmin_{ y\in\mathcal{S}_k}\|y\|,\,\,\text{where}\,\,\mathcal{S}_k:=\Big\{\sum_{j=k-m_k}^{k}\alpha_j\widetilde{x}^{(j)} \;\bigg|\; \sum_{j=k-m_k}^{k} \alpha_j=1,\; \|\alpha\|_{\infty}<C_0, \Big\}.%\nonumber%_{\sum_{j=k-m_k}^{k} \alpha_j=1, |\alpha_j|<C_0}. \nonumber 
\end{equation}
Since $C_0\geq 1$, we have $\widetilde{x}^{(k)}\in \mathcal{S}_k$, as it corresponds to $\alpha_k=1$ and $\alpha_j=0$ for all $j<k$. It follows that $\|\widetilde{y}^{(k+1)}\|\leq \|\widetilde{x}^{(k)}\|$, thereby establishing the first part of (P1). 

We prove the second part of $\angle(O \widetilde{y}^{(k+1)} \widetilde{x}^{(k)} )\geq \pi/2$ by contradiction. Otherwise, we have 
 $\angle(O \widetilde{y}^{(k+1)} \widetilde{x}^{(k)} )<\pi/2$, then $\langle\widetilde{y}^{(k+1)},\widetilde{y}^{(k+1)}-\widetilde{x}^{(k)}\rangle>0$. 
For small $\zeta>0$, $\widehat{y}=\zeta\widetilde{x}^{(k)}+(1-\zeta)\widetilde{y}^{(k+1)}$ would have a smaller norm than $\widetilde{y}^{(k+1)}$ because
\[
\|\widehat{y}\|^2=\|\widetilde{y}^{(k+1)}\|^2+2\zeta\langle\widetilde{y}^{(k+1)},\widetilde{x}^{(k)}-\widetilde{y}^{(k+1)}\rangle + \zeta^2\|\widetilde{x}^{(k)}-\widetilde{y}^{(k+1)}\|^2.%<\|\widetilde{y}^{(k+1)}\|^2.
\]
Since $\mathcal{S}_k$ is a convex set that contains both $\widetilde{x}^{(k)}$ and $\widetilde{y}^{(k+1)}$, $\widehat{y}$ lies in $\mathcal{S}_k$. This contradicts the definition of $\widetilde{y}^{(k+1)}$ in  \eqref{eq:nonlinear_reformulation}; therefore, $\angle(O \widetilde{y}^{(k+1)} \widetilde{x}^{(k)} )\geq \pi/2$, proving the second aspect of (P1).

%Note, $\angle(O \widetilde{y}^{(k+1)} \widetilde{x}^{(k)})\geq \pi/2$ instead of $\pi/2$ by the boundedness of the $\alpha_j^{(k)}$.

\vspace{0.1in}
\textbf{Proof of Property (P2).}  The second property (P2) is proved using \eqref{eq:update_nonlinear2} and the first-order Taylor approximation of $q$ at $x_*=0$.
%$q(x)\approx Wx$ for  $x\approx 0$. 
The proof also utilizes big-$\mathcal{O}$ notation, where $f=\mathcal{O}(g)$ denotes that $f$ is of the order of $g$, that is, there exists $M>0$ such that $|f|\leq M|g|$. 
Furthermore, we note that our assumptions \eqref{eq:assumption_main2} and \eqref{eq:assumption_main3} imply that  $q, (q-I)$, and $(q-I)^{-1}$ are all locally Lipschitz continuous and approximated by their first-order Taylor expansions at $x_*=0$. 
Thus, 
%with an error on the order of $\|x-x_*\|^2$ and the $\alpha^{(k)}_j$'s are bounded, the second point is proved. To be more precise, from \eqref{eq:update_nonlinear2} and ....%the facts $(q-I)(x)=(W-I)x+\mathcal{O}(\|x\|^2)$ and $(q-I)^{-1}(x)=(W-I)^{-1}x+\mathcal{O}(\|x\|^2)$, and $q(x)-q(y)= \mathcal{O}(\|x-y\|)$ (here $a=\mathcal{O}(b)$ means that the equation holds for $a=Cb$ holds some bounded constants $C$), we see that
\begin{align}
\widetilde{x}^{(k+1)} &= (q-I)\Big(\sum_{j=k-m_k}^{k}\alpha_j^{(k)}q\Big[(q-I)^{-1}(\widetilde{x}^{(j)})\Big]\Big) \nonumber \\ 
&= (q-I)\Big(\sum_{j=k-m_k}^{k}\alpha_j^{(k)}q\Big[(W-I)^{-1}\widetilde{x}^{(j)}+\mathcal{O}(\|\widetilde{x}^{(j)}\|^2)\Big]\Big) \nonumber \\ 
&= (q-I)\Big(\sum_{j=k-m_k}^{k}\alpha_j^{(k)}q\Big[(W-I)^{-1}\widetilde{x}^{(j)}\Big]+\mathcal{O}(\|\widetilde{x}^{(j)}\|^2)\Big) \nonumber \\
&= (q-I)\Big(\sum_{j=k-m_k}^{k}\alpha_j^{(k)}W\Big[(W-I)^{-1}\widetilde{x}^{(j)}\Big]+\mathcal{O}(\|\widetilde{x}^{(j)}\|^2)\Big) \nonumber \\
&= (q-I)\Big(\sum_{j=k-m_k}^{k}\alpha_j^{(k)}W\Big[(W-I)^{-1}\widetilde{x}^{(j)}\Big]\Big)+\mathcal{O}(\max_{k-m_k\leq j\leq k}\|\widetilde{x}^{(j)}\|^2)  \nonumber \\\nonumber
&= (W-I)\Big(\sum_{j=k-m_k}^{k}\hspace{0.0in}\alpha_j^{(k)}W\Big[(W-I)^{-1}\widetilde{x}^{(j)}\Big]\Big)+\mathcal{O}(\max_{k-m_k\leq j\leq k}\|\widetilde{x}^{(j)}\|^2)\\&\,\,\,\,\,\,+\mathcal{O}\Big(\Big\|\sum_{j=k-m_k}^{k}\alpha_j^{(k)}W\Big[(W-I)^{-1}\widetilde{x}^{(j)}\Big]\Big\|^2\Big)  \nonumber \\\nonumber
&=\hspace{-0.1in}\sum_{j=k-m_k}^{k}\hspace{-0.1in}\alpha_j^{(k)}W\widetilde{x}^{(j)}\hspace{-0.05in}+\mathcal{O}(\max_{k-m_k\leq j\leq k}\|\widetilde{x}^{(j)}\|^2)=W \widetilde{y}^{(k+1)} + \mathcal{O}\Big(\max_{k-m_k\leq j\leq k}\|\widetilde{x}^{(j)}\|\Big)^2, 
% \\ 
% &= (q-I) \Big(\sum_{j=k-m_k}^{k}\alpha_j^{(k)}\left[ W(q-I)^{-1}(\widetilde{x}^{(j)}) + \mathcal{O}(\|\widetilde{x}^{(j)}\|^2)\right]\Big) \nonumber \\  
% &= (q-I) \Big(\sum_{j=k-m_k}^{k}\alpha_j^{(k)}\left[ W(W-I)^{-1}\widetilde{x}^{(j)} + \mathcal{O}(\|\widetilde{x}^{(j)}\|^2)\right]\Big) \nonumber 
% \\ 
% &= (W-I) \Big(\sum_{j=k-m_k}^{k}\alpha_j^{(k)}\left[ W(W-I)^{-1}\widetilde{x}^{(j)} + \mathcal{O}(\|\widetilde{x}^{(j)}\|^2)\right]\Big)\nonumber \\
% &\hspace{0.5in}+\mathcal{O}\Big(\Big\|\sum_{j=k-m_k}^{k}\alpha_j^{(k)}\left[ W(W-I)^{-1}\widetilde{x}^{(j)} + \mathcal{O}(\|\widetilde{x}^{(j)}\|^2)\right]\Big\|^2\Big) \nonumber 
\end{align}
where the second, fourth and sixth equalities follow from local approximation results \eqref{eq:assumption_main2}, and the third and fifth equalities follow from Lipschitz continuity \eqref{eq:assumption_main3}. In addition, $c_1'$ is chosen to be small enough to ensure that the functions $q$, $(q-I)$  and $(q-I)^{-1}$ are approximated in the neighborhood $B(0,c_1)$ such that \eqref{eq:assumption_main2} and \eqref{eq:assumption_main3} hold. 
Note, the constraints $|\alpha_j^{(k)}|\leq C_0$ in \eqref{eq:AA_minimize1} are needed to guarantee the existence of such a $c_1'$.
Recognizing the constant $M>0$ follows by the definition of big-$\mathcal{O}$ completes the argument.

\vspace{0.1in}
\textbf{Proof of  \eqref{eq:adaptproof1}}. By the definition of $\widetilde{y}^{(k+1)}$ in \eqref{eq:nonlinear_reformulation}, it is sufficient to show that $\widehat{y}^{(k+1)}\in \mathcal{S}_k$. Since $\widehat{y}^{(k+1)}$, $\widetilde{x}^{(k)}$, and $\widetilde{x}^{(k-1)}$ are collinear, we have \[
\widehat{y}^{(k+1)}=\mu(\widetilde{x}^{(k)}-\widetilde{x}^{(k-1)})+\widetilde{x}^{(k)},\,\,\text{where $\mu=\frac{\|\widehat{y}^{(k+1)}-\widetilde{x}^{(k)}\|}{\|\widetilde{x}^{(k)}-\widetilde{x}^{(k-1)}\|}$}. 
\]
As a result, if 
\begin{equation}
\frac{\|\widehat{y}^{(k+1)}-\widetilde{x}^{(k)}\|}{\|\widetilde{x}^{(k)}-\widetilde{x}^{(k-1)}\|}<C_0-1,\label{eq:step1}
\end{equation} 
then $\widehat{y}^{(k+1)}\in \mathcal{S}_k$ and \eqref{eq:adaptproof1} is proved. %The rest of this proof aims to establish \eqref{eq:step1} by investigating and providing bounds for $\|\widehat{y}^{(k+1)}-\widetilde{x}^{(k)}\|$ and $\|\widetilde{x}^{(k)}-\widetilde{x}^{(k-1)}\|$.

% {\color{red}(Starting here until till right after (5.44) does not read that well. Could we make it clearer?)}
% Having proved the convergence of AA(m), which means $\|\widetilde{x}^{(k)}\|$ converges to zero, 
% so the same argument {\color{red} to add} as \eqref{eq:tildexk+1} and \eqref{eq:tildexk+12} with $k$ replacing $k+1$ shows that for $k$ so large such that $M\epsilon_k=M(w_0\|W\|)^{-(m+2)}\|\widetilde{x}^{(k)}\|\leq 1-\|W\|$, 

{The remainder of the proof of \eqref{eq:adaptproof1} establishes \eqref{eq:step1} by estimating the numerator and denominator of the  LHS of \eqref{eq:step1}, ${\|\widehat{y}^{(k+1)}-\widetilde{x}^{(k)}\|}$ and ${\|\widetilde{x}^{(k)}-\widetilde{x}^{(k-1)}\|}$, separately. For this estimation, we claim 
\begin{align}\label{eq:tildexk+3}
\|\widetilde{x}^{(k)}\|\leq  \tau_{k+1}\|\widetilde{y}^{(k)}\|,\,\,\text{where}\,\,\,\tau_{k+1}=\frac{\|W\|}{1-\frac{\|W\|\epsilon_{k+1}}{1-\epsilon_{k+1}}}, %\leq  \|\widetilde{y}^{(k)}\|.
\end{align}
and defer its proof to \eqref{eq:tildexk+5}-\eqref{eq:tildexk+6}.

By combining \eqref{eq:tildexk+3} with the observation $\|\widehat{y}^{(k+1)}\|\leq \|\widetilde{x}^{(k)}\|$, which holds since $\widehat{y}^{(k+1)}$ is the projection of the origin onto the line connecting $\widetilde{x}^{(k-1)}$ and $\widetilde{x}^{(k)}$, we obtain 
\begin{align}\label{eq:thm4_num2}
\|\widetilde{x}^{(k)}-\widehat{y}^{(k+1)}\| &\leq \|\widetilde{x}^{(k)}\|+\|\widehat{y}^{(k+1)}\|  \leq 2\|\widetilde{x}^{(k)}\|
\leq 2\tau_{k+1}\|\widetilde{y}^{(k)}\|.
\end{align}
From (P1) we have $\|\widetilde{x}^{(k-1)}\|\geq \|\widetilde{y}^{(k)}\|$, and leveraging \eqref{eq:tildexk+3} again, we see
\begin{align}\label{eq:thm4_den}
\|\widetilde{x}^{(k)}-\widetilde{x}^{(k-1)}\|\geq \|\widetilde{x}^{(k-1)}\|-\|\widetilde{x}^{(k)}\| 
\geq (1-\tau_{k+1})\|\widetilde{y}^{(k)}\|.
\end{align}
\begin{figure}[t]
\begin{center}
\includegraphics[scale=0.27]{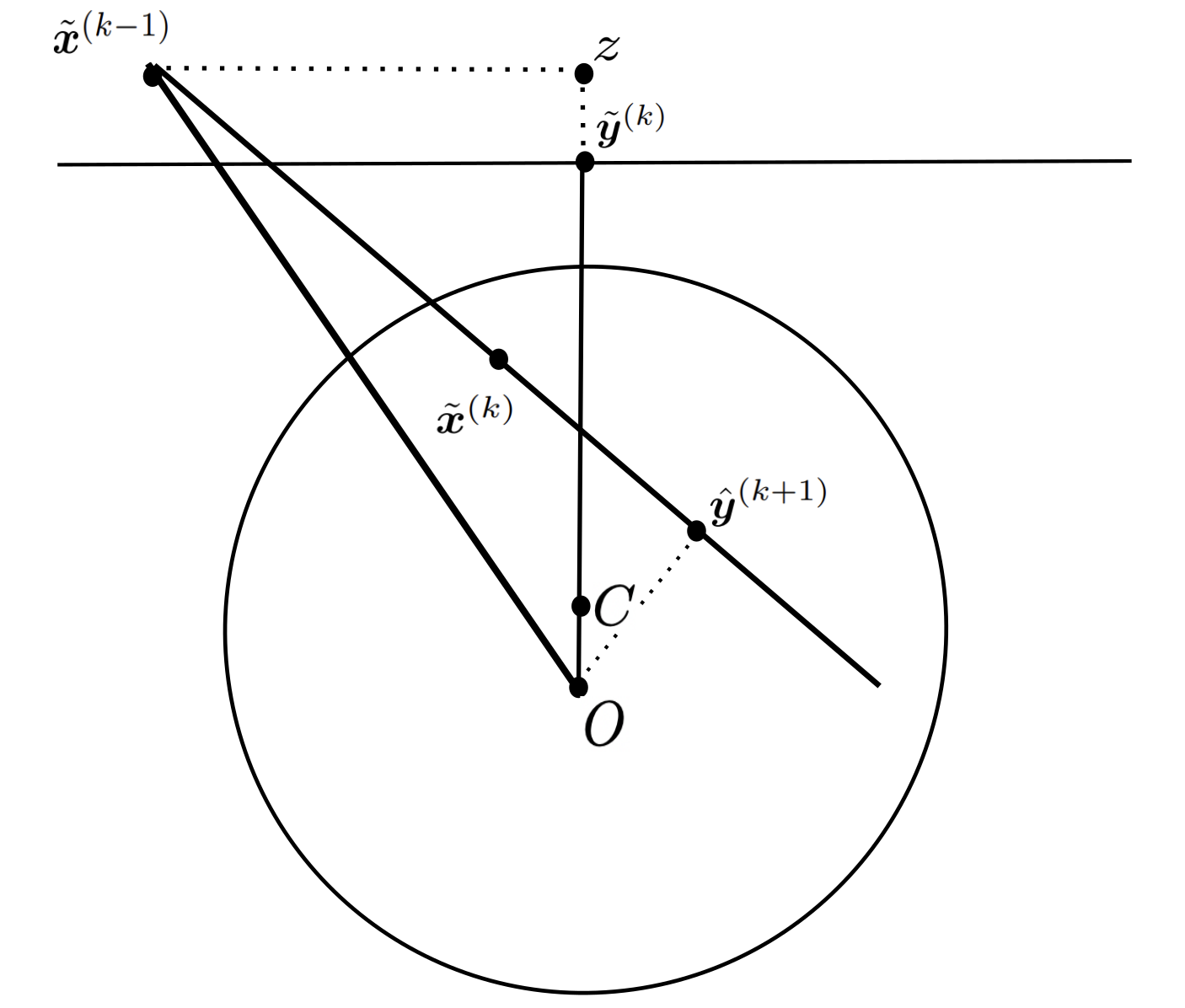}
\caption{A visualization used in Part 3 of the proof of Theorem~\ref{thm:main2}. Compared with Figure~\ref{fig:visualization_proof}, here $\angle(O \widetilde{y}^{(k+1)} \widetilde{x}^{(k)} ) \geq \pi/2$ instead of being equal.  }\label{fig:visualization_proof4}
\end{center}
\end{figure}
Combining \eqref{eq:thm4_num2} and \eqref{eq:thm4_den} to upper bound the LHS of \eqref{eq:step1} we obtain
\[
\frac{\|\widehat{y}^{(k+1)}-\widetilde{x}^{(k)}\|}{\|\widetilde{x}^{(k)}-\widetilde{x}^{(k-1)}\|} \leq \frac{2\tau_{k+1}}{1-\tau_{k+1}}.%\frac{ \frac{2\|W\|}{1-\frac{\|W\|\epsilon_{k+1}}{1-\epsilon_{k+1}}}}{1-\frac{\|W\|}{1-\frac{\|W\|\epsilon_{k+1}}{1-\epsilon_{k+1}}}}.
\]
Note that we have proved the convergence of $\|\widetilde{x}^{(k)}\|$ to zero, we have $\epsilon_k\rightarrow 0$ and $\tau_k\rightarrow \|W\|$, as $k\rightarrow\infty$. 
Thus, when $C_0 > \frac{2\|W\|}{1-\|W\|}+1$, the RHS of the above inequality shall be bounded above by $C_0-1$, for sufficiently large $k$, and \eqref{eq:step1} shall hold.

It remains to prove \eqref{eq:tildexk+3}. Following an  argument similar to  \eqref{eq:tildexk+1} and \eqref{eq:tildexk+12} (with $k$ replacing $k+1$), (P2) and \eqref{eq:ratio3_assumption} imply
\begin{align}\label{eq:tildexk+5}
\|\widetilde{x}^{(k)}\| &\leq \|W\|\|\widetilde{y}^{(k)}\|+M(\max_{k-1-m_{k-1}\leq j\leq k-1}\|\widetilde{x}^{(j)}\|^2) \nonumber \\
&\leq \|W\|\|\widetilde{y}^{(k)}\|+M(w_0\|W\|)^{-(m+2)}\|\widetilde{x}^{(k+1)}\|^2=\|W\|\|\widetilde{y}^{(k)}\|+\epsilon_{k+1}\|\widetilde{x}^{(k+1)}\|. 
\end{align}
In particular, the first inequality in \eqref{eq:tildexk+5} follows from (P2) and the second inequality follows from \eqref{eq:ratio3_assumption}. 
Combining \eqref{eq:tildexk+5} with the upper bound  $\|\widetilde{x}^{(k+1)}\|\leq\frac{\|W\|}{1-\epsilon_{k+1}}\|\widetilde{y}^{(k+1)}\|$ derived from \eqref{eq:tildexk+12} and $\|\widetilde{y}^{(k+1)}\|\leq \|\widetilde{x}^{(k)}\|$ from (P1), we have
\begin{align}\label{eq:tildexk+6}
\|\widetilde{x}^{(k)}\| 
\leq&\|W\|\|\widetilde{y}^{(k)}\|+\frac{\|W\|\epsilon_{k+1}}{1-\epsilon_{k+1}}\|\widetilde{x}^{(k)}\|,
\end{align}
which proves \eqref{eq:tildexk+3}.
}

%As $k$ goes to infinity, $\|\widetilde{x}^{(k)}\|\rightarrow 0$ and $C_0>\frac{2}{1-\|W\|}$ is sufficient.
% have 
% \[
% \widehat{y}^{(k+1)}-\widetilde{x}^{(k)}=c(\widetilde{x}^{(k)}-\widetilde{x}^{(k-1)})
% \]
% with 
% \[
% |c| \leq \frac{2\|\widetilde{y}^{(k)}\|+O(\|\widetilde{y}^{(k)}\|^2)}{(1-\|W\|)\|\widetilde{y}^{(k)}\|-O(\|\widetilde{y}^{(k)}\|^2)}=\frac{2}{1-\|W\|}+o(1),
% \]

\vspace{0.1in}
\textbf{Proof of \eqref{eq:adaptproof2}}.
This proof follows a similar approach to the argument for Theorem \ref{thm:main1}. To extend  Lemma~\ref{lemma:bound} to our nonlinear setting, we apply the arguments of \eqref{eq:tildexk+5} and \eqref{eq:tildexk+6} as follows: {
\begin{align}\label{eq:tildexk+7}
\|\widetilde{x}^{(k)}-W\widetilde{y}^{(k)}\|\leq \epsilon_{k+1}\|\widetilde{x}^{(k+1)}\|\leq \frac{\|W\|\epsilon_{k+1}}{1-\epsilon_{k+1}}\|\widetilde{x}^{(k)}\|.%=\frac{\|W\|\epsilon_{k+1}}{1-\epsilon_{k+1}}\|\widetilde{x}^{(k)}\|.
\end{align}
The equation above implies that  $\|\widetilde{x}^{(k)}\|-\|W\|\|\widetilde{y}^{(k)}\|\leq \frac{\|W\|\epsilon_{k+1}}{1-\epsilon_{k+1}}\|\widetilde{x}^{(k)}\|$. Thus, if $\|W\|+\frac{\|W\|\epsilon_{k+1}}{1-\epsilon_{k+1}}<1$ (a condition holds for large $k$ since $\lim_{k\rightarrow\infty}\epsilon_k=0$), we have  $\|\widetilde{x}^{(k)}\|\leq \|\widetilde{y}^{(k)}\|$ and 
\[
\|\widetilde{x}^{(k)}-W\widetilde{y}^{(k)}\|\\\leq \frac{\|W\|\epsilon_{k+1}}{1-\epsilon_{k+1}}\|\widetilde{y}^{(k)}\|.
\]
}
% \[
% \|\widetilde{x}^{(k)}-W\widetilde{y}^{(k)}\|\leq M(w_0\|W\|)^{-(m+1)}\|\widetilde{x}^{(k)}\|^2\leq (M(w_0\|W\|)^{-(m+1)}\|\widetilde{x}^{(k)}\|)\cdot \|\widetilde{y}^{(k)}\|=\epsilon_k\|\widetilde{y}^{(k)}\|,
% \]
% where the second inequality follows from \eqref{eq:tildexk+3}.
Following an argument similar to  \eqref{eq:radius_argument}, we have
\begin{align}
&\big\|\widetilde{x}^{(k)} - \frac{1}{2}(\lambda_{\max}(W) +\lambda_{\min}(W))\widetilde{y}^{(k)} \big\| \nonumber \\ 
&\hspace{0.5in}\leq \| \widetilde{x}^{(k)}-W\widetilde{y}^{(k)}\| + \| W\widetilde{y}^{(k)} - \frac{1}{2}(\lambda_{\max}(W) +\lambda_{\min}(W))\widetilde{y}^{(k)} \| \nonumber \\
&\hspace{0.5in}\leq \Big( \frac{\|W\|\epsilon_{k+1}}{1-\epsilon_{k+1}}+\frac{1}{2}(\lambda_{\max}(W) -\lambda_{\min}(W))\Big)\|\widetilde{y}^{(k)}\|. \label{eq:radius_argument2}
\end{align}
Thus, $\widetilde{x}^{(k)}$ lies within a ball centered at $\frac{1}{2}(\lambda_{\max}(W) +\lambda_{\min}(W))\widetilde{y}^{(k)}$, as before, but with the larger radius 
$r=\Big( \frac{\|W\|\epsilon_{k+1}}{1-\epsilon_{k+1}}+\frac{1}{2}(\lambda_{\max}(W) -\lambda_{\min}(W))\Big)\|\widetilde{y}^{(k)}\|$.  %in applying the argument in 

A technical issue in adapting  the proof of Theorem \ref{thm:main1} to this setting is that (P1) implies $\angle(O \widetilde{y}^{(k)} \widetilde{x}^{(k-1)} )\geq \pi/2$, whereas the proof of Theorem \ref{thm:main1} relies on the more restrictive condition that $\angle(O \widetilde{y}^{(k)} \widetilde{x}^{(k-1)} )= \pi/2$. 
 We refer the readers to the visualizations in Figure~\ref{fig:visualization_proof}  and  Figure~\ref{fig:visualization_proof4} for a comparison. 
{Applying the same coordinate transformation as in the proof of Theorem~\ref{thm:main1}, we may set $\widetilde{y}^{(k)} = [0,1,0,\cdots,0]$ and $\widetilde{x}^{(k-1)}=[-1/\delta,b,0,\cdots,0]$,} where $b\geq 1$ because   $\angle(O \widetilde{y}^{(k)} \widetilde{x}^{(k-1)} )\geq \pi/2$ in (P1), rather than  $\angle(O \widetilde{y}^{(k)} \widetilde{x}^{(k-1)} )= \pi/2$ in the proof of Theorem \ref{thm:main1}.

To simplify notation, we rewrite the definition of \eqref{eq:w0} as follows:\begin{align}\label{eq:w01}
w_0 &= \sup_{\delta\geq 0}\;\;\sin\!\!\Big[\!
\sin^{-1}\!\!\Big(\phi(\delta)\Big)\! 
+ \theta(\delta)\Big] \\
&=\sup_{\delta\geq 0}\Big[\phi(\delta)\cos\Big(\theta(\delta)\Big)+\sqrt{1-\phi^2(\delta)}\sin\Big(\theta(\delta)\Big)\Big], \nonumber 
\end{align}
where $\phi(\delta)$ and $\theta(\delta)$ are given by 
\begin{align*}
\phi(\delta)&:=\!\frac{|\lambda_{\max}(W)-\lambda_{\min}(W)|\delta}{\sqrt{\!4+\!\big(2\!-\!\lambda_{\max}(\!W\!)\!-\!\lambda_{\min}(W\!)\big)^2\!\delta^2}}\!,\\\theta(\delta)&:=\Big|\tan^{-1}(\delta)-\tan^{-1}\Big(\Big(1-\frac{\lambda_{\max}(W)+\lambda_{\min}(W)}{2}\Big)\delta\Big)\Big|.
\end{align*}
%While \eqref{eq:geometric1} still holds for $b>1$, 
Now, we have the adaption of \eqref{eq:geometric2}-\eqref{eq:geometric3} as follow:  for \begin{equation}
t=1+\frac{2{\|W\|\epsilon_{k+1}}}{(\lambda_{\max}(W)-\lambda_{\min}(W))({1-\epsilon_{k+1}})},\label{eq:t_definition}
\end{equation}
\begin{align}
&\nonumber\sin\angle(C\widetilde{x}^{(k-1)}\widetilde{x}^{(k)})\leq \frac{\Big(2\frac{\|W\|\epsilon_{k+1}}{1-\epsilon_{k+1}}+|\lambda_{\max}(W)-\lambda_{\min}(W)|\Big)\delta}{\sqrt{4+\big(2b-\lambda_{\max}(W)-\lambda_{\min}(W)\big)^2\delta^2}}\\
&\hspace{0.5in}\leq \frac{\Big(2\frac{\|W\|\epsilon_{k+1}}{1-\epsilon_{k+1}}+|\lambda_{\max}(W)-\lambda_{\min}(W)|\Big)\delta}{\sqrt{4+\big(2-\lambda_{\max}(W)-\lambda_{\min}(W)\big)^2\delta^2}}
= t\phi(\delta)\label{eq:geometric5},
\end{align}
where the second inequality follows from $b\geq 1$.

Set $z = [0,b,0,\cdots,0]$, we have the following adaption of  \eqref{eq:geometric4}: 
\begin{align}\nonumber
&\angle(O\widetilde{x}^{(k-1)}C)=|\angle(O\widetilde{x}^{(k-1)}z)-\angle(C\widetilde{x}^{(k-1)}z)|\\
&\hspace{0.2in}=\bigg|\tan^{-1}(b\delta)-\tan^{-1}\Big(\Big(b-\frac{\lambda_{\max}(W)+\lambda_{\min}(W)}{2}\Big)\delta\Big)\bigg|\leq \theta(\delta),
\label{eq:geometric6}
\end{align}
where last inequality of \eqref{eq:geometric6} is argued as follows: since $\frac{d}{d x}\tan^{-1}x=\frac{1}{1+x^2}$
is nonincreasing, for any $\delta_1>\delta_2>0$ and $b>0$, $\tan^{-1}(\delta_1)-\tan^{-1}(\delta_2)\geq \tan^{-1}(b+\delta_1)-\tan^{-1}(b+\delta_2)$. So,  $\bigg|\tan^{-1}(b\delta)-\tan^{-1}\Big(\Big(b-\frac{\lambda_{\max}(W)+\lambda_{\min}(W)}{2}\Big)\delta\Big)\bigg|$ is upper bounded by its value when $b=1$, which equals $\theta(\delta)$. 

Finally, by \eqref{eq:geometric5}, \eqref{eq:geometric6}, the triangle inequality, and $t\geq 1$, we obtain \begin{align*}
\sin\angle(O\widetilde{x}^{(k-1)}\widetilde{x}^{(k)})\leq &\sin(\angle(C\widetilde{x}^{(k-1)}\widetilde{x}^{(k)})+\angle(O\widetilde{x}^{(k-1)}C))\\\leq &  \sin\Big(\sin^{-1}\big(t\phi(\delta)\big)+\theta(\delta)\Big) \\=& t  \phi(\delta) \cos\big(\theta(\delta)\big)\!+\!\sqrt{1\!-\!t^2\phi^2(\delta)}\sin\big(\theta(\delta)\big)\\\leq & t  \phi(\delta) \cos\big(\theta(\delta)\big)+t\sqrt{1\!-\!\phi^2(\delta)}\sin\big(\theta(\delta)\big)\leq t w_0,
\end{align*}
and \eqref{eq:adaptproof2} is established using  $\|\widehat{y}^{(k+1)}\|/\|\widetilde{x}^{(k-1)}\|=\sin\angle(O\widetilde{x}^{(k-1)}\widetilde{x}^{(k)})$ and the definition of $t$ in \eqref{eq:t_definition}.
} 

%================================================================================
\section{Conclusion}\label{sec:conclusion}
This paper studies the convergence properties of the classical windowed AA algorithm and provides the first argument showing it improves the {\color{black}r}-linear convergence factor of the fixed-point iteration when $q$ is linear and symmetric. 
We extend our investigation to nonlinear operators $q$ possessing a symmetric Jacobian at fixed points and prove an adapted version of the method locally enjoys improved {\color{black}r}-linear convergence. 
Our theoretical discoveries were validated through simulations where we showcased AA(m) significantly outperforms the fixed-point iteration for TME. 
In the future, we hope to generalize our results to non-symmetric linear operators and refine the estimations of the $r$-linear convergence factor.

%======================================================================================================================
%======================================================================================================================
\subsubsection*{Acknowledgements}
First, GL thanks Yousef Saad for introducing him to Anderson acceleration. The authors also thank and acknowledge their funding sources as this material is based upon work supported by the National Science Foundation Graduate Research
Fellowship Program under Grant No.~2237827, NSF Award DMS-2152766, and NSF DMS-2318926. 
Any opinions, findings, and conclusions or recommendations expressed in this material are those of the author(s) and do not necessarily reflect the views of the National Science Foundation. 

%======================================================================================================================
%======================================================================================================================
% References


\begin{thebibliography}{}
 \setlength{\itemindent}{-\leftmargin}
 \makeatletter\renewcommand{\@biblabel}[1]{}\makeatother
 
\bibitem[Anderson, 1965]{anderson1965}
Anderson, D.~G. (1965),
\newblock {\em Iterative procedures for nonlinear integral equations},
\newblock {J. ACM}, 12(4), pp. 547–560, \href{https://doi.org/10.1145/321296.321305}{doi.org/10.1145/321296.321305}.


%============================================================================================
\bibitem[Bertrand and Massias, 2021]{pmlr-v130-bertrand21a}
Bertrand, Q. and Massias, M. (2021),
\newblock {\em Anderson acceleration of coordinate descent},
\newblock {in Proceedings of the 24th Int. Conf. on Artificial Intelligence and Statistics}, PMLR 130, pp. 1288-1296. 

%============================================================================================
\bibitem[Bian et~al., 2022]{bian2022anderson}
Bian, W. and Chen, X. (2022),
\newblock {\em Anderson acceleration for nonsmooth fixed point problems},
\newblock {SIAM J. Numer. Anal.}, 
60(5), pp. 2565--2591,
\href{http://dx.doi.org/10.1137/22M1475983}{doi.org/10.1137/22M1475983}.

%============================================================================================
\bibitem[Bian et~al., 2021]{bian2021anderson}
Bian, W., Chen, X., and Kelley, C.T. (2021),
\newblock {\em Anderson acceleration for a class of nonsmooth fixed-point problems},
\newblock {SIAM J. Sci. Comput.}, 
43(5), pp. 51--520,
\href{http://dx.doi.org/10.1137/20M132938X}{doi.org/10.1137/20M132938X}.

%============================================================================================
\bibitem[Both et~al., 2019]{Both2019}
Both, J. W., Kumar, K., Nordbotten, J. M., and Radu, F. A. (2019),
\newblock {\em Anderson accelerated fixed-stress splitting schemes for consolidation of unsaturated porous media},
\newblock {Comput. Math. Appl.},
77(6), pp. 1479--1502, \href{https://doi.org/10.1016/j.camwa.2018.07.033}{doi.org/10.1016/j.camwa.2018.07.033}.

%============================================================================================
\bibitem[Brezinski et~al., 2019]{brezinski2022shanks}
Brezinski, C., Cipolla, S., Redivo-Zaglia, M., and Saad, Y. (2022),
\newblock {\em Shanks and {A}nderson-type acceleration techniques for systems of nonlinear equations},
\newblock {IMA J. Numer. Anal.},
42(4), pp. 3058--3093, \href{https://doi.org/10.1093/imanum/drab061}{doi.org/10.1093/imanum/drab061}.

%============================================================================================
\bibitem[Chen et~al., 2011]{chen+2011}
Chen, Y., Wiesel, A., and Hero, A.~O. (2011),
\newblock {\em Robust shrinkage estimation of high-dimensional covariance matrices},
\newblock {IEEE Trans. Signal Process.}, 59(9), pp. 4097--4107. 

%============================================================================================
\bibitem[De~Sterck and He, 2022]{doi:10.1137/21M1449579}
De~Sterck, H. and He, Y. (2022),
\newblock {\em Linear asymptotic convergence of Anderson acceleration: fixed-point analysis},
\newblock {SIAM J. Numer. Anal.},
  43(4), pp. 1755--1783, \href{https://doi.org/10.1137/21M1449579}{doi.org/10.1137/21M1449579}.

%============================================================================================
\bibitem[De~Sterck et~al., 2024]{de2024anderson}
De Sterck, H., He, Y., and Krzysik, O.A. (2024),
\newblock {\em Anderson acceleration as a {K}rylov method with application to convergence analysis},
\newblock {J. Sci. Comput.},
  99(1), \href{https://doi.org/10.1007/s10915-024-02464-x}{doi.org/10.1007/s10915-024-02464-x}.

%============================================================================================
\bibitem[De~Sterck et~al., 2024]{DeSterck2024}
De~Sterck. H., Krzysik O.~A., and Smith, A. (2024),
\newblock {\em On the convergence of restarted Anderson acceleration for symmetric linear systems},
\newblock {arXiv preprint arXiv:2312.04776}.

%============================================================================================
\bibitem[d’Aspremont et~al., 2021]{OPT-036}
d’Aspremont, A.,  Scieur, D., and Taylor, A. (2021),
\newblock {\em Acceleration Methods},
\newblock {Found. Trends in Opt.},
5(1-2), pp. 1-245, \href{http://dx.doi.org/10.1561/2400000036}{doi.org/10.1561/2400000036}.

%============================================================================================
\bibitem[Evans et~al., 2020]{Evans2020}
Evans, C., Pollock, S., Rebholz, L.~G., and Xiao, M. (2020),
\newblock {\em A proof that Anderson acceleration improves the convergence rate in
  linearly converging fixed-point methods (but not in those converging
  quadratically)},
\newblock {SIAM J. Numer. Anal.}, 58(1), pp. 788--810, \href{https://doi.org/10.1137/19M1245384}{doi.org/10.1137/19M1245384}.

%============================================================================================
\bibitem[Fang and Saad, 2009]{fang2009two}
Fang, H.-r. and Saad, Y. (2009),
\newblock {\em Two classes of multisecant methods for nonlinear acceleration},
\newblock {Numer. Linear Algebra Appl.}, 16(3), pp. 197--221, \href{ https://doi.org/10.1002/nla.617}{doi.org/10.1002/nla.617}.

%============================================================================================
\bibitem[Frahm and Jaekel, 2007]{FrahmJaekel2007}
Frahm, G. and Jaekel, U. (2007).
\newblock {\em {T}yler's {M}-estimator, random matrix theory, and generalized elliptical
  distributions with applications to finance},
\newblock Discussion Papers in Econometrics and Statistics, No.~2/07,
University of Cologne, Institute of Econometrics and Statistics.

%============================================================================================
\bibitem[Frahm and Jaekel, 2010]{frahm2}
Frahm, G. and Jaekel, U. (2010),
\newblock {\em A generalization of {T}yler's {M}-estimators to the case of incomplete data},
\newblock {Comput. Statist. Data Anal.}, 
54(2), pp. 374--393, \href{https://doi.org/10.1016/j.csda.2009.08.019}{doi.org/10.1016/j.csda.2009.08.019}.

%============================================================================================
\bibitem[Franks and Moitra, 2020]{FranksMoitra2020}
Franks, W.~C. and Moitra, A. (2020),
\newblock {\em Rigorous guarantees for {T}yler’s {M}-estimator via quantum expansion},
\newblock {in Proceedings of Thirty-Third Conference on Learning Theory}, PMLR, 125, pp. 1601-1632.

%============================================================================================
\bibitem[Geist and Scherrer, 2018]{geist2018anderson}
Geist, M. and Scherrer, B. (2018),
\newblock {\em Anderson acceleration for reinforcement learning},
\newblock {arXiv preprint arXiv:1809.09501}.

%============================================================================================
\bibitem[Goes et~al., 2020]{goes2020robust}
Goes, J., Lerman, G., and Nadler, B. (2020).
\newblock {\em Robust sparse covariance estimation by thresholding {T}yler’s {M}-estimator},
\newblock {Ann. Statist.}, 48(1), pp. 86-100, \href{https://doi.org/10.1214/18-AOS1793}{doi.org/10.1214/18-AOS1793}.

%============================================================================================
\bibitem[Kelley, 2018]{kelley2018numerical}
Kelley, C.~T. (2018),
\newblock {\em Numerical methods for nonlinear equations},
\newblock {Acta Numer.}, 27(1), pp. 207-287, \href{https://doi.org/10.1017/S0962492917000113}{doi.org/10.1017/S0962492917000113}.

%============================================================================================
\bibitem[Kent and Tyler, 1988]{KentTyler1988}
Kent, J.~T. and Tyler, D.~E. (1988),
\newblock {\em Maximum likelihood estimation for the wrapped {C}auchy distribution},
\newblock {J. Appl. Stat.}, 15(2), pp. 247–254, \href{https://doi.org/10.1080/02664768800000029}{doi.org/10.1080/02664768800000029}.

%============================================================================================
\bibitem[Ollila and Koivunen, 2003]{OllilaKoivunen2003}
Ollila, E. and Koivunen, V. (2003),
\newblock {\em Robust antenna array processing using M-estimators of pseudo-covariance},
\newblock {in Proceedings of the 14th {IEEE} Symposium on Personal, Indoor and Mobile Radio Communications ({PIMRC})}, 3, pp. 2659--2663.

%============================================================================================
\bibitem[Ollila and Tyler, 2012]{OllilaTyler2012}
Ollila, E. and Tyler, D.~E. (2012),
\newblock {\em Distribution-free detection under complex elliptically symmetric clutter distribution},
\newblock{ in Proceedings of the 7th {IEEE} Sensor Array and Multichannel Signal Processing
  Workshop, ({SAM})}, pp. 413--416.

%============================================================================================
\bibitem[Picard, 1890]{picard1890memoire}
Picard, {\'E}. (1890),
\newblock {\em Memoire sur la theorie des equations aux derivees partielles et la
  methode des approximations successives},
\newblock {Journal de Math{\'e}matiques pures et appliqu{\'e}es},
  6, pp. 145-210.

%============================================================================================
\bibitem[Pollock and Rebholz, 2021]{pollock2021anderson}
Pollock, S. and Rebholz, L.~G. (2021),
\newblock {\em Anderson acceleration for contractive and noncontractive operators},
\newblock {IMA J. Numer. Anal.}, 41(4), pp. 2841-2872, \href{https://doi.org/10.1093/imanum/draa095}{doi.org/10.1093/imanum/draa095}.

%============================================================================================
\bibitem[Pollock et~al., 2019]{pollock2019anderson}
Pollock, S., Rebholz, L.~G., and Xiao, M. (2019),
\newblock {\em Anderson-accelerated convergence of {P}icard iterations for
  incompressible {N}avier--{S}tokes equations}.
\newblock {SIAM J. Numer. Anal.}, 57(2), pp. 615-637, \href{https://doi.org/10.1137/18M1206151}{doi.org/10.1137/18M1206151}.

%============================================================================================
\bibitem[Potra and Engler, 2013]{potra2013characterization}
Potra, F.~A. and Engler, H. (2013),
\newblock{\em A characterization of the behavior of the Anderson acceleration on linear problems}, {Linear Algebra Appl.}, 438(3), pp. 1002-1011, \href{https://doi.org/10.1016/j.laa.2012.09.008}{doi.org/10.1016/j.laa.2012.09.008}.

%============================================================================================
\bibitem[Pulay, 1980]{pulay1980convergence}
Pulay, P. (1980).
\newblock {\em Convergence acceleration of iterative sequences. the case of scf iteration},
\newblock {Chem. Phys. Lett.}, 73(2), pp. 393--398, \href{https://doi.org/10.1016/0009-2614(80)80396-4}{doi.org/10.1016/0009-2614(80)80396-4}.

%============================================================================================
\bibitem[Rebholz and Xiao, 2023]{rebholz2023effect}
Rebholz, L.~G. and Xiao, M. (2023),
\newblock {\em The effect of {A}nderson acceleration on superlinear and sublinear convergence},
\newblock {J. Sci. Comput.}, 96(2), \href{https://doi.org/10.1007/s10915-023-02262-x}{doi.org/10.1007/s10915-023-02262-x}.

%============================================================================================
\bibitem[Scieur, 2019]{scieur2019generalized}
Scieur, D. (2019),
\newblock{\em Generalized framework for nonlinear acceleration}. arXiv preprint arXiv:1903.08764.

%============================================================================================
\bibitem[Scieur et~al., 2016]{NIPS2016_bbf94b34}
Scieur, D., d\textquotesingle Aspremont, A., and Bach, F. (2016),
\newblock {\em Regularized nonlinear acceleration},
\newblock {in Advances in Neural Information Processing Systems}, 29. pp. 3-43.

%============================================================================================
\bibitem[Shi et~al., 2019]{shi2019regularized}
Shi, W., Song, S., Wu, H., Hsu, Y.-C., Wu, C., and Huang, G. (2019),
\newblock {\em Regularized {A}nderson acceleration for off-policy deep reinforcement
  learning},
\newblock {in Advances in Neural Information Processing Systems}, 32.

%============================================================================================
\bibitem[Sun et~al., 2021]{sun2021damped}
Sun, K., Wang, Y., Liu, Y., Pan, B., Jui, S., Jiang, B., Kong, L., et~al.
  (2021),
\newblock {\em Damped {A}nderson mixing for deep reinforcement learning: acceleration, convergence, and stabilization},
\newblock {in Advances in Neural Information Processing Systems}, 34, pp. 3732-3743.

%============================================================================================
\bibitem[Toth and Kelley, 2015]{Toth2015}
Toth, A. and Kelley, C.~T. (2015),
\newblock {\em Convergence analysis for {A}nderson acceleration},
\newblock {SIAM J. Numer. Anal.}, 53(2), pp. 805--819, \href{https://doi.org/10.1137/130919398}{doi.org/10.1137/130919398}.

%============================================================================================
\bibitem[Tyler, 1987a]{Tyler1987}
Tyler, D.~E. (1987a),
\newblock {\em A distribution-free {M}-estimator of multivariate scatter},
\newblock {Ann. Statist.}, 15(1), pp. 234--251.

%============================================================================================
\bibitem[Tyler, 1987b]{tyler1987statistical}
Tyler, D.~E. (1987b),
\newblock {\em Statistical analysis for the angular central {G}aussian distribution on the sphere},
\newblock {\em Biometrika}, 74(3), pp. 579--589.
%
%%============================================================================================
%\bibitem[Saad and Schultz, 1986]{saad1986}
%Saad, J. and Schultz, M.H. (1986),
%\newblock {\em {GMRES}: A generalized minimal residual algorithm for solving nonsymmetric linear systems}, {SIAM J. Sci. Stat. Comput.}, 7(3), pp. 856-869, \href{https://doi.org/10.1137/0907058}{doi.org/10.1137/0907058}.
%
%============================================================================================
\bibitem[Walker and Ni, 2011]{walker2011anderson}
Walker, H.~F. and Ni, P. (2011),
\newblock {\em Anderson acceleration for fixed-point iterations},
\newblock {SIAM J. Numer. Anal.}, 49(4), pp. 1715--1735, \href{https://doi.org/10.1137/10078356X}{doi.org/10.1137/10078356X}.

%============================================================================================
\bibitem[Zhang et~al., 2016]{zhang2020globally}
Zhang, J., O'Donoghue, B., and Stephen B. (2020),
\newblock {\em Globally convergent type-{I} {A}nderson acceleration for nonsmooth fixed-point iterations},
\newblock {SIAM J. Optim.}, 30(4), pp. 3170--3197, \href{https://doi.org/10.1137/18M1232772}{doi.org/10.1137/18M1232772}.

%============================================================================================
\bibitem[Zhang et~al., 2016]{DBLP:journals/ma/ZhangCS16}
Zhang, T., Cheng, X., and Singer, A. (2016),
\newblock {\em Mar{\v{c}}enko-{P}astur law for {T}yler's {M}-estimator},
\newblock {J. Multivar. Anal.}, 149, pp. 114--123, \href{https://doi.org/10.1016/j.jmva.2016.03.010}{doi.org/10.1016/j.jmva.2016.03.010}.


\end{thebibliography}
\end{document}